\theoremstyle{plain}
\newtheorem{thm}{Theorem}[section]
\theoremstyle{plain}
\newtheorem{lem}[thm]{Lemma}
\newtheorem{cor}[thm]{Corollary}
\theoremstyle{definition}
\newtheorem{defi}{Definition}[section]
\newtheorem*{rem}{Remark}
\newenvironment{Assumptions}
{%
\setcounter{enumi}{0}

\begin{enumerate}}%
{\end{enumerate} }
\numberwithin{equation}{section} \allowdisplaybreaks
\title[Stochastic Differential Games]{On zero-sum Stochastic Differential Games with Jump-Diffusion driven state: A viscosity solution framework}
\date{}
\author[Imran H. Biswas]{Imran H. Biswas}
\address[Imran H. Biswas]{\newline
 Centre for Applicable Mathematics ,
 Tata Instiute of Fundamental Research,
  P.O.\ Box 6503, GKVK Post Office,
  Bangalore 560065, India}
\email[]{imran@math.tifrbng.res.in}
\subjclass[2000]{45K05, 46S50, 49L20, 49L25, 91A23, 93E20}
\keywords{Stochastic differential games, L\'{e}vy processes, dynamic programming,
 integro-partial differential equation, viscosity solutions.
}
\thanks{The author would like to thank  K.~H.~Karlsen and B. Oksendal for their comments and suggestions. We are also immensely thankful to the reviewers for pointing out several inconsistencies and helping to resolve them in the course of the revision.}
\begin{document}
\begin{abstract}
A zero-sum differential game with controlled jump-diffusion driven state is considered, and studied 
using a combination of dynamic programming and viscosity solution techniques. We prove, under certain conditions, that
the value of the game exists and is the unique viscosity solution of a fully nonlinear
integro-partial differential equation. In addition, we formulate and prove a verification theorem for such games within the
viscosity solution framework for nonlocal equations.
\end{abstract}

\maketitle

\section{Introduction}
In this article we analyze a two-player zero-sum stochastic differential game (SDG henceforth)
where the state is governed by controlled jump-diffusions. For problems related to controlled degenerate
diffusions, viscosity solution setup provides an appropriate framework for analysis. We mention \cite{Fleming:1989cg, Fleming:1993dy,Gozzi:2005bd,Yong:2005fg}
to name a few of the available studies addressing this connection. In one such article \cite{Fleming:1989cg}, the authors used a combination of viscosity solution and dynamic programming techniques to pioneer a comprehensive study of zero-sum SDG. We  extend these results to jump-diffusion driven games and in addition, we use  viscosity solution framework for nonlocal equations to formulate and prove a verification theorem, which is influenced by similar results in \cite{Gozzi:2005bd} related to optimal control problems for diffusions. 

For a fixed positive constant $T$ and $t\in [0,T)$,  let   $\big(\Omega_t,\mathcal{F}_t,P_t,
     \mathcal{F}_{t,\cdot}\big)$ be a filtered probability space satisfying usual hypotheses.  The SDG consists of the following controlled stochastic dynamics,
       defined on $\big(\Omega_t,\mathcal{F}_t,P_t,
     \mathcal{F}_{t,\cdot}\big)$,
\begin{align}\label{eq:SDG_dymcs}
 dX(s)
= &b(s, X(s); Y(s), Z(s))ds+\sigma(s,X(s);Y(s),
Z(s))dW(s)\\\nonumber&+\int_{\mathbb{E}}\eta(s, X(s^-);Y(s),
Z(s);w)\tilde{N}(ds,dw)
\end{align} where $s\in (t,T]$; with the initial condition
\begin{align*}
 X(t) =x~~~(\in\mathbb{R}^d),
\end{align*} and the pay-off functional
 \begin{align}\label{Pay-off}
 J(t, x; Y, Z)= E^{t,x}\Big[\int_{t}^{T}f(s,X(s);Y(s),Z(s)) ds + g\big(X(T)\big)\Big].
 \end{align} The $\sigma$-algebra $\mathcal{F}_t$ consists of subsets of $\Omega_t$, $P_t$ is a probability measure on $\big(\Omega_t, \mathcal{F}_t \big)$, and $\mathcal{F}_{t,\cdot}$ is the shorthand notation for a filtration $\big(\mathcal{F}_{t,s}\big)_{t\le s\le T}$. Furthermore,  $\mathbb{E}=\mathbb{R}^m\backslash\{0\}$ for a positive integer $m$ and  $W(s)$ is a $k$-dimensional Brownian
  motion on the same probability space. $\sigma$'s are $d\times k$ matrices, $b$'s and $\eta$'s are $\mathbb{R}^d$
 valued functions. $N(ds, dw)$ is a Poisson random measure on $\mathbb{E}$ with intensity measure
  $\nu(dw)$ and $\tilde{N}(ds,dw)= N(ds, dw)- \nu(dw)ds$; $Y(\cdot)$ and  $Z(\cdot)$ are two predictable control processes with
   values in  $\mathcal{Y}$
  and $\mathcal{Z}$ respectively. The sets $\mathcal{Y}$ and $\mathcal{Z}$ are two compact metric spaces, respectively representing the control sets of the two players $ I$ and $II$. $E^{t,x}[\cdot]$ means
   the expected value
 of the quantity inside the brackets, and $t,x$ at the superscript signifies that the state process $X(s)$ starts at
 time $t$ from the point $x$. 
The precise assumptions on $\sigma, b, \eta, f, g$ will be stated later but roughly
 speaking, these are Lipschitz continuous in the state-variable and the possibly singular (at origin) Radon measure
 $\nu$, the so-called L\'{e}vy measure, satisfies usual growth restriction. In the zero-sum scenario we
    conventionalize player $I$ to be the minimizing player and player $II$ to be the maximizing player.
     Before the players could start playing the game additional set of rules have to be specified, which we
     describe in the next section.
     Given that the players agree on this set of rules, a value of the game could then be defined.

  In the deterministic case $(\sigma=\eta = 0)$, it is shown 
    under Isaacs condition that the value
      of the game exists and it is the unique viscosity solution of the underlying Bellman-Isaacs equation ( see \cite{Barron;1987df,Evans:1984fv,Souganidis:1985sw}).
       In the
      stochastic case with $\eta = 0$ (pure diffusion), the study of
       SDGs prior to \cite{Fleming:1989cg} was mainly restricted to problems where state is governed by non-degenerate
       diffusions and value functions were linked to classical solutions of second order equations (cf.
        \cite{Bensossan;1974gw,Freidman:1974ga}).
       In \cite{Fleming:1989cg},\linebreak Fleming $\&$ Souganidis adressed the degenerate case and proved under
       similar Isaacs condition that the value of the game exists and is the unique viscosity solution of the
       underlying Isaacs equation which is fully nonlinear, second order, and possibly degenerate.

In recent years, the use of jump-diffusions to realistically model price dynamics in a financial market is becoming increasingly popular
(see \cite{Cont:2004gk} and references therein). In such market models, the activities of market participants could often be formulated as differential games, where the states follow controlled jump-diffusions . We refer to \cite{Oksendal:2007gf, Oksendal:2008} for more in this direction.  
These recent developments mainly focus on proving verification theorems in terms smooth solutions of underlying Bellman-Isaacs equations. With this attempt here, we extend the methodology of \cite{Fleming:1989cg} to look beyond the smooth
         solution setup and provide a rigorous and robust analysis for zero-sum SDGs related to controlled jump-diffusions.

From an intuitive point of view, Bellman-Isaacs equation for the SDG \eqref{eq:SDG_dymcs}-\eqref{Pay-off}
     is a fully nonlinear integro-partial differential equation of the type
         \begin{align}
         \label{eq:bellman_eq_1} u_t+ F(t,x,Du(t,x), D^2u(t,x),u(t,\cdot) )=0\quad \text{in}~[0,T) \times\mathbb{R}^d,
         \end{align}along with the terminal condition
         \begin{align}
           \label{eq:term_cond} u(T,x)= g(x)\quad x\in\mathbb{R}^d.
         \end{align}The term $u(t, \cdot)$ is of special importance to the present article as it represents the nonlocal-ness of the equation, which results directly from the jumps in the dynamics \eqref{eq:SDG_dymcs}. In our context, the equation \eqref{eq:bellman_eq_1} would primarily assume the two following forms
          \begin{align}
       \label{eq:upp-IPDE}  u_t+ H^-(t,x,Du, D^2u,u(t,\cdot) )=0 \quad \text{in}~[0,T)\times\mathbb{R}^d,\\
\nonumber\\
        \label{eq:low-IPDE} u_t+H^+(t,x, Du, D^2u,u(t,\cdot)) =0 \quad \text{in}~[0,T)\times\mathbb{R}^d,
       \end{align} where,
         for $(q,x,t, A)\in
\mathbb{R}^d\times\mathbb{R}^d\times[0,T]\times\mathbb{S}^d$ and a
smooth function $\varphi$,
\begin{align*}
H^-(t,x,q,A,\varphi(t,\cdot))=&\sup_{z\in\mathcal{Z}}\inf_{y\in\mathcal{Y}}\Big[
\mathcal{L}(t,x, q, A; y, z)+\mathcal{J}(t,x;y,z)\varphi\Big]\\
H^+(t,x,q,A,\varphi(t,\cdot)):=&\inf_{y\in\mathcal{Y}}\sup_{z\in\mathcal{Z}}\Big[\mathcal{L}(s,x, q, A; y, z)+\mathcal{J}(t,x;y,z)\varphi\Big]
\end{align*}and
\begin{align*}
\mathcal{L}(t,x,q,A; y,z):=&\text{Tr}(a(t,x;y,z).A)+b(t,x;y,z).p+f(t,x;y,z),\\
  \mathcal{J}(t,x;y,z)\varphi := & \int_{\mathbb{E}}\Big(\varphi(t,x+\eta(t,x,;y,z;w))-\varphi(t,x)\\
  &\hspace{2cm}-\eta(t,x;y,z;w).D\varphi(t,x)
  \Big)\nu(dw)
\end{align*} with $a =\frac 12\sigma \sigma^T$ and $\mathbb{S}^d$ is the set of all symmetric $d\times d$ matrices.

The fully nonlinear integro-PDEs of type \eqref{eq:upp-IPDE} or \eqref{eq:low-IPDE}
     are called degenerate for the following reasons. The matrices $a$'s are assumed to be
merely nonnegative definite and may vanish at some points.
Similarly, the jump vectors $\eta$'s may as well vanish.
Consequently, there are no regularizing effects in these equations coming
from the second order operator (``Laplacian smoothing'') or from the
integral operator (``fractional Laplacian smoothing''). As a result, the equations \eqref{eq:bellman_eq_1}-\eqref{eq:term_cond} will in general not have classical solutions, and
a suitable notion of viscosity solutions is needed. In the past few years, there have been some
efforts to extend the theory of viscosity solution to the
integro-partial differential equations \cite{Alvarez:1996lq,Barles:1997xj,BCI:P07,BI:P07,Jakobsen:2005jy,Jakobsen:2006aa}. This theory is not as developed as its (pure) PDE counterpart, but the available results suffice to ensure the existence,
uniqueness, comparison principles, and some regularity estimates. Next, we mention a few points describing the technical differences in our problem with the existing literature.

In \cite{Fleming:1989cg} the game problem is defined on the Wiener space $C_0\big([t,T]:\mathbb{R}^d\big)$ and
 the structural richness of this space plays a very crucial role in the analysis.
When the stochastic evolutions are driven by L\'{e}vy processes, the underlying sample space is required to reflect that. In our view, the so-called Wiener-Poisson space would be a proper choice for the underlying probability space space. Part of the subtlety for our analysis of the SDG lies in justifying necessary technical assertions related to the sample space.

  Classically, verification theorems (see \cite{Fleming:1993dy} for $\nu =0$ and
     \cite{Oksendal:2005,Oksendal:2007gf} for jump-diffusions) are formulated in terms
     of smooth solutions of the underlying Bellman-Isaacs equation and set up criterion for a set of
      controls for the players to be optimal. But, as has already been pointed out, the Bellman-Isaacs
      equation does not have classical solutions in general, solutions have to be interpreted in the viscosity sense. Therefore, a verification criterion in terms of viscosity
       solutions will have wider applicability. A verification theorem in the framework viscosity solutions for
       first order Bellman equation first appeared in \cite{Zhou:1993yn}. In the context of pure diffusions, a similar
        result appeared in \cite{Zhou:1997cn} for optimal control problems, but with some technical inconsistencies,
        which was later corrected in \cite{Gozzi:2005bd,Gozzi:2010bd}. We follow the ideas from \cite{Gozzi:2005bd,Zhou:1997cn}, and
         formulate a nonlocal version of this result  for SDGs. Even when the jumps are absent, we point out that
          such a verification theorem is new for differential games. We also mention that discontinuities of the sample paths and non-locality
         of the Isaacs equation makes the problem more involved, and new techniques are employed to overcome
          the added difficulties.

          The rest of the paper is organized as follows: in Section 2 we state the full set of assumptions,
        relevant technical details and state the main results. Sections 3 $\&$ 4 respectively contains the proof of dynamic programming principle and verification theorem.

\section{Technical framework and the statements of the main results} 
We use
the notations $Q_T$ and $\bar{Q}_T$ respectively
for $[0,T)\times\mathbb{R}^d$ and $[0,T]\times \mathbb{R}^d$.
For various constants  depending on the data we
mainly use $N,K,C$ with/without subscripts. For a bounded Lipschitz continuous function $h(x)$
defined on $\mathbb{R}^d$, its Lipschitz norm $|h|_1$ is defined as
\begin{align*}
   |h|_1 := \sup_{x\in\mathbb{R}^d} |h(x)|+\sup_{x,y\in
   \mathbb{R}^d}\frac{|h(x)-h(y)|}{|x-y|}.
\end{align*}We denote the space of all $h$ so that $|h|_1< \infty$
by $W^{1,\infty}(\mathbb{R}^d)$ or sometimes only by $W^{1,\infty}$. We also define
\begin{align*}
   C_b^{\frac{1}{2},1}(\bar{Q}_T):= \big\{h(t,x):\sup_{(t,x)\in \bar{Q}_T}|h(t,x)|
   +\sup_{(t,x),(s,y)\in \bar{Q}_T}\frac{|h(t,x)-h(s,y)|}{|t-s|^{\frac
   12}+|x-y|}<\infty\big\}.
\end{align*} 
Furthermore, $|h(t,\cdot)|_1$ simply stands for $|\cdot|_1$  norm
of $h(t,x)$ as a function of $x$ alone with $t$ being fixed. Let $C^{1,2}(Q_T)$
be the space of `once in time' and `twice in space' continuously differentiable functions.
We denote the set of all upper and lower semicontinuous functions on $\bar{Q}_T$
respectively by $USC(\bar{Q}_T)$ and $LSC(\bar{Q}_T)$.
A subscript would mean polynomial growth at infinity, therefore
the spaces $USC_p(\bar{Q}_T) $, $LSC_p(\bar{Q}_T), C_p^{1,2}(Q_T)$
contain the functions $h$ respectively from
$USC(\bar{Q}_T), LSC(\bar{Q}_T)$, $ C^{1,2}(Q_T)$
satisfying the growth condition
\begin{align*}
     |h(x)|\le C(1+|x|^p)\quad \text{for all}
     \quad x\in\mathbb{R}^d~(\text{uniformly in} ~t~\text{if}~h~\text{depends on}~ t ).
\end{align*}
We identify the spaces $USC_0(\bar{Q}_T)$ and $LSC_0(\bar{Q}_T)$ respectively
with $USC_b(\bar{Q}_T)$ and $LSC_b(\bar{Q}_T)$; the subscript `$b$'
signifies boundedness.

 Now we list the precise set of assumptions.
 \begin{Assumptions}
  \item\label{A1} The spaces $\mathcal{Y}$~ and $\mathcal{Z}$ are compact metric spaces; the functions $\sigma, f, b$ and $\eta$ are
   continuous both on $\mathcal{Y}$ ~and $\mathcal{Z}$, uniformly with respect to $(t,x)\in [0,T]\times\mathbb{R}^d$
   and additionally with respect to $ w \in\mathbb{R}^m$ for $\eta$.
   \item\label{A2} $f,b,\sigma, \eta$ are bounded and continuous with respect to $t$ (and $w$ for $\eta$), uniformly in other entries, and there exists a positive constant $K$ such that
       \begin{align*}
        \big(|f|_1+|b|_1+|\sigma|_1\big)(t,\cdot;y,z)+|g|_1\le K,
       \end{align*}uniformly in $(t; y,z; w)$ and
       $$|\eta(t,\cdot; y,z;w)|+|\eta(t,\cdot; y,z;w)|_1\le K\min(|w|,1).$$
   \item\label{A3} In concurrence with \ref{A2}, in this case the L\'{e}vy measure $\nu$ is a positive Radon measure on $\mathbb{E}$ and satisfies 
   \begin{align}
     \int_{\mathbb{E}}\min(|w|^2, 1)\nu(dw) < \infty.
    \end{align}
   \end{Assumptions}
\begin{rem} The assumptions \ref{A1}-\ref{A3} are natural except for the boundedness constraint. However, it is possible to allow certain growth properties and the results of this paper are still valid in a properly modified form. The jump vectors $\eta$ can also enjoy some polynomial growth at infinity in $w$, in which case the L\'{e}vy measure has to have appropriate decay property at infinity.
   \end{rem}
\subsection{Viscosity Solutions for Integro-PDEs} The notion of viscosity solution for nonlocal equations (  such as \eqref{eq:bellman_eq_1}) could be defined in various ways (e.g.  \cite{Arisawa:208za,Jakobsen:2005jy}), but it is not very hard to establish the equivalence. We use the following definition from \cite{Jakobsen:2005jy}.
\begin{defi}\label{defi:visc}For $F=H^+ $ or $ H^-$, 
$v\in USC_p(\bar{Q}_T)(v\in LSC_p(\bar{Q}_T))$ is a viscosity {\em subsolution (supersolution)} of
 \eqref{eq:bellman_eq_1} if for every $(t,x)\in Q_T$ and $\phi\in C_p^{1,2}(Q_T)$ such that $(t,x)$ is a global maximum
 (global minimum) of $v-\phi$,
\[\phi_t +F(t,x,D\phi,D^2\phi,\phi(t,\cdot) )\ge 0 (\le 0).\]
We say that $v$ is a \emph{viscosity solution} of \eqref{eq:bellman_eq_1} if $v$
is both a sub- and supersolution of \eqref{eq:bellman_eq_1}.
\end{defi} The following wellposedness theorem holds, a proof of which can be found in \cite{Jakobsen:2005jy}.
 \begin{thm}\label{thm:wellposed}
 Assume \ref{A1}, \ref{A2} and \ref{A3}. Then, for $F = H^+$ or $F=H^-$, there exists unique viscosity
 solution $u$ to the  terminal value problem
 \eqref{eq:bellman_eq_1}-\eqref{eq:term_cond} and a
 constant $N$ depending only on $d, K, T$ such that
\begin{align}
\label{eq:wellposed}
|v|_{\frac 12, 1}\le N.
\end{align}
Furthermore, a comparison principle holds: If $u$ and $\bar u$ are
bounded sub- and supersolutions of \eqref{eq:bellman_eq_1}-\eqref{eq:term_cond} with $F= H^+$ or $H^-$ and $u(T,\cdot)\le \bar{u}(T,\cdot)$, then $u\leq \bar u$ in $\bar Q_T$.
\end{thm}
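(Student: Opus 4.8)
The plan is to establish the three assertions of the theorem --- uniqueness, existence, and the regularity bound \eqref{eq:wellposed} --- in that logical order, following the nonlocal viscosity framework of \cite{Jakobsen:2005jy}. Uniqueness will be a direct consequence of the comparison principle, so I would prove comparison first; existence I would then obtain by Perron's method once comparison and a pair of ordered barriers are available; and the regularity estimate I would derive at the end by comparing the solution with its own spatial translates and with explicitly constructed time barriers.

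For the comparison principle, take a bounded subsolution $u$ and a bounded supersolution $\bar u$ with $u(T,\cdot)\le\bar u(T,\cdot)$, and suppose for contradiction that $\sup_{\bar Q_T}(u-\bar u)>0$. After the usual perturbation in time (adding a small multiple of $(T-t)$ to turn one of them into a strict sub/supersolution and to push the maximum away from $t=T$), I would double the variables in space and time with the penalty $\tfrac{1}{2\varepsilon}\big(|x-y|^2+|t-s|^2\big)$ together with a mild localizing term $\beta|x|^2$; since $u,\bar u$ are bounded, the penalized functional attains a maximum at some interior point $(\hat t,\hat x,\hat s,\hat y)$. Applying the parabolic maximum principle for semicontinuous functions yields time derivatives and symmetric matrices $X\le Y$ in the usual sense. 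The decisive step is the treatment of the nonlocal operator $\mathcal{J}$: I would split it as $\mathcal{J}=\mathcal{J}_\kappa+\mathcal{J}^\kappa$, an integral over $\{|w|<\kappa\}$ and over $\{|w|\ge\kappa\}$. The singular part $\mathcal{J}_\kappa$ is estimated by a second-order Taylor expansion of the test functions and bounded by $C\int_{|w|<\kappa}|w|^2\,\nu(dw)$, which vanishes as $\kappa\to0$ thanks to \ref{A2} and \ref{A3}; the non-singular part $\mathcal{J}^\kappa$ is controlled by exploiting the maximality of the doubled functional, which bounds $u(\hat t,\hat x+\eta)-u(\hat t,\hat x)$ above by $\bar u(\hat s,\hat y+\eta)-\bar u(\hat s,\hat y)$ plus errors coming from the quadratic penalty.

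Subtracting the subsolution and supersolution inequalities, I would use the sup-inf/inf-sup structure of $H^\pm$ together with the uniform Lipschitz and boundedness bounds of \ref{A1}--\ref{A2} --- crucially the elementary inequality $|\sup_z\inf_y A(y,z)-\sup_z\inf_y B(y,z)|\le\sup_{y,z}|A(y,z)-B(y,z)|$, so that the ordering survives the optimization --- to bound the local terms by a multiple of $|\hat x-\hat y|^2/\varepsilon+|\hat x-\hat y|+\beta(1+|\hat x|^2)$. Letting $\varepsilon\to0$, then $\beta\to0$ and $\kappa\to0$, and absorbing the remaining first-order contribution through a Gronwall argument in the time variable, produces a contradiction, which establishes comparison and hence uniqueness. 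For existence I would invoke Perron's method: the boundedness of $f,b,\sigma,\eta$ and the integrability of $\nu$ allow the construction of ordered barriers of the form $g(x)\pm C(T-t)$ that are respectively a super- and a subsolution for $C$ large; the Perron envelope built between them is, by the standard bump argument adapted to the nonlocal term, a viscosity solution, and the already-proven comparison principle both guarantees its uniqueness and sandwiches it between the barriers.

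For the estimate \eqref{eq:wellposed}, the spatial Lipschitz bound follows by comparing $u(t,x)$ with $u(t,x+h)+K'|h|$: assumption \ref{A2} makes the translated-plus-constant function a supersolution for a suitable $K'=K'(K,T)$, so comparison gives $|u(t,x)-u(t,x+h)|\le K'|h|$; the $\tfrac12$-H\"older regularity in time is then obtained by comparison against barriers of the form $u(s,\cdot)\pm(C_1|t-s|^{1/2}+C_2|t-s|)$, exploiting the spatial regularity just obtained and the boundedness of the coefficients. I expect the main obstacle to be the nonlocal term inside the doubling argument --- namely, passing the integral operator rigorously through the sub- and super-jets at the maximum point and showing that the singular contribution $\mathcal{J}_\kappa$ is negligible while the error generated by $\mathcal{J}^\kappa$ is genuinely absorbed by the quadratic penalty. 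This is precisely the point where the definition of viscosity solution in Definition \ref{defi:visc} is tailored to the nonlocal setting, and where, as anticipated in the introduction, the non-locality of the Isaacs equation makes the analysis more delicate than in the purely local second-order case.
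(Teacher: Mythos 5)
The paper offers no proof of this theorem to compare against: it is stated as a known result, with the proof explicitly deferred to the cited reference \cite{Jakobsen:2005jy}. Your proposal is therefore a reconstruction of the argument in that literature, and its skeleton is indeed the right one --- comparison via doubling of variables with the nonlocal operator split as $\mathcal{J}=\mathcal{J}_\kappa+\mathcal{J}^\kappa$, the singular part killed by a second-order Taylor expansion against $\int_{|w|<\kappa}|w|^2\nu(dw)$ and the bounded part controlled by maximality of the doubled functional; the elementary $\sup\inf$ inequality to survive the Isaacs structure; existence by Perron's method between ordered barriers; uniqueness from comparison.

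Two steps, however, would fail as you state them. First, the spatial Lipschitz estimate: $u(t,x+h)+K'|h|$ is \emph{not} a supersolution of the original equation when $\sigma,b,f,\eta$ depend on $x$. Translating moves the coefficients, and the error $F(t,x,D\phi,D^2\phi,\cdot)-F(t,x+h,D\phi,D^2\phi,\cdot)$ contains $\mathrm{Tr}\big[(a(t,x;y,z)-a(t,x+h;y,z))D^2\phi\big]$, where $D^2\phi$ is the Hessian of an arbitrary test function at the touching point and hence unbounded; no additive constant $K'|h|$ can absorb it (worse, in this equation additive constants are invisible to $F$ altogether, since $F$ has no zeroth-order term and the nonlocal operator annihilates constants, so the ``$+K'|h|$'' only helps at $t=T$). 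The Lipschitz bound must instead be extracted from the doubling argument itself, applied to $u(t,x)-u(t,y)-C|x-y|$, where the coefficient differences are paired with the Ishii matrix inequality $X\le Y$ so that $\mathrm{Tr}\big(a(t,x)X-a(t,y)Y\big)\le C|x-y|^2/\varepsilon$; this is exactly the continuous dependence estimate that constitutes the main theorem of \cite{Jakobsen:2005jy}. Second, your barriers $g(x)\pm C(T-t)$ and $u(s,\cdot)\pm\big(C_1|t-s|^{1/2}+C_2|t-s|\big)$ are built from merely Lipschitz functions, so they cannot be verified as super/subsolutions by differentiation. One must mollify: with $g_\delta=g*\rho_\delta$ one obtains classical barriers $g_\delta\pm\big(K\delta+C_\delta(T-t)\big)$ with $C_\delta\sim K/\delta$, recovering the terminal data by taking the envelope over $\delta$; applying the same device to $u(s,\cdot)$ and optimizing $\delta\sim\sqrt{|t-s|}$ is precisely what produces the exponent $\tfrac12$ in time, rather than that exponent being posited in the barrier from the start. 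With these two repairs your outline becomes the proof given in the cited reference.
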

    The case $H^+ = H^-$ is of special interest to the present context. We formally say that the {\em Isaacs condition} is satisfied if, for all $(q,x,t, A)\in
\mathbb{R}^d\times\mathbb{R}^d\times[0,T]\times\mathbb{S}^d$ and for
 every smooth function $\varphi$,
   \begin{align}\label{eq:isac_cond}
   H^+(t,x,q,A,\varphi(t,\cdot))= H^-(t,x,q,A,\varphi(t,\cdot)).
   \end{align}

\subsection{The Canonical Sample Space}
  The structural properties of the underlying probability space play an important role in dealing with the technical subtleties involved in the game problem. Contrary to \cite{Fleming:1989cg}, there are jumps in the controlled evolution and we find it convenient to follow \cite{Buckdahn;2008ht,ishikawa;2006ts} and work in a canonical Wiener-Poisson space which is described as follows. 

 For a positive constant $T$ and $0\le s< t\le T$, let $\Omega_{s,t}^1$ be the standard Wiener space i.e. the set of all functions from $[s,t]$ to $\mathbb{R}^d$ starting from $0$ and topologized by the sup-norm. We denote the corresponding Borel $\sigma$-algebra by $\mathcal{B}_1^0$ and let $P_{s,t}^1$ be the Wiener measure on $\big(\Omega_{s,t}^1, \mathcal{B}_1^0\big)$. 

   In addition, upon denoting $\mathrm{Q}_{s,t}^* = [s,t]\times \big(\mathbb{R}^m\backslash\{0\}\big)$, let $\Omega_{s,t}^2$ be the set of all $\mathbb{N}\cup\{\infty\}$-valued measures on $(\mathrm{Q}_{s,t}^*, \mathcal{B}(\mathrm{Q}_{s,t}^*))$ where $\mathcal{B}(\mathrm{Q}_{s,t}^*)$ is the usual Borel $\sigma$-algebra
of $\mathrm{Q}_{s,t}^*$. We denote $\mathcal{B}^0_2$ to be the smallest $\sigma$-algebra over $\Omega_{s,t}^2$ so that the mappings $q\in \Omega_{s,t}^2\mapsto q(A)\in \mathbb{N}\cup\{\infty\}$ are measurable for all $A\in \mathcal{B}(\mathrm{Q}_{s,t}^*)$. Let the co-ordinate random measure $N_{s,t}$ be defined as $N_{s,t}(q, A) = q(A)$ for all  $q\in \Omega_{s,t}^2, A\in \mathcal{B}(\mathrm{Q}_{s,t}^*)$ and denote $P_{s,t}^2$ to be the probability measure on $(\Omega_{s,t}^2,\mathcal{B}^0_2 )$ under which $N_{s,t}$ is a Poisson random measure with L\'evy measure $\nu$ satisfying \ref{A3}. 

 Next, for every $0\le s< t\le T$, we define $\Omega_{s,t} \equiv \Omega_{s,t}^1\times \Omega_{s,t}^2, P_{s,t} \equiv P_{s,t}^1\otimes P_{s,t}^2 $ and $\mathcal{B}_{s,t} \equiv \overline{\mathcal{B}_1^0\otimes \mathcal{B}_2^0}$ i.e. the completion of $\mathcal{B}_1^0\otimes \mathcal{B}_2^0$ with respect to the probability measure $P_{s,t}$. We will follow the convention that $\Omega_{t,T} \equiv \Omega_t$ and $\mathcal{B}_{t,T}\equiv \mathcal{F}_{t}$.  A generic element of $\Omega_t$ is denoted by $\omega = (\omega_1,\omega_2)$, where $\omega_i\in \Omega_{t,T}^i$ for $i\in\{1,2\}$, and we define the coordinate functions
\begin{align*}
 W^t_s(\omega)= \omega_1(s)\qquad\text{and} \quad\quad N^t(\omega, A) = \omega_2(A)
\end{align*} for all $0\le t\le s\le T, \omega\in \Omega, A\in \mathcal{B}(\mathrm{Q}_{t,T}^*)$. The process $W^t$ is a Brownian motion starting at $t$ and $N^t$ is a Poisson random measure on the probability space  $(\Omega_t, \mathcal{F}_t, P_t)$, and they are independent. 

Also, for $t\in [0,T]$, the filtration $\mathcal{F}_{t,\cdot}= (\mathcal{F}_{t,s})_{ s\in[t,T]} $ is defined as follows:
\begin{align*}
 \hat{\mathcal{F}}_{t,s}\equiv \sigma\{W_r^t, N^t(A): A\in \mathcal{B}(\mathrm{Q}_{t,r}^*), t\le r\le s\}, \quad \text{where}\quad t\le s\le T.
\end{align*} We make $\hat{\mathcal{F}}_{t,\cdot}$ to be right-continuous and denote it by $\mathcal{F}_{t,\cdot}^+$.  Finally, we augment  $\mathcal{F}_{t,\cdot}^+$ by $P_t$-null sets and call it  $\mathcal{F}_{t,\cdot}$. As and when it necessitates, we extend the filtration $\mathcal{F}_{t,\cdot}$ for $s< t$ by choosing $\mathcal{F}_{t,s}$ as the trivial $\sigma$ algebra augmented by $P_t$-null sets. We follow the convention that $\mathcal{F}_{t,T}= \mathcal{F}_t$.  When the terminal time $T$ is replaced by another time point, say $\tau$, the filtration we have just described is denoted by $\mathcal{F}^\tau_{t,\cdot}$ . 

Finally, note that the space $\Omega_{s,t}$ is defined as the product of canonical Wiener space and Poisson space. Therefore, for any $\tau\in(t,T)$, we can identify the probability space $\big(\Omega_t, \mathcal{F}_{t,\cdot}, P_t \big)$ with $\big(\Omega_{t,\tau}\times\Omega_\tau, \mathcal{F}_{t, \cdot}^\tau\otimes \mathcal{F}_{\tau,\cdot}, P_{t, \tau}\otimes P_{\tau}\big)$ by the following bijection $\pi:\Omega_t\rightarrow \Omega_{t,\tau}\times\Omega_\tau$. For a generic element $\omega = (\omega_1, \omega_2)\in \Omega_t = \Omega_{t,T}^1\times \Omega_{t,T}^2$, we define 

\begin{align*}
 \omega^{t,\tau}&=\big(\omega_1|_{[t,\tau]}, \omega_2|_{[t, \tau]} \big)\in \Omega_{t,\tau}\\
 \omega^{\tau, T} & = \big((\omega_1-\omega_1(\tau))|_{[\tau, T]}, \omega_2|_{[\tau, T]} \big)\in \Omega_{\tau,T}\\
\pi(\omega) &= (\omega^{t,\tau},\omega^{\tau, T})
\end{align*} The description of the inverse map $\pi^{-1}$ is also apparent from above. 

\subsection{Rule of the game}
The player $I$ controls $Y(\cdot)$ and player $II$ chooses $Z(\cdot)$ respectively to minimize and maximize $J$. At any given time $s\in(t,T)$, the players know $\big(X,Y,Z\big)(r)$ for $r< s$ and instantaneous switching at $s$ is possible. Therefore the player who acts first at $s$ is apparently at disadvantage. We follow \cite{Fleming:1989cg} to tackle this problem and formalize two approximate games, namely the upper and lower game. For the lower-game, the player $I$ is allowed to know $Z(s)$ before choosing $Y(s)$, and for the upper-game, player $II$ has the upper hand of knowing $Y(s)$ before choosing $Z(s)$. Next step would be to define upper and lower value of the game. Before that, we need to define the concepts of admissible controls and strategies for the players. 

\begin{defi}[admissible control]
 An admissible control process $Y(\cdot)$(resp. $Z(\cdot)$) for
 player $I$(resp. player $II$) on $[t,T]$ is a  $\mathcal{Y}$(resp. $\mathcal{Z}$)-valued process which
 is $\mathcal{F}_{t,\cdot}$- {\em predictable}. The set of all admissible
 controls for player $I$(resp. $II$) on $[t,T]$ is denoted by $M(t)$(resp.
 N(t)).  We say the controls $Y,\tilde{Y}\in M(t)$ are the same on $[t,s]$ and we write $Y\thickapprox
   \tilde{Y}$ on $[t,s]$ if $P_t\big(Y(r)=
   \tilde{Y}(r)~\text{for a.e.}~ r\in [t,s] \big)= 1 $.  A similar convention is followed for members of $N(t)$. 
 \end{defi}
 Finally, if $Y\in M(t)$, then for every $s\in [t,T]$ there exists $Y^s(\cdot):[t,s]\times \Omega_{t,s}\rightarrow \mathcal{Y}$ such that $Y(r,\omega)= Y ^s(r,\omega^s)$ where $r\in [t,s], \omega\in \Omega_t$, $\omega^s= \omega|_{[t,s]}$ and $Y^{s}(\cdot)$ is a $\mathcal{F}^{s}_{t,\cdot}$-predictable process. 
 
\begin{rem}
   In comparison with \cite{Fleming:1989cg}, we require the control processes to be predictable so that the integrand  $\eta(\cdot)$ in \eqref{eq:SDG_dymcs} is also predictable.  Also, as pointed out by one reviewer, any $\mathcal{F}_{t,\cdot}$-predictable control process $Y$ will have an $\mathcal{F}_{t,\cdot}^+$-predictable version which will have the representation as described above. 
\end{rem}

\begin{defi}[admissible strategy]
   An admissible strategy $\alpha$ (resp $\beta$) for player $I$ (resp. $II$)
   is a mapping $\alpha: N(t)\rightarrow M(t)$ (resp. $\beta:M(t)\rightarrow N(t)
   $) such that if $Y(\cdot)\thickapprox
   \tilde{Y}$(resp. $Z\thickapprox
   \tilde{Z}$) on $[t,s]$, then $\alpha[Y]\thickapprox
   \alpha[\tilde{Y}]$ ( resp. $\beta[Z]\thickapprox
   \beta[\tilde{Z}]$ ) on $[t,s]$ for every $s\in [t, T]$.  The set of
   admissible strategy for player $I$ (resp. $II$) on $[t,T]$ is
   denoted by $\Gamma[t]$ (resp. $\Delta(t)$).
  \end{defi}

\begin{defi}[]({\em Value functions})
 \begin{itemize}
 \item[i.)] The lower value of the SDG \eqref{eq:SDG_dymcs}-\eqref{Pay-off}
 with initial data $(t,x)$ is given by
 \begin{align}\label{upperval}
 U(t,x):= \inf_{\alpha \in \Gamma(t)}\sup_{Z\in N(t)}J\big(t,x; \alpha[Z],Z\big).
 \end{align}
 \item[ii.)] The upper value of the game is defined as follows,
\begin{align}\label{lowerval}
 V(t,x):= \sup_{\beta \in \Delta(t)}\inf_{Y\in M(t)}J\big(t,x; Y,\beta[Y]\big).
 \end{align}
 \end{itemize}
 \end{defi} We say that our game has a value in the sense of Elliot $\&$ Kalton \cite{Elliot:1972ns} if $V(t,x)=U(t,x)$ and call this common value to be the value of the game. The upper and lower values satisfy the following dynamic programming principle, a detailed proof of which is given in Section 3.
\begin{thm}\label{dpp}
   Let \ref{A1},\ref{A2},\ref{A3} hold and  $t,\tau\in [0,T]$ be such that $t< \tau$.
   For every $x\in
   \mathbb{R}^d$, we have
   \begin{align}\label{dpp-low}
 V(t,x) =\sup_{\beta\in \Delta(t)}\inf_{Y\in
    M(t)}E^{t,x}\Big\{\int_{t}^{\tau}f(s, X(s); Y(s), \beta[Y](s))ds+V(\tau,X_\tau)\Big\}
   \end{align}
   where $X(\cdot)$ is the solution of \eqref{eq:SDG_dymcs} with
   $Z(\cdot)=\beta[Y](\cdot)$ for $Y(\cdot)\in M(t)$, and
\begin{align}\label{dpp-up}
 U(t,x) =\inf_{\alpha\in \Gamma(t)}\sup_{Z\in
    N(t)}E^{t,x}\Big\{\int_{t}^{\tau}f(s, X(s); \alpha[Z](s), Z(s))ds+U(\tau,X_\tau)\Big\}
   \end{align}
   where $X(\cdot)$ is the solution of \eqref{eq:SDG_dymcs} with
   $Y(\cdot)=\alpha[Z](\cdot)$ for $Z(\cdot)\in N(t)$.
  \end{thm}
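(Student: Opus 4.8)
The plan is to prove the two identities \eqref{dpp-low} and \eqref{dpp-up} by a double-inequality argument; since the upper value $U$ is treated symmetrically (interchanging the roles of the two players and of $\inf$/$\sup$), I describe only the lower value $V$. Write $\Phi(t,x)$ for the right-hand side of \eqref{dpp-low}, and note that, since the integral $\int_t^\tau f\,ds$ and the terminal evaluation $V(\tau,X_\tau)$ depend on $\beta$ only through its action on $[t,\tau]$, the quantity $\Phi(t,x)$ is determined by restrictions of strategies to $[t,\tau]$. The backbone of the argument is the flow (semigroup) property of the dynamics \eqref{eq:SDG_dymcs}: using pathwise uniqueness of the jump-diffusion SDE together with the identification $\pi:\Omega_t\to\Omega_{t,\tau}\times\Omega_\tau$ and the independence encoded in $\omega^{t,\tau}$ and $\omega^{\tau,T}$, I would first show that for $s\ge\tau$ the solution started from $(t,x)$ coincides with the solution started from $(\tau,X_\tau)$, and consequently that conditioning on $\mathcal{F}_{t,\tau}$ factorizes the payoff as
\[
J(t,x;Y,\beta[Y]) = E^{t,x}\Big[\int_t^\tau f\,ds + J\big(\tau,X_\tau;Y',\beta_{\omega^{t,\tau}}[Y']\big)\Big],
\]
where $\beta_{\omega^{t,\tau}}$ is the continuation of $\beta$ after $\tau$ (depending on the history $\omega^{t,\tau}$) and $Y'$ the corresponding restriction of $Y$.

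For the inequality $V\ge\Phi$, I would fix $\varepsilon>0$ and a strategy $\beta^1\in\Delta(t)$ that is $\varepsilon$-optimal for $\Phi(t,x)$; for each $\xi\in\mathbb{R}^d$ choose a continuation strategy $\beta^{2,\xi}\in\Delta(\tau)$ that is $\varepsilon$-optimal for $V(\tau,\xi)$, and paste: $\hat\beta[Y]$ acts as $\beta^1[Y]$ on $[t,\tau]$ and as $\beta^{2,X_\tau}[Y']$ on $[\tau,T]$. The factorization then gives $\inf_Y J(t,x;Y,\hat\beta[Y])\ge \Phi(t,x)-2\varepsilon$, whence $V\ge\Phi$ as $\varepsilon\to0$. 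For the reverse inequality $V\le\Phi$, I would fix an arbitrary $\beta\in\Delta(t)$, use that its restriction to $[t,\tau]$ yields $\inf_Y E^{t,x}[\int_t^\tau f\,ds + V(\tau,X_\tau)]\le\Phi(t,x)$ to select $Y^1$ that is $\varepsilon$-optimal for this restricted problem, then select (history-measurably) a control $Y^2$ that is $\varepsilon$-optimal for the continuation game $\inf_{Y'}J(\tau,X_\tau;Y',\beta_{\omega^{t,\tau}}[Y'])\le V(\tau,X_\tau)$, and paste $Y^1,Y^2$ into a single $\hat Y\in M(t)$. The factorization then yields $\inf_Y J(t,x;Y,\beta[Y])\le \Phi(t,x)+2\varepsilon$, and a supremum over $\beta$ gives $V\le\Phi$.

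Two reductions make the selections legitimate. First, to avoid an abstract measurable-selection theorem in the spatial variable $\xi$, I would exploit the regularity $|V(\tau,\cdot)|_1\le N$ from Theorem~\ref{thm:wellposed} together with the Lipschitz bounds \ref{A2}: partition $\mathbb{R}^d$ into small Borel cells $\{B_k\}$, fix one near-optimal continuation strategy (resp. control) per cell, and bound the discretization error by $N\cdot\mathrm{diam}(B_k)$, handling the spatial tail through standard moment bounds for $X_\tau$ that follow from \ref{A2}. Second, I must verify that the pasted objects are genuinely admissible, i.e. that the concatenation respects $\mathcal{F}_{t,\cdot}$-predictability and the non-anticipativity condition ``$Y\thickapprox\tilde Y$ on $[t,s]\Rightarrow \hat\beta[Y]\thickapprox\hat\beta[\tilde Y]$ on $[t,s]$'', which is checked cell by cell via the representation $Y(r,\omega)=Y^s(r,\omega^s)$ and the product decomposition $\pi$.

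The main obstacle is precisely this concatenation step for the Elliott--Kalton strategies. Unlike for open-loop controls, the continuation of player $II$'s strategy after $\tau$ depends on the whole history $\omega^{t,\tau}$ and not merely on the state $X_\tau$, so reconciling the pointwise (in $\xi$) choice of $\varepsilon$-optimal continuation strategies with the measurable, non-anticipative pasting required for $\hat\beta\in\Delta(t)$ is delicate; the Poisson random measure compounds this, since one must check that $N^t$ splits consistently across $\tau$ and that every pasted control stays \emph{predictable} rather than merely adapted. I expect the careful bookkeeping of these admissibility and measurability requirements, rather than any single estimate, to be the technical heart of the proof.
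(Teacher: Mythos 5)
Your proposal attempts the classical two-sided pasting argument directly with Elliott--Kalton strategies, and that is exactly what this paper says cannot be done: Section 3 states that it ``does not seem to be possible to replicate the same strategy available for proving DPP for deterministic games'' because of ``serious measurability issues,'' and the paper's proof of Theorem \ref{dpp} takes an entirely different route. The gap in your argument is concrete and two-fold. In the direction $V\le\Phi$, the continuation value $\inf_{Y'}J\big(\tau,X_\tau;Y',\beta_{\omega^{t,\tau}}[Y']\big)$ depends on the history $\omega^{t,\tau}$ through the continuation strategy $\beta_{\omega^{t,\tau}}$, not merely through the state $X_\tau$; your discretization device (Lipschitz regularity of $V(\tau,\cdot)$ plus cells $B_k\subset\mathbb{R}^d$) controls only the dependence on the state variable, so it cannot produce a measurable $\varepsilon$-optimal selection $\omega^{t,\tau}\mapsto Y^2$ --- there is no regularity of the map $\omega^{t,\tau}\mapsto\beta_{\omega^{t,\tau}}$ to discretize against, and an abstract measurable selection over the space of histories is precisely what is unavailable. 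In the direction $V\ge\Phi$ there is a second, distinct gap: on the cell $\{X_\tau\in B_k\}$ your pasted strategy acts as $\beta^{2,\xi_k}[Y(\omega^{t,\tau})](r,\omega^{\tau,T})$, and $\mathcal{F}_{t,\cdot}$-predictability of this process requires joint measurability in $(\omega^{t,\tau},\omega^{\tau,T})$; but an arbitrary $\beta^{2,\xi_k}\in\Delta(\tau)$ is just a map from $M(\tau)$ to $N(\tau)$ with no measurability at all in how its output varies with the input control, hence with $\omega^{t,\tau}$. This joint-measurability requirement is exactly the defining property of the paper's restricted class of $r$-strategies, and it is why the pasting you envision appears in the paper only for $\Delta_1(t)$ and only as a \emph{one-sided} inequality (Theorem \ref{thm:r_dpp}).

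The paper's actual proof of Theorem \ref{dpp} avoids pasting at the level of the full value functions altogether. It first proves the one-sided dynamic programming inequalities for the $r$-values $U_1,V_1$, shows these are viscosity sub/supersolutions (Theorem \ref{thm:r-sub-sup-solution}), sandwiches $u\ge U_1\ge U$ and $v\le V_1\le V$ by comparison (Lemma \ref{lem:3.9}), obtains the opposite bounds from the piecewise-constant $\pi$-approximations $V_\pi$ (Lemma \ref{lem:game_represent}, Theorem \ref{thm:pi-viscosity}), and concludes $U=u$, $V=v$ (Theorem \ref{thm:visc_connection}). The DPP is then a short consequence: the right-hand side of \eqref{dpp-low} is itself the (upper) value of a game on the horizon $[t,\tau]$ with terminal payoff $V(\tau,\cdot)$ in place of $g$, so Theorem \ref{thm:visc_connection} applied to that shorter game shows it is the unique viscosity solution of the corresponding Isaacs equation on $[0,\tau]$ with terminal data $V(\tau,\cdot)$; since $V$ solves the same terminal-value problem, uniqueness gives equality. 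To rescue a purely probabilistic proof you would have to either build the needed joint measurability into the strategy class (as the $r$-strategies do) or change frameworks entirely (e.g., the BSDE approach of Buckdahn--Li \cite{Buckdahn:2007tr}); as written, the selections in both directions of your argument are not justified.
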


 The proof of Theorem \ref{dpp} is not based on probabilistic techniques alone. Following \cite{Fleming:1989cg} we first prove a part of the following viscosity solution connection, and then use it to prove Theorem \ref{dpp}. The statement of the theorem reads is given below, a proof of which is given in Section 3.

 \begin{thm}\label{thm:visc_connection}Let \ref{A1},\ref{A2} and \ref{A3} hold. Then the upper-value $V$ and the lower-value $U$ of the game \eqref{eq:SDG_dymcs}-\eqref{Pay-off} are respectively the unique viscosity solutions of \eqref{eq:upp-IPDE}-\eqref{eq:term_cond} and \eqref{eq:low-IPDE}-\eqref{eq:term_cond}.

 \end{thm}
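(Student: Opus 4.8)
The plan is to prove the statement for the upper value $V$ and the equation \eqref{eq:upp-IPDE} governed by $H^-$; the assertion for the lower value $U$ and \eqref{eq:low-IPDE} follows verbatim after interchanging the roles of the two players (equivalently, swapping $\sup$ and $\inf$ and $\Gamma(t)\leftrightarrow\Delta(t)$). Uniqueness is not the issue: once $V$ is shown to be a bounded viscosity solution of \eqref{eq:upp-IPDE}--\eqref{eq:term_cond}, the comparison assertion of Theorem \ref{thm:wellposed} identifies it with the unique solution. So the substance lies in (i) the regularity of $V$, (ii) a dynamic programming inequality, and (iii) the passage from that inequality to the two viscosity inequalities.

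First I would record the regularity $V\in C_b^{\frac12,1}(\bar{Q}_T)$ together with the terminal condition $V(T,\cdot)=g$. Boundedness and the Lipschitz bound in $x$ come from the standard $L^2$ moment estimates for the state process \eqref{eq:SDG_dymcs} under \ref{A1}--\ref{A3}: writing $X^{t,x}$ and $X^{t,x'}$ for two solutions driven by the same noise and the same controls, a Gronwall argument using $|b|_1,|\sigma|_1\le K$, $|\eta(t,\cdot;y,z;w)|_1\le K\min(|w|,1)$ and $\int_{\mathbb{E}}\min(|w|^2,1)\,\nu(dw)<\infty$ gives $E|X^{t,x}(s)-X^{t,x'}(s)|^2\le C|x-x'|^2$ and $E|X^{t,x}(s)-x|^2\le C|s-t|$. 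These transfer to the pay-off $J$ uniformly over controls and strategies, and survive the $\sup_\beta\inf_Y$, yielding the $\frac12$-Hölder-in-time, Lipschitz-in-space bound; the equality $V(T,\cdot)=g$ is immediate from \eqref{Pay-off}.

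Next I would establish the one-step dynamic programming inequalities with $\tau=t+h$, namely both directions of $V(t,x)=\sup_{\beta}\inf_{Y}E^{t,x}[\int_t^\tau f\,ds+V(\tau,X_\tau)]$. This is exactly where the product structure of the canonical Wiener--Poisson space and the bijection $\pi:\Omega_t\to\Omega_{t,\tau}\times\Omega_\tau$ enter: admissible strategies and controls are concatenated across $\tau$ by splicing $\omega^{t,\tau}$ with $\omega^{\tau,T}$, and the independence of the Brownian and Poisson increments on the two factors lets the conditional expectation over the tail game be rewritten as $V(\tau,\cdot)$ evaluated at $X_\tau$. The measurable selection and non-anticipativity (causality) of $\beta$ needed to paste $\varepsilon$-optimal tail responses form the main probabilistic bookkeeping; I would carry it out as in \cite{Fleming:1989cg}, adapted to \emph{predictable} controls so that $\eta(\cdot)$ in \eqref{eq:SDG_dymcs} remains predictable. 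Strictly, following the bootstrap of \cite{Fleming:1989cg}, only a weak (one-sided) version of these inequalities is needed to obtain the viscosity property, the full equality of Theorem \ref{dpp} then following a posteriori from comparison once $V$ is identified as the unique solution.

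Finally, with the dynamic programming inequality in hand I would derive the two viscosity inequalities via Dynkin's formula. Fix $\phi\in C_p^{1,2}(Q_T)$ with $(t_0,x_0)$ a global extremum of $V-\phi$. For the \emph{supersolution} inequality (global minimum, target $\phi_t+H^-\le 0$) assume for contradiction $\phi_t+H^-(t_0,x_0,D\phi,D^2\phi,\phi(t_0,\cdot))\ge 2\delta>0$; by \ref{A1} and compactness of $\mathcal{Z}$ there is a single $z^\star$ with $\phi_t+\mathcal{L}(t,x,D\phi,D^2\phi;y,z^\star)+\mathcal{J}\phi\ge\delta$ for all $y$ in a space--time neighborhood, and player $II$ playing the constant strategy $\beta\equiv z^\star$ together with Dynkin's formula for $\phi(s,X(s))$ — whose generator produces precisely $\phi_t+\mathrm{Tr}(aD^2\phi)+b\cdot D\phi+\mathcal{J}\phi$ — forces $\sup_\beta\inf_Y E^{t_0,x_0}[\int_{t_0}^{\tau} f\,ds+\phi(\tau,X_\tau)]\ge\phi(t_0,x_0)+\delta h$, contradicting the dynamic programming inequality after using $V\ge\phi$. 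The \emph{subsolution} inequality is the delicate direction: assuming $\phi_t+H^-\le-2\delta$ one obtains, for every $z$, a near-minimizing $y=y(t,x,z)$, and player $I$ must realize these pointwise against an arbitrary causal strategy $\beta$, which I would handle by a measurable selection combined with a time-discretization of the closed loop $Y_s=y(s,X_s,\beta[Y]_s)$, passing to the limit to contradict the dynamic programming inequality. The main obstacle throughout is the nonlocal term $\mathcal{J}$: one must justify convergence of $\int_{\mathbb{E}}(\phi(x+\eta)-\phi(x)-\eta\cdot D\phi)\,\nu(dw)$ for $C_p^{1,2}$ test functions (using $|\eta|\le K\min(|w|,1)$ and \ref{A3}) and, crucially, exploit the \emph{global} nature of the extremum of $V-\phi$ to control $\mathcal{J}V-\mathcal{J}\phi$ in the correct direction — this is what legitimizes replacing $V$ by $\phi$ inside the nonlocal operator and is the step most sensitive to the jump structure absent in \cite{Fleming:1989cg}.
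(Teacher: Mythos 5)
Your plan hinges entirely on step (ii): a dynamic programming (in)equality for the Elliott--Kalton value $V$ itself, which you propose to prove ``as in \cite{Fleming:1989cg}'' by splicing $\varepsilon$-optimal tail strategies across $\tau$. This is precisely what neither \cite{Fleming:1989cg} nor this paper does, and the paper says why: for a general admissible strategy $\beta^{\xi}\in\Delta(\tau)$ and a control $Y\in M(t)$, the pasted object $(r,\omega)\mapsto \beta^{\xi}[Y(\omega^{t,\tau})](r,\omega^{\tau,T})$ composes a strategy --- an arbitrary nonanticipating map between control processes, carrying no measurability in its control argument --- with the random control $\omega^{t,\tau}\mapsto Y(\omega^{t,\tau})$, and there is no reason for the result to be $\mathcal{F}_{t,\cdot}$-predictable, i.e.\ to be an admissible strategy at all. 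The paper circumvents this by introducing the restricted class of $r$-strategies $\Delta_1(t),\Gamma_1(t)$, whose very definition postulates exactly this joint predictability, and it proves only a \emph{one-sided} DPP (Theorem \ref{thm:r_dpp}) and only for the auxiliary $r$-values $U_1,V_1$, never for $U,V$. The full DPP for $U,V$ (Theorem \ref{dpp}) is deduced \emph{from} Theorem \ref{thm:visc_connection} at the very end, so invoking any DPP for $V$ as an ingredient of the proof of Theorem \ref{thm:visc_connection} is circular.

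Your fallback --- ``only a weak one-sided version is needed'' --- does not repair this, because one one-sided inequality yields only one of the two viscosity inequalities: in the paper, \eqref{eq:dpp-sub} gives the supersolution property of $V_1$, whence $v\le V_1\le V$ by comparison and Corollary \ref{cor:r-value} (Lemma \ref{lem:3.9}). The opposite bound $V\le v$ is \emph{not} obtained from any DPP for $V$; it comes from an entirely different mechanism, namely the time-discretized values $V_\pi$ defined by the semigroup recursion \eqref{eq:piecewise}, their game representation $V_\pi=\sup_{\beta\in\Delta(t)}\inf_{Y\in M_\pi(t)}J(t,x;Y,\beta[Y])$ (Lemma \ref{lem:game_represent}), which gives $V_\pi\ge V$ since the infimum runs over the smaller set $M_\pi(t)$, and the scheme-convergence result $V_\pi\to v$ as $\|\pi\|\to 0$ (Theorem \ref{thm:pi-viscosity}); the sandwich then closes the argument. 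Your ``delicate direction'' --- player $I$ realizing the selection $y(t,x,z)$ against an arbitrary causal $\beta$ through the closed loop $Y_s=y(s,X_s,\beta[Y]_s)$ --- is exactly the circular fixed-point problem that this discretization is designed to break: in Lemma \ref{lem:game_represent} the analogous loop $Y_k=\alpha_\epsilon[Z_{k-1}]$, $Z_k=\beta[Y_k]$ is resolved by induction over the partition intervals, using the piecewise-constant structure of $\pi$-admissible controls and strategies. A one-line appeal to ``measurable selection combined with time-discretization'' leaves this, which is the real content of the theorem, unproved.
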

\begin{rem}
    It is now obvious that if the Isaacs condition \eqref{eq:isac_cond} holds, then uniqueness of viscosity solution of IPDEs forces the upper and lower value of the game to coincide in view of Theorem \ref{thm:visc_connection}. This ensures existence of the value, in the sense of Elliot $\&$ Kalton \cite{Elliot:1972ns}, of our SDG. 
\end{rem}

\subsection{Stochastic Verification Theorem}  Before the verification theorem could be formulated, some further technical preparations are needed.  Given a probability space  $(\Omega,\mathcal{F}, P)$ with a filtration
      $\mathcal{F}_{a,\cdot}=\big\{\mathcal{F}_{a,s}: a\le s\le b\big\}$ and  a separable Banach space $\mathbb{B}$ with norm $|\cdot|_\mathbb{B}$ and
      $1\le p<\infty$, the space $L^p_{\mathcal{F}_{a,\cdot}}(a,b; \mathbb{B})$ is defined as follows:
         \begin{align*}
            L^p_{\mathcal{F}_{a,\cdot}}(a,b; \mathbb{B})&=\Big\{\phi(s,\omega), a\le s\le b| \phi(s,\cdot) ~\text{is an}~
         \mathcal{F}_{a,s}-\text{adapted},\mathbb{B}-\text{valued}\\&~\quad\quad\text{measurable process on}
         ~[a,b]~\text{and}~E\big(\int_a^b|\phi(s,\omega)|_\mathbb{B}^pds\big)<\infty \Big\}.
         \end{align*}
\begin{defi}\label{def:parabolic-jets}
  We say that a triplet $(p,q,Q)\in \mathbb{R}\times\mathbb{R}^d\times\mathbb{S}^d$ is in $D_{s+,x}^{1,2+} v(s,x)$, the
  second order one-sided parabolic superdifferential of $v$ at $(s,x)$, if for all $y\in\mathbb{R}^d$ and $r\ge s$,
       \begin{align*}
         v(r,y) \le v(s,x) + p(s-r)+\langle q, y-x\rangle+\langle Q(y-x), y-x\rangle+ o(|s-r|+|y-x|^2).
       \end{align*}The second order one sided parabolic subdifferential of $v$ at $(s,x)$; $D_{s+,x}^{1,2-} v(s,x)$ is
       defined by reversing the  above inequality i.e. $D_{s+,x}^{1,2-} v(s,x)= -D_{s+,x}^{1,2+} (-v(s,x))$
\end{defi} We state the following lemma, well-known in context of viscosity solution theory, characterizing
super and subdifferentials.
   \begin{lem}\label{lem:superdiff-charac}
   Let $v\in USC([0,T]\times\mathbb{R}^d)$ and $(s_0,x_0)\in[0,T)\times\mathbb{R}^d$. Then $(p,q,Q)\in D_{s_0+,x_0}^{1,2+} v(s_0,x_0)$ iff
    there exists a function $\varphi\in C^{1,2}([0,T]\times\mathbb{R}^d)$ such that $v-\varphi $ has a strict
    global maximum at $(s_0,x_0)$ relative to the set $(s,x)$ such that $s\ge s_0$ and
    \begin{align}\label{eq:eqv-jet}
    [\varphi,\varphi_t, D\varphi,D^2\varphi](s_0,x_0)= [v(s_0,x_0), p, q, Q].
    \end{align} Furthermore, if $v(t,x)$ has polynomial growth, i.e. if
    \begin{align}
    \label{eq:growth} |v(s, x)|\le C(1+|x|^k) \quad\text{for some}~k\ge 1\quad\text{and}~(s,x)\in[0,T]\times \mathbb{R}^d,
    \end{align}then $\varphi$ can be chosen so that $\varphi,\varphi_t,D\varphi, D^2\varphi$ all satisfy the same
    growth condition \eqref{eq:growth}, possibly with a different constant in place of $C$.
   \end{lem}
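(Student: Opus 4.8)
The plan is to prove the two implications separately and then treat the polynomial-growth refinement, which is the genuinely delicate point. Throughout I write $P=P(r,y)$ for the second-order polynomial in $(r,y)$ prescribed by the jet data, i.e. the polynomial whose value, time-derivative, gradient and Hessian at $(s_0,x_0)$ are $v(s_0,x_0),p,q,Q$ respectively; with this notation the inequality in Definition \ref{def:parabolic-jets} reads precisely $v(r,y)\le P(r,y)+o(|r-s_0|+|y-x_0|^2)$ as $(r,y)\to(s_0,x_0)$ with $r\ge s_0$.

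The \emph{sufficiency} (``if'') direction is immediate. If $\varphi\in C^{1,2}([0,T]\times\mathbb{R}^d)$ satisfies \eqref{eq:eqv-jet} and $v-\varphi$ has a global maximum over $\{s\ge s_0\}$ at $(s_0,x_0)$, then for every $y\in\mathbb{R}^d$ and $r\ge s_0$ we have $v(r,y)-\varphi(r,y)\le v(s_0,x_0)-\varphi(s_0,x_0)$; a second-order Taylor expansion of $\varphi$ about $(s_0,x_0)$ turns this into exactly the defining inequality of $D_{s_0+,x_0}^{1,2+}v(s_0,x_0)$ with coefficients $(\varphi_t,D\varphi,D^2\varphi)(s_0,x_0)=(p,q,Q)$.

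For the \emph{necessity} (``only if'') direction I would construct the test function by smoothing a modulus. Set $w:=v-P$ and let $\rho=\rho(r,y)$ be the parabolic gauge, equal to $(r-s_0)+|y-x_0|^2$ on $\{r\ge s_0\}$ and extended smoothly to all of $\bar Q_T$. Define
\[
G(\delta):=\sup\Big\{\,\big(v(r,y)-P(r,y)\big)^+:\ r\ge s_0,\ \rho(r,y)\le\delta\,\Big\}.
\]
Since $v\in USC$, the sublevel set above is compact, so $G(\delta)<\infty$; moreover $G$ is nondecreasing, $G(0^+)=0$, and the jet inequality forces $G(\delta)=o(\delta)$. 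A standard averaging/mollification then yields $\psi\in C^2([0,\infty))$, nondecreasing, with $\psi(0)=\psi'(0)=\psi''(0)=0$, $\psi(\delta)=o(\delta)$, and $\psi(\delta)>G(\delta)$ for all $\delta>0$. Putting $\varphi:=P+\psi(\rho)$ (with $\psi$ extended by $0$ to negative arguments, which keeps $\varphi\in C^{1,2}$) one checks that the correction $\psi(\rho)$ has vanishing value, time-derivative, gradient and Hessian at $(s_0,x_0)$, so \eqref{eq:eqv-jet} holds; and for $(r,y)\ne(s_0,x_0)$ with $r\ge s_0$ one has $v-\varphi=w-\psi(\rho)\le G(\rho)-\psi(\rho)<0$, which is exactly the required \emph{strict} global maximum over $\{s\ge s_0\}$.

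Finally, the polynomial-growth clause is where the real work lies, and I expect it to be the main obstacle. The $\varphi=P+\psi(\rho)$ just built contains the quadratic $P$, of growth order $2$, which is too large when $1\le k<2$, while $\psi(\rho)$ may grow like $\rho^{k/2}$. To repair this I would glue: pick $\chi\in C_c^\infty(\bar Q_T)$ with $\chi\equiv1$ on a neighbourhood of $(s_0,x_0)$ and replace $\varphi$ by
\[
\widetilde\varphi:=\chi\,\big(P+\psi(\rho)\big)+(1-\chi)\,M\big(1+|y|^2\big)^{k/2},
\]
with $M$ large. On the neighbourhood where $\chi\equiv1$ this coincides with $\varphi$, so \eqref{eq:eqv-jet} and strictness of the local maximum are untouched; the compactly supported term $\chi(P+\psi(\rho))$ contributes only bounded growth, so $\widetilde\varphi$ and all its derivatives grow at order $k$, matching \eqref{eq:growth}. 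The delicate step is to verify, using the bound $|v|\le C(1+|y|^k)$, that for $M$ large enough one still has $v\le\widetilde\varphi$ globally and the maximum stays strict across the transition region $\{0<\chi<1\}$; this requires a quantitative choice of the cut-off and of $M$ balancing the order-$k$ bound on $v$ against the far-field term, and it is the only part of the argument that is not a routine modulus computation.
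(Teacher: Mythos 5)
Your proof is correct, and it is essentially the standard argument for this result: note that the paper itself gives no proof of Lemma \ref{lem:superdiff-charac}, deferring entirely to \cite{Yong:2005fg} (Lemma 5.4, Chapter 4), and your construction --- the sup-modulus $G(\delta)$ over parabolic sublevel sets, a $C^2$ majorant $\psi$ with $\psi(0)=\psi'(0)=\psi''(0)=0$, $G<\psi$, $\psi(\delta)=o(\delta)$ obtained by iterated averaging, and the test function $\varphi=P+\psi(\rho)$ --- is precisely the route taken in that reference. Two remarks. First, you implicitly correct the conventions in Definition \ref{def:parabolic-jets} (the sign in $p(s-r)$ and the absent factor $\tfrac 12$ in the quadratic term); that is the right reading, since otherwise \eqref{eq:eqv-jet} would be inconsistent with the definition. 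Second, the step you flag as the genuinely delicate one --- verifying that the glued function $\widetilde\varphi=\chi\big(P+\psi(\rho)\big)+(1-\chi)M\big(1+|y|^2\big)^{k/2}$ still dominates $v$ with a strict maximum --- is in fact immediate and needs no quantitative tuning of the cut-off: on $\{s\ge s_0\}$ you already have $\varphi\ge v$ with equality only at $(s_0,x_0)$, and for $M>2C$ one has $M\big(1+|y|^2\big)^{k/2}\ge \tfrac M2\big(1+|y|^k\big)>C\big(1+|y|^k\big)\ge v$ everywhere, so the convex combination satisfies $\widetilde\varphi-v\ge(1-\chi)\big(M(1+|y|^2)^{k/2}-v\big)\ge 0$ on $\{s\ge s_0\}$; at any point other than $(s_0,x_0)$ either $\chi=1$ there, in which case $\widetilde\varphi-v=\varphi-v>0$, or $1-\chi>0$, in which case the displayed lower bound is strictly positive. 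Thus the polynomial-growth clause closes in two lines, and your proposal is complete modulo the routine iterated-averaging construction of $\psi$.
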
 A detailed proof of Lemma \ref{lem:superdiff-charac} could be found in \cite{Yong:2005fg},
    (Lemma 5.4, Chapter 4. to be precise). The following equivalent characterization of Definition \ref{defi:visc} holds. The proof is similar to the local case and follows by combining Lemma \ref{lem:superdiff-charac} and the reasoning \cite[p.2012]{Gozzi:2005bd}. For the sake of completeness of our presentaion, we sketch the proof Lemma \ref{lem:eqv-visc} in the Appendix. 
     \begin{lem}\label{lem:eqv-visc}
       A $v\in USC(\bar{Q}_T)$ is a subsolution of \eqref{eq:bellman_eq_1} with $F= H^+$ or $H^-$, if and only if, for all
       $(t,x)\in Q_T$ and $(p, q, Q)\in D_{t,x}^{1,2+} v(t,x)$
       \begin{align*}
       p+F(t,x,q,Q,\varphi(t,\cdot))\ge 0,
       \end{align*} where $\varphi= \varphi(p,q,Q)$ given by Lemma \ref{lem:superdiff-charac}
       satisfying \eqref{eq:eqv-jet} at $(t,x)$.
     \end{lem}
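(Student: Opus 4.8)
The statement is an equivalence, and I would prove the two implications in turn, using Lemma~\ref{lem:superdiff-charac} as the dictionary between the parabolic superdifferential $D_{t,x}^{1,2+}v$ and smooth functions touching $v$ from above. A preliminary remark settles a parabolic subtlety: because \eqref{eq:bellman_eq_1} is first order in time, the global-in-time maximum required in Definition~\ref{defi:visc} may be replaced, for the subsolution test, by a maximum over future times $\{s\ge t\}$ only. This is the standard reduction underlying the reasoning of \cite[p.~2012]{Gozzi:2005bd}, and it is exactly the form in which Lemma~\ref{lem:superdiff-charac} delivers its test function. With this in hand the nonlocal term causes no new difficulty in the first implication but must be handled carefully in the second.

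For the implication ``subsolution $\Rightarrow$ jet inequality'', fix $(t,x)\in Q_T$ and $(p,q,Q)\in D_{t,x}^{1,2+}v(t,x)$. By Lemma~\ref{lem:superdiff-charac} there is $\varphi\in C_p^{1,2}(Q_T)$ (with $\varphi,\varphi_t,D\varphi,D^2\varphi$ of polynomial growth, using the growth of $v$) such that $v-\varphi$ has a strict maximum at $(t,x)$ over $\{s\ge t\}$ and $[\varphi,\varphi_t,D\varphi,D^2\varphi](t,x)=[v(t,x),p,q,Q]$. By the reduction above, such a $\varphi$ is an admissible test function in the sense of Definition~\ref{defi:visc}, so $\varphi_t(t,x)+F(t,x,D\varphi,D^2\varphi,\varphi(t,\cdot))\ge0$; substituting the jet identities gives $p+F(t,x,q,Q,\varphi(t,\cdot))\ge0$, which is the asserted inequality.

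For the converse, let $\phi\in C_p^{1,2}(Q_T)$ be any test function for which $v-\phi$ has a global maximum at $(t,x)$, normalised so that $\phi(t,x)=v(t,x)$, and put $(p,q,Q):=(\phi_t,D\phi,D^2\phi)(t,x)$. From the maximum one checks directly that $(p,q,Q)\in D_{t,x}^{1,2+}v(t,x)$. To make $\phi$ itself play the role of the realiser required by Lemma~\ref{lem:superdiff-charac}, I would perturb it to $\phi_\eps(s,x'):=\phi(s,x')+\eps\big(|x'-x|^4+|s-t|^2\big)$. The added term vanishes together with its first time-derivative, its gradient and its Hessian at $(t,x)$, so $\phi_\eps$ has the same jet $(p,q,Q)$ and the same spatial profile $\phi_\eps(t,\cdot)=\phi(t,\cdot)+\eps|\cdot-x|^4$ at time $t$ as $\phi$, while $v-\phi_\eps$ now has a \emph{strict} global maximum at $(t,x)$. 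Hence $\phi_\eps$ is an admissible realiser of $(p,q,Q)$, and the hypothesis yields $p+F(t,x,q,Q,\phi_\eps(t,\cdot))\ge0$. It remains to pass to the limit $\eps\downarrow0$: since the gradient of $|x'-x|^4$ vanishes at $x$, the nonlocal operator changes only by
\[
\mathcal{J}(t,x;y,z)\phi_\eps-\mathcal{J}(t,x;y,z)\phi=\eps\int_{\mathbb{E}}|\eta(t,x;y,z;w)|^4\,\nu(dw),
\]
which by \ref{A2}--\ref{A3} is bounded by $\eps K^4\int_{\mathbb{E}}\min(|w|^2,1)\,\nu(dw)\to0$ uniformly in $(y,z)$. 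Therefore $H^\pm(t,x,q,Q,\phi_\eps(t,\cdot))\to H^\pm(t,x,q,Q,\phi(t,\cdot))$, and in the limit $p+F(t,x,q,Q,\phi(t,\cdot))\ge0$, which is precisely the subsolution inequality of Definition~\ref{defi:visc} for $\phi$.

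The step I expect to be the main obstacle is this second implication, and specifically the fact that---unlike in the purely local setting of \cite{Gozzi:2005bd}---the value of $F$ depends on the entire profile $\varphi(t,\cdot)$ and not only on the jet $(p,q,Q)$. The perturbation-and-limit argument above is what reconciles the nonlocal term evaluated at the realiser furnished by Lemma~\ref{lem:superdiff-charac} with the one evaluated at an arbitrary test function, and it simultaneously shows that the jet inequality is independent of the particular realiser chosen. The monotonicity of $\mathcal{J}$, and hence of $H^+$ and $H^-$, with respect to the function slot is the structural property that makes this limiting procedure legitimate and the two formulations genuinely equivalent; care is also needed, as noted, with the one-sided (future) nature of the parabolic maximum.
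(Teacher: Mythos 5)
Your overall architecture matches the paper's: both directions are mediated by the realiser of Lemma \ref{lem:superdiff-charac}, and you have correctly isolated the two genuine subtleties (the one-sided-in-time maximum, and the dependence of $F$ on the whole profile rather than on the jet alone). But there is a gap in your first implication. The ``standard reduction'' you invoke --- that the global-in-time maximum in Definition \ref{defi:visc} may be replaced by a maximum over $\{s\ge t\}$ only --- is not something you may assume: it \emph{is} the content of this direction, and it is exactly what the paper's proof establishes. The realiser $\varphi$ touches $v$ from above only for future times, so it is \emph{not} an admissible test function for Definition \ref{defi:visc}, and the paper bridges this by a concrete perturbation: after reducing (WLOG) to $t=0$, it considers $\varphi+\mu/s$, observes that $v-\varphi-\mu/s$ attains a strict global maximum at an interior point $(t_\mu,x_\mu)$ with $t_\mu>0$ (since $-\mu/s\to-\infty$ as $s\downarrow 0$), applies the subsolution property there --- noting that the added term depends on time alone, hence shifts the nonlocal slot by a constant that $\mathcal{J}$ does not see, while contributing $-\mu/t_\mu^2<0$ to the time derivative --- and then lets $\mu\downarrow 0$, using $(t_\mu,x_\mu)\to(0,x)$ (strictness of the maximum) and continuity of $H^\pm$. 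Without this argument, or an equivalent one, your first implication is a citation rather than a proof; note also that the reduction is not merely the local-case statement of \cite{Gozzi:2005bd}: in the nonlocal setting one needs the additional observation that a time-only perturbation leaves $\mathcal{J}$ untouched.

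Your second implication, by contrast, is correct and is in fact more careful than the paper, which dismisses that direction as trivially following from Lemma \ref{lem:superdiff-charac}. The strictification $\phi_\eps=\phi+\eps\big(|x'-x|^4+|s-t|^2\big)$ preserves the jet, produces a legitimate realiser in the sense of \eqref{eq:eqv-jet}, and your computation $\mathcal{J}(t,x;y,z)\phi_\eps-\mathcal{J}(t,x;y,z)\phi=\eps\int_{\mathbb{E}}|\eta|^4\,\nu(dw)\le\eps K^4\int_{\mathbb{E}}\min(|w|^2,1)\,\nu(dw)$, uniform in $(y,z)$, is exactly what is needed to pass to the limit in $H^\pm$. (This direction does require reading the lemma's hypothesis as holding for \emph{every} realiser of the jet, as you note; that reading is consistent with the paper, whose proof of the other direction works for an arbitrary realiser.) One caution about your closing remark: monotonicity of $\mathcal{J}$ in the function slot is not what legitimises the limit --- it runs the wrong way, since $\phi_\eps\ge\phi$ gives $F(t,x,q,Q,\phi_\eps(t,\cdot))\ge F(t,x,q,Q,\phi(t,\cdot))$, so the inequality for $\phi_\eps$ does not by itself imply the one for $\phi$. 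What closes the argument is the uniform $O(\eps)$ bound you computed, nothing more.
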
 With slight abuse of notation, for the rest of this section we denote the space of all functions $v\in C^{1,2}(Q_T)$
     with $v, Dv$ and $D^2v(t,x)$ satisfying \eqref{eq:growth} by $C_k^{1,2}(Q_T)$. In fact, $C_k^{1,2}(Q_T)$ is a separable
     Banach space with respect to the usual weighted norm. We are now ready to phrase the
     verification theorem.

\begin{thm}[Verification Theorem]\label{thm:verification}
       Assume \ref{A1},\ref{A2}, \ref{A3} and the condition \eqref{eq:isac_cond} holds. Let $u, v\in C_1(\bar{Q}_T)$
       be respectively a sub and supersolution of \eqref{eq:upp-IPDE} satisfying \eqref{eq:term_cond}. Fix any
       $(t,x)\in Q_T$. Let $(Y^*, Z^*)\in M(t)\times N(t)$ be an admissible control pair for the SDG
       \eqref{eq:SDG_dymcs}-\eqref{Pay-off} starting at $(t,x)$ and $X^*(\cdot)$ be the corresponding solution of
       \eqref{eq:SDG_dymcs}. Suppose that there exist c\`{a}dl\`{a}g processes $(p^i, q^i, Q^i; \Phi^i)\in L_{\mathcal{F}_{t,\cdot}}^2(t,T;\mathbb{R})\times
        L_{\mathcal{F}_{t,\cdot}}^2(t,T;\mathbb{R}^d)\times L_{\mathcal{F}_{t,\cdot}}^2(t,T;\mathbb{S}^d)\times
        L_{\mathcal{F}_{t,\cdot}}^2(t,T;C_1^{1,2}(Q_T))$ for $i\in\{1,2\}$ with $\Phi^i$ progressively measurable, for $i\in\{1,2\}$, such that
        \begin{itemize}
        \item[a.)]for a.e.
        $s\in[t,T]$ and $i=1$
        \begin{align}
        \label{eq:diff-sup}(p^1(s), q^1(s), Q^1(s))\in D^{1,2 +}_{s+}u(s, X^*(s))~~\text{and}
        \end{align}$u-\Phi^1_s$ has a global maximum at $(s,X^*(s))$ with $\Phi_s^1(s,X^*(s))$ $= u(s,X^*(s))$ $P_t$-a.s. and
        \begin{align}
          \label{eq:diff-sup-integral}E^{t,x}\Big\{\int_t^T[p^1(s)&+\mathcal{L}(s, X^*(s), p^1(s),q^1(s), Q^1(s); Y^*(s),Z^*(s))
          \\\nonumber&\quad+\mathcal{J}(s,X^*(s); Y^*(s),Z^*(s))\Phi^1_s(s,X^*(s))]ds\Big\}\le 0.
        \end{align}
        \item[b.)]  For a.e.
        $s\in[t,T]$ and $i=2$
        \begin{align}
        \label{eq:sub-diff}(p^2(s), q^2(s), Q^2(s))\in D^{1,2 -}_{s+}v(s, X^*(s))~~\text{and}
        \end{align}$v-\Phi^2_s$ has a global minimum at $(s,X^*(s))$ with $\Phi_s^2(s,X^*(s))$ $= v(s,X^*(s))$ $P_t$-a.s. and
        \begin{align}
          \label{eq:sub-diff-integral}E^{t,x}\Big\{\int_t^T[p^2(s)&+\mathcal{L}(s, X^*(s), p^2(s),q^2(s), Q^2(s); Y^*(s),Z^*(s))
          \\\nonumber&\quad+\mathcal{J}(s,X^*(s); Y^*(s),Z^*(s))\Phi^2_s(s,X^*(s))]ds\Big\}\ge 0.
        \end{align}
        \end{itemize}
     Then $(Y^*, Z^*)$ is an `optimal' control-pair for the SDG in the sense that
     \begin{align}
       \label{eq:finalconc}U(t,x)=V(t,x)=J(t,x;Y^*,Z^*).
     \end{align}
     \end{thm}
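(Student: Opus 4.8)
The plan is to prove \eqref{eq:finalconc} by a two-sided squeeze: comparison will trap the common game value between $u$ and $v$, while an Itô--Dynkin estimate will trap $J(t,x;Y^*,Z^*)$ on the opposite side, so that both are pinned to a single number. Since \eqref{eq:isac_cond} holds, $H^+=H^-$, hence \eqref{eq:upp-IPDE} and \eqref{eq:low-IPDE} are the same equation; the uniqueness assertion of Theorem \ref{thm:wellposed} together with Theorem \ref{thm:visc_connection} then forces $U\equiv V$, with $V$ the unique viscosity solution of \eqref{eq:upp-IPDE}--\eqref{eq:term_cond}. Since $u$ and $v$ are respectively a sub- and a supersolution of this same problem with $u(T,\cdot)=g=v(T,\cdot)$, the comparison principle of Theorem \ref{thm:wellposed} gives $u\le V\le v$ on $\bar{Q}_T$; in particular
\[ u(t,x)\ \le\ U(t,x)=V(t,x)\ \le\ v(t,x). \]

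Next I derive $J(t,x;Y^*,Z^*)\le u(t,x)$ from hypothesis (a). Fix a partition $t=s_0<\cdots<s_n=T$. On each $[s_k,s_{k+1}]$ I apply the Itô--Dynkin formula for the jump-diffusion \eqref{eq:SDG_dymcs} to the random test field $\Phi^1_{s_k}\in C_1^{1,2}(Q_T)$ evaluated along $X^*$. Because $\Phi^1\in L^2_{\mathcal{F}_{t,\cdot}}(t,T;C_1^{1,2}(Q_T))$ and $X^*$ has finite moments under \ref{A1}--\ref{A3}, the Brownian and \emph{compensated} Poisson integrals are true martingales, so after conditioning on $\mathcal{F}_{t,s_k}$ and taking expectations only the generator survives, and the compensated jump part reproduces exactly $\mathcal{J}(s,X^*(s);Y^*(s),Z^*(s))\Phi^1_{s_k}$. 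The touching property in \eqref{eq:diff-sup} (via Lemma \ref{lem:superdiff-charac}) gives $u\le\Phi^1_{s_k}$ everywhere with equality at $(s_k,X^*(s_k))$, turning the Dynkin identity into the one-sided bound
\[ E\big[u(s_{k+1},X^*(s_{k+1}))\big]-E\big[u(s_k,X^*(s_k))\big]\ \le\ E\Big[\int_{s_k}^{s_{k+1}}\big(\partial_r\Phi^1_{s_k}+\mathcal{L}_0^{Y^*,Z^*}\Phi^1_{s_k}\big)(r,X^*(r))\,dr\Big], \]
where $\mathcal{L}_0^{Y^*,Z^*}$ denotes the generator of \eqref{eq:SDG_dymcs} under $(Y^*,Z^*)$. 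Summing in $k$, the left side telescopes to $E[g(X^*(T))]-u(t,x)$; on the diagonal $r\to s_k$ the identifications $\partial_s\Phi^1_s=p^1$, $D\Phi^1_s=q^1$, $D^2\Phi^1_s=Q^1$ at $(s,X^*(s))$ convert the right side, in the mesh limit, into $E\big[\int_t^T\big(p^1(s)+\mathcal{L}(s,X^*,q^1,Q^1;Y^*,Z^*)-f(s,X^*;Y^*,Z^*)+\mathcal{J}(s,X^*;Y^*,Z^*)\Phi^1_s\big)\,ds\big]$. Invoking \eqref{eq:diff-sup-integral} cancels everything but the running cost and yields $E[g(X^*(T))]-u(t,x)\le -E\big[\int_t^T f\,ds\big]$, i.e. $J(t,x;Y^*,Z^*)\le u(t,x)$.

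The symmetric argument, applied to the supersolution $v$ with the test fields $\Phi^2$ touching $v$ from below and the hypotheses \eqref{eq:sub-diff}--\eqref{eq:sub-diff-integral}, gives $J(t,x;Y^*,Z^*)\ge v(t,x)$. Chaining this with the first paragraph,
\[ u(t,x)\ \le\ U(t,x)=V(t,x)\ \le\ v(t,x)\ \le\ J(t,x;Y^*,Z^*)\ \le\ u(t,x), \]
so every inequality is forced to be an equality, which is exactly \eqref{eq:finalconc}.

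I expect the delicate point to be the rigorous non-smooth Itô--Dynkin step: (i) verifying the true-martingale property so that the stochastic and compensated-jump integrals vanish in expectation, which is where the $L^2_{\mathcal{F}_{t,\cdot}}$ and growth hypotheses on $(p^i,q^i,Q^i,\Phi^i)$ and the moment bounds on $X^*$ enter; and (ii) passing to the mesh limit in the partition sums, replacing the partition-dependent field $\Phi^1_{s_k}$ by the diagonal $\Phi^1_s$, which requires the c\`{a}dl\`{a}g regularity of the jets $(p^i,q^i,Q^i)$, the progressive measurability and continuity of $s\mapsto\Phi^i_s$, and a dominated-convergence argument. The nonlocal operator $\mathcal{J}$ aggravates (ii), since the limit passage must control the L\'evy tails uniformly along the partition, which is precisely where \ref{A2}--\ref{A3} are needed.
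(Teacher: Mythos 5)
Your global architecture coincides with the paper's: trap the common value between $u$ and $v$ via \eqref{eq:isac_cond}, Theorem \ref{thm:visc_connection} and the comparison principle of Theorem \ref{thm:wellposed}, trap $J(t,x;Y^*,Z^*)$ on the opposite side by an It\^{o}--L\'{e}vy estimate along $X^*$ using the touching fields $\Phi^i$, and chain the inequalities; the algebra converting \eqref{eq:diff-sup-integral} and \eqref{eq:sub-diff-integral} into $J\le u$ and $J\ge v$ is also correct. The gap lies in the one step that carries all the analytic weight: your partition/telescoping/mesh-limit implementation of the It\^{o}--Dynkin bound. This is essentially the route of \cite{Zhou:1997cn}, the very argument whose technical inconsistencies motivated the corrected proof of \cite{Gozzi:2005bd,Gozzi:2010bd}, which the present paper adapts to the nonlocal setting. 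Concretely, after telescoping you must show
\begin{align*}
\sum_k E\int_{s_k}^{s_{k+1}}\Big|(\partial_r+\mathcal{L}_0)\Phi^1_{s_k}(r,X^*(r))-\big[p^1(r)+\mathcal{L}(r,X^*(r),q^1(r),Q^1(r);\cdot)-f+\mathcal{J}(r,\cdot)\Phi^1_r\big]\Big|\,dr\longrightarrow 0
\end{align*}
as the mesh tends to $0$, where $\mathcal{L}_0$ denotes the generator without the running cost. The hypotheses give only that $s\mapsto\Phi^1_s$ is c\`{a}dl\`{a}g and that $h(s):=E\|\Phi^1_s\|^2_{C_1^{1,2}}$ is integrable in $s$; they do not give a dominating function that is uniform over partitions. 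Writing $\kappa(r)=s_k$ for $r\in[s_k,s_{k+1})$, the natural bound on your integrand is $C\,\|\Phi^1_{\kappa(r)}\|_{C_1^{1,2}}(1+|X^*(r)|)$, and since $h$ is merely $L^1$ in $s$, the function $h(\kappa(\cdot))$ admits no partition-independent integrable majorant (partition points may sit on spikes of $h$, and left-endpoint Riemann sums of an $L^1$ function need not converge to its integral). So the dominated-convergence argument you defer to cannot be run as stated: you correctly flagged this as the delicate point, but for this route it is fatal rather than merely delicate.

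The paper circumvents exactly this by differentiating instead of integrating. It fixes a single right Lebesgue point $\tau$ of the coefficient processes (Lemmas \ref{lem:lebsgue} and \ref{lem:lpt}), works on the factor $(\Omega_\tau,P_2)$ of the Wiener--Poisson space, on which $\Phi^1_\tau$ is $P_1$-a.s.\ deterministic --- note this product-space identification, together with Lemma \ref{markovprop}, is also what makes your ``conditioning on $\mathcal{F}_{t,s_k}$ kills the stochastic integrals'' step rigorous; for a random field that is only $\mathcal{F}_{t,s_k}$-measurable, formal conditioning is not enough --- applies It\^{o}--L\'{e}vy there, and obtains for a.e.\ $\tau$ a bound on the upper Dini derivative of $G(\tau)=E\big[u(\tau,X^*(\tau))\big]$ (inequality \eqref{eq:est-final-1}), together with a fixed $L^1$ majorant \eqref{faous-3}. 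The conclusion $G(T)-G(t)\le -E\int_t^T f^*\,ds$ then follows from the real-variable lemma of \cite[Ch.~4, Lemma 5.2]{Yong:2005fg}, as justified via \cite{Gozzi:2010bd}. The point is that this differentiation-then-integration route needs only pointwise a.e.\ (Lebesgue-point) convergence plus one dominating function, which is precisely what the $L^2$ hypotheses supply, and no uniformity over partitions is ever required. To rescue your telescoping scheme you would have to build this Lebesgue-point machinery anyway, at which stage you would have reproduced the paper's proof.
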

     \begin{rem} The redundancy in the above statement is apparent for following reasons. Given the assumptions \ref{A1},\ref{A2} and\ref{A3},  if $\Phi^i$ is progressively 
     measurable, the integrability condition on $(p^i, q^i, Q^i)$ is automatically satisfied and one can replace them by derivatives of $\Phi^i_\cdot$ at $(\cdot, X^*(\cdot))$. Therefore, it is possible to equivalently state the theorem without introducing $(p^i, q^i, Q^i)$. However, we adopt this particular format on purpose.  As has already been mentioned, even for pure diffusions, so far no verification theorem has been formulated for SDG using viscosity solution framework. In such a scenario, an appropriate formulation would be to drop  $\Phi^i$ and leave the statement in terms of semijets only.
     \end{rem}

\section{Proof of dynamic programming principle}
    We start this section with the following observation as an immediate consequence of the definition of admissible controls and the remark following the definition. We state this as a lemma for later reference.
    \begin{lem}\label{lem:control-breaup}
       Let $0\le\bar{t}< t< T$ and  $Y(\cdot)\in M(\bar{t})$. Then, for $P_{\bar{t},t}$-a.e. $\omega^1\in\Omega_{\bar{t},t}$, the map $Y(\omega^1): [t,T]\times \Omega_t\rightarrow \mathcal{Y}$ defined by
       $Y(\omega^1)(r,\omega^2):=Y(r,\pi^{-1}(\omega^1,\omega^2))$ is a version of an $\mathcal{F}_{t,\cdot}$-predictable process. A similar assertion holds for members of $N(\bar{t})$.
    \end{lem}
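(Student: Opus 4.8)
The plan is to exploit the product structure of the canonical Wiener--Poisson space across the splitting time $t$, together with Fubini's theorem for null sets, so as to transfer predictability from the product level down to the sliced level. Throughout, write $\mathcal{P}_{[t,T]}(\bar t)$ for the $\mathcal{F}_{\bar t,\cdot}$-predictable $\sigma$-field on $[t,T]\times\Omega_{\bar t}$ and $\mathcal{P}(t)$ for the $\mathcal{F}_{t,\cdot}$-predictable $\sigma$-field on $[t,T]\times\Omega_t$; recall that $\mathcal{P}_{[t,T]}(\bar t)$ is generated by the predictable rectangles $(a,b]\times A$ with $t\le a<b\le T$ and $A\in\mathcal{F}_{\bar t,a}$, together with the initial sets $\{t\}\times A'$, $A'\in\mathcal{F}_{\bar t,t}$. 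Here $\pi$ is the bijection splitting at $t$, namely $\pi:\Omega_{\bar t}\to\Omega_{\bar t,t}\times\Omega_t$, and we fix $\omega^1\in\Omega_{\bar t,t}$ while varying $(r,\omega^2)\in[t,T]\times\Omega_t$.

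First I would record the filtration factorization induced by $\pi$. Because the canonical Brownian motion has independent increments and the co-ordinate Poisson measure is independent over disjoint time sets (this is exactly what the shift $\omega_1-\omega_1(t)$ in the definition of $\pi$ encodes), the identification stated just before the lemma upgrades, for every $r\in[t,T]$, to
\[
\pi_*\,\mathcal{F}_{\bar t,r}=\overline{\mathcal{B}_{\bar t,t}\otimes \hat{\mathcal{F}}_{t,r}},
\]
the completion being with respect to $P_{\bar t,t}\otimes P_t$, and $\mathcal{B}_{\bar t,t}=\mathcal{F}^t_{\bar t,t}$ the full (completed) terminal $\sigma$-field on $\Omega_{\bar t,t}$. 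In words, the information accumulated up to a time $r\ge t$ splits into the entire history on $[\bar t,t]$, living on the first factor, and the increments on $[t,r]$, living on the second factor. I would verify this on the un-augmented fields $\hat{\mathcal{F}}$ by checking it on the generating increments of $W^{\bar t}$ and $N^{\bar t}$, and then adjoin the $P_{\bar t}$-null sets.

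Next I would show that the predictable $\sigma$-field itself factors, $\pi_*\,\mathcal{P}_{[t,T]}(\bar t)=\mathcal{B}_{\bar t,t}\otimes\mathcal{P}(t)$ modulo completion and the obvious reordering of the time coordinate. This is checked on generators: for a predictable rectangle $(a,b]\times A$ with $A\in\mathcal{F}_{\bar t,a}$, the previous step lets me approximate $A$ in $P_{\bar t}$-measure by finite unions of products $B^1\times C$ with $B^1\in\mathcal{B}_{\bar t,t}$ and $C\in\hat{\mathcal{F}}_{t,a}$, whereupon $(a,b]\times(B^1\times C)=B^1\times\big((a,b]\times C\big)$ exhibits the right-hand side, since $(a,b]\times C$ is a predictable rectangle on $[t,T]\times\Omega_t$. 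A monotone-class argument closes the identity. Consequently, a bounded real-valued $\mathcal{P}_{[t,T]}(\bar t)$-measurable process $g$, written through $\pi$ as $g(r,\omega^1,\omega^2)$, is $\mathcal{B}_{\bar t,t}\otimes\mathcal{P}(t)$-measurable; by the slicing half of Fubini's theorem, for $P_{\bar t,t}$-a.e.\ $\omega^1$ the map $(r,\omega^2)\mapsto g(r,\omega^1,\omega^2)$ is a version of a $\mathcal{P}(t)$-measurable, i.e.\ $\mathcal{F}_{t,\cdot}$-predictable, process. The exceptional $P_{\bar t,t}$-null set of $\omega^1$ and the word ``version'' enter precisely here through the completion: the raw factorization is exact, but adjoining $P_{\bar t}$-null sets only yields an a.e.\ statement for the slices.

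Finally I would pass from scalar to $\mathcal{Y}$-valued processes. Embedding the compact metric space $\mathcal{Y}$ into the Hilbert cube (equivalently, fixing a countable family $\{h_n\}$ of continuous functions separating the points of $\mathcal{Y}$), each $h_n\circ Y$ is a bounded real-valued predictable process, so the previous paragraph furnishes a $P_{\bar t,t}$-co-null set $\Omega^{(n)}\subset\Omega_{\bar t,t}$ on which the slice of $h_n\circ Y$ admits an $\mathcal{F}_{t,\cdot}$-predictable version. On the co-null intersection $\bigcap_n\Omega^{(n)}$ all these versions exist simultaneously, and since the $h_n$ separate points, the reconstructed $\mathcal{Y}$-valued slice $Y(\omega^1)$ is itself a version of an $\mathcal{F}_{t,\cdot}$-predictable $\mathcal{Y}$-valued process; this settles the claim for $Y\in M(\bar t)$, and the identical argument with $\mathcal{Z}$ in place of $\mathcal{Y}$ disposes of the case $Z\in N(\bar t)$. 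The main obstacle is the bookkeeping of the first three steps under the completion and right-continuous augmentation of the filtration: the product factorization is transparent for the generated fields but survives only up to $P_{\bar t}$-null sets afterwards, and the entire force of ``for a.e.\ $\omega^1$'' and ``is a version of'' rests on the Fubini null-set argument controlling this discrepancy.
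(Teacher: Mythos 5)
Your proof is correct, and it is in fact considerably more than the paper offers: the paper states this lemma with no proof at all, declaring it ``an immediate consequence of the definition of admissible controls and the remark following the definition'' --- that is, of the representation property recorded just after the definition of admissible controls, by which a predictable process on the canonical space satisfies $Y(r,\omega)=Y^s(r,\omega^s)$ for an $\mathcal{F}^s_{t,\cdot}$-predictable $Y^s$, so that freezing the initial path segment $\omega^1$ visibly leaves a non-anticipating functional of the remaining segment. Your argument supplies exactly the measure theory that this assertion glosses over: the factorization of the raw filtration across the splitting time (verified on the generating increments of $W^{\bar t}$ and $N^{\bar t}$, which is precisely what the shift $\omega_1-\omega_1(t)$ in the definition of $\pi$ encodes), the induced factorization of the predictable $\sigma$-field via a monotone-class argument on predictable rectangles, and the completion half of Fubini's theorem to extract predictability of almost every slice --- thereby locating precisely where ``for $P_{\bar t,t}$-a.e.\ $\omega^1$'' and ``version'' enter, which the paper's one-line justification leaves implicit. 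Two pieces of bookkeeping would make your write-up airtight. First, rather than proving the pushforward identity $\pi_*\,\mathcal{F}_{\bar t,r}=\overline{\mathcal{B}_{\bar t,t}\otimes\hat{\mathcal{F}}_{t,r}}$ for the augmented right-continuous filtration (whose inclusion ``$\subseteq$'' requires a Blumenthal-type independent-increments argument to absorb the right-continuous regularization), it is cleaner to first replace $Y$ by an indistinguishable raw-predictable version --- this is licensed by the paper's own remark that any $\mathcal{F}_{t,\cdot}$-predictable control has an $\mathcal{F}^+_{t,\cdot}$-predictable version, and rectangles $(a,b]\times A$ with $A\in\hat{\mathcal{F}}^+_{\bar t,a}$ decompose as $\bigcup_n (a+\tfrac1n,b]\times A$ with $A\in\hat{\mathcal{F}}_{\bar t,a+1/n}$ --- so that your \emph{exact} raw factorization applies and all null-set losses are confined to the Fubini step. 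Second, the Fubini slicing should be performed with respect to $P_{\bar t,t}\otimes(\mathrm{Leb}\otimes P_t)$ on $\Omega_{\bar t,t}\times\big([t,T]\times\Omega_t\big)$, so that ``version'' means $\mathrm{Leb}\otimes P_t$-a.e.\ equality of the slice with a genuinely $\mathcal{F}_{t,\cdot}$-predictable process; this matches the equivalence $\thickapprox$ the paper uses for controls and is exactly what the subsequent applications (Lemma \ref{markovprop}, Theorem \ref{thm:r_dpp}) require. Your Hilbert-cube reduction for the $\mathcal{Y}$-valued case is sound; just redefine the reconstructed process to equal a fixed point of $\mathcal{Y}$ on the predictably measurable null set where the coordinate functions fall outside the image of the embedding, so that the predictable version is genuinely $\mathcal{Y}$-valued.
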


As has been pointed out in Section 2, on the probability space $\big(\Omega_t, \mathcal{F}, \mathcal{F}_{t,\cdot}, P_t\big)$, there exists unique solution $X_{t,x}(\cdot)$ of 
the SDE \eqref{eq:SDG_dymcs} for any $4$-tuple $(t,x,Y, Z)\in[0,T)\times\mathbb{R}^d\times M(t)\times N(t)$  i.e.
\begin{align}
    \label{eq:int_represent} X_{t,x}(s) &= X_{t,x}(\tau)+\int_{\tau}^sb(r,X_{t,x}(r);\gamma(r))dr+\int_{\tau}^s
       \sigma(r,X_{t,x}(r);\gamma(r))dW^t(r)\\&
       \nonumber \qquad\qquad +\int_{\tau}^s\int_{\mathbb{E}}
       \eta(r,X_{t,x}(r^-);\gamma(r); w)d\tilde{N}^t(dr,dw),
\end{align} where $\tau\le s\le T$ and $\gamma(\cdot)$ 
is a shorthand for the pair $(Y(\cdot), Z(\cdot))$. For any $\tau\in (t,T]$ and $\omega^{t,\tau}\in\Omega_{t,\tau},\omega^{\tau,T}\in \Omega_\tau$, we define 
 \begin{align*}&\tilde{\gamma}(r,\omega^{t,\tau},\omega^{\tau,T}) \equiv  \gamma(r,\pi^{-1}(\omega^{t,\tau},\omega^{\tau,T}))\\\text{and}\quad &\tilde{X}(s, \omega^{t,\tau},\omega^{\tau,T}) = X_{t,x}(s,\pi^{-1}(\omega^{t,\tau},\omega^{\tau,T})).\end{align*} In addition, for 
$P_{t,\tau}$-a.e. $\omega^{t,\tau}\in \Omega_{t,\tau}$, we wish to have 
\begin{align}
     \label{eq:path_break_up} \tilde{X}(s,\omega^{t,\tau},\cdot) &=X_{t,x}(\tau)
      +\int_{\tau}^sb(r,\tilde{X}(r,\omega^{t,\tau},\cdot );\tilde{\gamma}(r,\omega^{t,\tau},\cdot))dr\\ \nonumber & \quad+\int_\tau^s
      \sigma(r,\tilde{X}(r,\omega^{t,\tau},\cdot );\tilde{\gamma}(r,\omega^{t,\tau},\cdot))dW^\tau(r)\\
      \nonumber &\qquad
      +\int_\tau^s\int_{\mathbb{E}}
      \eta(r,\tilde{X}(r^-,\omega^{t,\tau},\cdot );\tilde{\gamma}(r,\omega^{t,\tau},\cdot);w)d\tilde{N}^{\tau}(dr,dw).
\end{align}
For $\tau\in [t,T]$, it follows straight from the definition that
\begin{align*}
 W^t_s\big(\pi^{-1}(\omega^{t,\tau},\omega^{\tau,T})\big)-W^t_\tau\big(\pi^{-1}(\omega^{t,\tau},\omega^{\tau,T})\big) = \omega^\tau(s).
\end{align*} For the Poisson random measure, it also follows from the definition that
 $$\big(\pi^{-1}(\omega^{t,\tau},\omega^{\tau,T})\big)(A) = \omega^{\tau,T}_2(A)$$
where $A$ is any Borel subset of $[\tau,T]\times \mathbb{R}^m\backslash\{0\}$. Therefore, for $P_{t, \tau}$-a.e  $\omega^{t,\tau}\in\Omega_{t,\tau}$, the processes $W^t_s\big(\pi^{-1}(\omega^{t,\tau},\omega^{\tau,T})\big)-W^t_\tau\big(\pi^{-1}(\omega^{t,\tau},\omega^{\tau,T})\big)$ and the random measure $N^t(\pi^{-1}(\omega^{t,\tau},\omega^{\tau,T}))$ respectively coincides with the canonical Brownian motion $W^\tau_s$ and the canonical Poisson random measure $N^\tau$ on the probability space $(\Omega_\tau, \mathcal{F}_{\tau,\cdot}, P_\tau)$. Therefore, for $P_{t,\tau}$-a.e. $\omega^{t,\tau}\in \Omega_{t,\tau}$, the equality \eqref{eq:path_break_up} holds as a consequence of \eqref{eq:int_represent}. 
We invoke the uniqueness for the SDE \eqref{eq:SDG_dymcs} to conclude that the paths of $\tilde{X} (s, \omega^{t,\tau},\cdot)_{s\in [\tau, T]}$ will coincide with those of the solution of \eqref{eq:SDG_dymcs} on $\Omega_\tau$ with initial condition $(\tau,X_{t,x}(\tau))$ and control pair [as declared in Lemma \ref{lem:control-breaup}] $(Y(\omega^{t,\tau}),Z(\omega^{t,\tau}))(\cdot)$ , for $P_{t,\tau}$-almost all $\omega^{t,\tau}$. Unless otherwise mentioned, we use the same notation $X_{t,x}$ when it is considered as a process on $(\Omega_\tau, P_\tau)$ and we actually mean the process $\tilde{X}$ on $(\Omega_\tau, P_\tau)$. 
\begin{lem}\label{markovprop}
Let $X_{t,x}(\cdot)$ be the solution of \eqref{eq:SDG_dymcs} for any $4$-tuple $(t,x,Y(\cdot), Z(\cdot) )\in[0,T)\times\mathbb{R}^d\times M(t)\times N(t)$. For any bounded continuous function $\psi$,  and $s\in [\tau,T]$ (deterministic),  it holds that 
\begin{align}
       \label{markovp}
       &E^{t,x}[\psi(X_{t,x}(s),\gamma(s))|\mathcal{F}_{t,\tau}](\pi^{-1}(\omega^{t,\tau},\omega^{\tau, T}))\\
       \nonumber =&E^{\tau,X_{t,x}(\tau)}[\psi(X_{t,x}(s),\tilde{\gamma}(s,\omega^{t,\tau},\omega^{\tau,T}))],\quad
       P_{t,\tau}-a.s. 
\end{align}
\end{lem}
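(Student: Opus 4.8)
The plan is to exploit the product structure of the canonical Wiener--Poisson space together with the path break-up identity \eqref{eq:path_break_up} that has just been established. Under the bijection $\pi$ of Section 2 the probability space $(\Omega_t,\mathcal{F}_{t,\cdot},P_t)$ is identified with the product $(\Omega_{t,\tau}\times\Omega_\tau,\,\mathcal{F}_{t,\cdot}^\tau\otimes\mathcal{F}_{\tau,\cdot},\,P_{t,\tau}\otimes P_\tau)$, and under this identification $\mathcal{F}_{t,\tau}$ coincides, modulo $P_t$-null sets, with the $\sigma$-algebra of events depending only on the first coordinate $\omega^{t,\tau}$. The first step is therefore to record the elementary but essential fact that, on such a product space, conditioning on the first-factor $\sigma$-algebra amounts to freezing $\omega^{t,\tau}$ and integrating the remaining variable against $P_\tau$: for any $P_t$-integrable $\Psi$,
\[
E^{t,x}[\Psi\mid\mathcal{F}_{t,\tau}]\big(\pi^{-1}(\omega^{t,\tau},\omega^{\tau,T})\big)=\int_{\Omega_\tau}\Psi\big(\pi^{-1}(\omega^{t,\tau},\omega')\big)\,dP_\tau(\omega'),\qquad P_{t,\tau}\text{-a.s.}
\]
This is a direct consequence of the definition of conditional expectation and Fubini's theorem: the right-hand side is a function of $\omega^{t,\tau}$ alone, hence $\mathcal{F}_{t,\tau}$-measurable, and by Fubini it integrates correctly against every first-factor event.

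Next I would apply this identity to $\Psi=\psi(X_{t,x}(s),\gamma(s))$. By the very definitions of $\tilde X$ and $\tilde\gamma$ one has $\Psi(\pi^{-1}(\omega^{t,\tau},\omega'))=\psi(\tilde X(s,\omega^{t,\tau},\omega'),\tilde\gamma(s,\omega^{t,\tau},\omega'))$, so that the conditional expectation on the left of \eqref{markovp} equals $\int_{\Omega_\tau}\psi(\tilde X(s,\omega^{t,\tau},\cdot),\tilde\gamma(s,\omega^{t,\tau},\cdot))\,dP_\tau$, a quantity depending on $\omega^{t,\tau}$ alone. The boundedness and continuity of $\psi$, together with the moment bounds on $X_{t,x}$, guarantee integrability and validate the use of Fubini.

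It then remains to recognise this last integral as the expectation appearing on the right of \eqref{markovp}. Here I invoke the path break-up \eqref{eq:path_break_up}: for $P_{t,\tau}$-a.e. $\omega^{t,\tau}$, the process $s\mapsto\tilde X(s,\omega^{t,\tau},\cdot)$ on $(\Omega_\tau,\mathcal{F}_{\tau,\cdot},P_\tau)$ solves \eqref{eq:SDG_dymcs} on $[\tau,T]$ with initial datum $X_{t,x}(\tau)$ and control pair $(Y(\omega^{t,\tau}),Z(\omega^{t,\tau}))(\cdot)$, the latter being admissible by Lemma \ref{lem:control-breaup}. By uniqueness of the solution of \eqref{eq:SDG_dymcs}, $\tilde X(s,\omega^{t,\tau},\cdot)$ coincides $P_\tau$-a.s.\ with the canonical solution restarted at $(\tau,X_{t,x}(\tau))$. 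Consequently the integral over $\Omega_\tau$ is exactly $E^{\tau,X_{t,x}(\tau)}[\psi(X_{t,x}(s),\tilde\gamma(s,\cdot))]$, in which $X_{t,x}$ is understood as the restarted process on $\Omega_\tau$ and the second argument is integrated out; this is precisely \eqref{markovp}.

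The step I expect to require the most care is the first one: matching $\mathcal{F}_{t,\tau}$ with the first-factor $\sigma$-algebra up to null sets and justifying the conditional-expectation-as-integration formula in the presence of the completion and the right-continuous augmentation by $P_t$-null sets. Once this measure-theoretic bookkeeping is settled, the remaining identifications follow mechanically from the definitions of $\tilde X$ and $\tilde\gamma$ and from the uniqueness already used to establish \eqref{eq:path_break_up}.
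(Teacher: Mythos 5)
Your proposal is correct and follows essentially the same route as the paper: both establish the conditional-expectation-as-partial-integration identity on the product space $\Omega_{t,\tau}\times\Omega_\tau$ via measurability in the first factor plus a Fubini verification against first-factor events, and then conclude by applying it to $\Psi=\psi(X_{t,x}(s),\gamma(s))$ and invoking the path break-up identity \eqref{eq:path_break_up} together with uniqueness for \eqref{eq:SDG_dymcs} to recognize the partial integral as $E^{\tau,X_{t,x}(\tau)}[\cdot]$. The measure-theoretic bookkeeping you flag (completion and augmentation) is indeed the only delicate point, and the paper handles it exactly as you outline.
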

\begin{proof}
  For a bounded and measurable function $\varphi$, we have 
\begin{align}
\label{eq:markov1} &E^{P_t}\big[\varphi(\omega)|\mathcal{F}_{t,\tau}\big](\pi^{-1}(\omega^{t,\tau},\omega^{\tau,T}))\\
 \nonumber = & E^{P_{t,\tau}\otimes P_\tau}\big[\varphi(\pi^{-1}(\omega^{t,\tau},\omega^{\tau,T}))|\mathcal{F}_{t,\tau}^\tau\otimes \mathcal{F}_{\tau,\tau}^0\big](\omega^{t,\tau},\omega^{\tau,T}),
\end{align} where $\mathcal{F}_{\tau,\tau}^0$ is the trivial $\sigma$ algebra on the proability space $ (\Omega_\tau, P_\tau)$. Therefore $E^{P_t}\big[\varphi(\omega)|\mathcal{F}_{t,\tau}\big](\pi^{-1}(\omega^{t,\tau},\omega^{\tau,T}))$ is $\mathcal{F}_{t,\tau}^\tau\otimes \mathcal{F}_{\tau,\tau}^0$ measurable. Thereby applying Fubini's theorem we conclude that, for $P_{t,\tau}$-a.e. $\omega^{t,\tau}\in \Omega_{t,\tau}$, the map $\omega^{\tau, T} \mapsto E^{P_t}\big[\varphi(\omega)|\mathcal{F}_{t,\tau}\big]$\linebreak$(\pi^{-1}(\omega^{t,\tau},\omega^{\tau,T})) $ is $\mathcal{F}_{\tau,\tau}^0$ measurable. In other words,  $E^{P_t}\big[\varphi(\omega)|\mathcal{F}_{t,\tau}\big](\pi^{-1}(\omega^{t,\tau},\cdot))$ is a constant function on $\Omega_\tau$ for $P_{t,\tau}$-a.e. $\omega^{t,\tau}\in \Omega_{t,\tau}$. For $A\in \mathcal{F}_{t,\tau}^\tau$ we can write
\begin{align*}
 &\int_{A}E^{P_t}\big[\varphi |\mathcal{F}_{t,\tau}\big](\pi^{-1}(\omega^{t,\tau},\cdot))dP_{t,\tau}\\
= & \int_{\Omega_\tau}\int_{A}E^{P_t}\big[\varphi |\mathcal{F}_{t,\tau}\big](\pi^{-1}(\omega^{t,\tau},\omega^{\tau, T}))dP_{t,\tau}dP_\tau\\
=& \int_{\pi^{-1}(A\times \Omega_{\tau})}E^{P_t}\big[\varphi(\omega) |\mathcal{F}_{t,\tau}\big]dP_t(\omega) \\
=&  \int_{\pi^{-1}(A\times \Omega_{\tau})} \varphi(\omega)dP_t(\omega)\\
=&\int_A\int_{\Omega_\tau}\varphi(\pi^{-1}(\omega^{t,\tau},\omega^{\tau,T}))dP_{\tau}(\omega^{\tau,T})dP_{t,\tau}(\omega^{t, \tau})\\
=& \int_A E^{P_{\tau}}\big[\varphi(\pi^{-1}(\omega^{t,\tau},\omega^{\tau,T}))\big]dP_{t,\tau}(\omega^{t, \tau}).
\end{align*} Therefore, for $P_{t,\tau}$-a.e. $\omega^{t,\tau}\in \Omega_{t,\tau}$, we have 
\begin{align}
 \label{eq:markov2} E^{P_t}\big[\varphi |\mathcal{F}_{t,\tau}\big](\pi^{-1}(\omega^{t,\tau},\omega^{\tau, T})) =  E^{P_{\tau}}\big[\varphi(\pi^{-1}(\omega^{t,\tau},\omega^{\tau,T}))\big].
\end{align} Now define $\varphi(\omega) =\psi(X_{t,x}(s,\omega),\gamma(s,\omega))$, use \eqref{eq:markov2} and invoke the description of $X_{t,x}(\cdot)$ next to Lemma \ref{lem:control-breaup} to conclude, for $P_{t,\tau}$-a.e. $\omega^{t,\tau}\in \Omega_{t,\tau}$, that 
\begin{align*}
 &\quad E^{t,x}[\psi(X_{t,x}(s),\gamma(s))|\mathcal{F}_{t,\tau}](\pi^{-1}(\omega^{t,\tau},\omega^{\tau, T}))\\&=E^{P_t}[\varphi(\omega)|\mathcal{F}_{t,\tau}](\pi^{-1}(\omega^{t,\tau},\omega^{\tau, T}))\\
     &= E^{P_{\tau}}\big[\varphi(\pi^{-1}(\omega^{t,\tau},\omega^{\tau,T}))\big]\\
     & = E^{P_{\tau}}\big[\psi(X_{t,x}(s,\pi^{-1}(\omega^{t,\tau},\omega^{\tau,T})), \tilde{\gamma}(s,\omega^{t,\tau},\omega^{\tau,T})\big]\\
      & = E^{\tau,X_{t,x}(\tau)}[\psi(X_{t,x}(s),\tilde{\gamma}(s,\omega^{t,\tau},\omega^{\tau,T}))]. 
\end{align*} 

\end{proof}

\begin{lem}\label{lem:lip_holder}Let \ref{A1},\ref{A2},\ref{A3} hold. Then
 \begin{itemize}
   \item[a.)] for every $Y\in M(t),  Z\in N(t), \alpha\in\Gamma(t)$ and  $ \beta\in\Delta(t)$,  the pay-off functionals 
    $J (t, \cdot; Y, \beta[Y] )$ and $J(t,\cdot; \alpha[Z], Z)$ are bounded and  Lipschitz continuous in $x$, uniformly in $t, Y, Z,\alpha, \beta$.

   \item[b.)] The value functions $U$ and $V$ are bounded and Lipschitz continuous in $x$.
 \end{itemize}
 \end{lem}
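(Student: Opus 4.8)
The plan is to derive boundedness directly from the structural bounds in \ref{A2}, and to obtain Lipschitz continuity in $x$ by combining a second-moment stability estimate for the state process with the elementary fact that pointwise infima and suprema of a uniformly Lipschitz family remain Lipschitz with the same constant. Boundedness is immediate: since \ref{A2} gives $\sup|f|\le K$ and $\sup|g|\le K$, the definition \eqref{Pay-off} yields $|J(t,x;Y,Z)|\le K(T-t)+K\le K(T+1)$ for every admissible pair, uniformly in all arguments, and passing to infima/suprema preserves this bound, so that $|U|,|V|\le K(T+1)$.

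For the Lipschitz estimate, let $X_{t,x}$ and $X_{t,\hat x}$ solve \eqref{eq:SDG_dymcs} with the \emph{same} control pair and write $\xi=X_{t,x}-X_{t,\hat x}$. I would apply the It\^o formula for jump-diffusions to $|\xi(s)|^2$: the Brownian and compensated-Poisson martingale terms vanish in expectation, and the surviving drift, quadratic-variation, and L\'evy-compensator terms are controlled by the Lipschitz bounds of \ref{A2}. The $b$- and $\sigma$-contributions produce a factor $2K+K^2$, while the jump contribution is handled using $|\eta(s,\cdot;y,z;w)|_1\le K\min(|w|,1)$ together with $\int_{\mathbb{E}}\min(|w|^2,1)\,\nu(dw)<\infty$ from \ref{A3}, yielding a factor $K^2\!\int_{\mathbb{E}}\min(|w|^2,1)\,\nu(dw)$. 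This reduces the problem to
\[
 E^{t,x}\big[|\xi(s)|^2\big]\le|x-\hat x|^2+C\int_t^s E^{t,x}\big[|\xi(r)|^2\big]\,dr,
\]
and Gronwall's inequality closes it with $E^{t,x}\big[|\xi(s)|^2\big]\le e^{CT}|x-\hat x|^2$, the constant $C$ depending only on $K$ and $\nu$. Since $f$ and $g$ are $K$-Lipschitz in $x$, Cauchy--Schwarz then gives
\[
 |J(t,x;Y,Z)-J(t,\hat x;Y,Z)|\le K\Big(\int_t^T E^{t,x}\big[|\xi(s)|\big]\,ds+E^{t,x}\big[|\xi(T)|\big]\Big)\le K(T+1)e^{CT/2}\,|x-\hat x|,
\]
a Lipschitz constant independent of $t,Y,Z$; specializing to the pairs $(Y,\beta[Y])$ and $(\alpha[Z],Z)$ proves part (a).

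For part (b), the point is that the controls $\alpha[Z]$ and $\beta[Y]$ depend only on the opponent's control, not on the initial state, so the \emph{same} control pair is used at $x$ and at $\hat x$ and the uniform estimate from part (a) applies verbatim. Using $|\sup_Z a(Z)-\sup_Z b(Z)|\le\sup_Z|a(Z)-b(Z)|$ and the analogous inequality for infima, one concludes $|U(t,x)-U(t,\hat x)|\le K(T+1)e^{CT/2}|x-\hat x|$, and identically for $V$. The main obstacle is the jump-diffusion moment estimate itself: care is required in applying It\^o's formula with the compensated Poisson integral and in invoking \ref{A3} to keep the L\'evy term finite. Once that estimate is secured, everything else is routine.
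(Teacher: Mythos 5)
Your proof is correct and follows essentially the same route as the paper's: an It\^o--L\'evy formula applied to the squared difference of two solutions driven by the same control pair, the Lipschitz bounds of \ref{A2} together with \ref{A3} to control the drift, diffusion, and jump-compensator terms, Gronwall plus Cauchy--Schwarz to get the first-moment estimate, and then the stability of infima/suprema under uniform Lipschitz bounds to pass from $J$ to $U$ and $V$. Your explicit observation that strategies depend only on the opponent's control and not on the initial point, so the same pair is used at $x$ and $\hat x$, is exactly the (implicit) justification the paper relies on for part (b).
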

    This lemma is a consequence of  Lipschitz continuity and boundedness of the
data, moment estimates for the stochastic processes, and Gronwall's
inequality. The details of the proof will be given in the Appendix. 

 In order to prove the DPP, it does not seem to be possible to replicate the same strategy available for proving DPP for deterministic games. There will be serious measurability issues. We follow \cite{Fleming:1989cg} and work around this problem with a restricted class of strategies for both the players which we name as $r$-strategies, keeping in line with \cite{Fleming:1989cg}.
 \begin{defi}
     An $r-strategy$ $\beta$ for player $II$ on $[t,T]$ is an admissible strategy with the following additional property: For every $\bar{t}<t<T$ and $Y(\cdot)\in M(\bar{t})$, the map $(r,\omega)\longmapsto\beta[Y(\omega^{\bar{t},t})](r,\omega^{t,T})$ is $\mathcal{F}_{\bar{t},\cdot}$-predictable,  where
       $Y(\omega^{\bar{t},t})$ is defined in Lemma \ref{lem:control-breaup}. The set of all $r-strategies$ for player $II$ on [t,T] is denoted by $\Delta_1(t)$.
    \end{defi}The $r-strategies$ for player $I$ are similarly defined on $[t,T]$ and the set is denoted by $\Gamma_1(t)$. We restrict the choices of the players only  to the $r$-strategies and define the $r$-upper value and $r$-lower value of the game as follows:
    
    \vspace{.3cm}
      \begin{defi}[$r$-values]~
 \begin{itemize}
 \item[i.)] The $r$-lower value of the SDG \eqref{eq:SDG_dymcs}-\eqref{Pay-off}
 with initial data $(t,x)$ is given by
 \begin{align}\label{eq:r-upperval}
 U_1(t,x):= \inf_{\alpha \in \Gamma_1(t)}\sup_{Z\in N(t)}J\big(t,x; \alpha[Z], Z\big).
 \end{align}
 \item[ii.)] The $r$-upper value of the game is defined as follows,
\begin{align}\label{eq:r-lowerval}
 V_1(t,x):= \sup_{\beta \in \Delta_1(t)}\inf_{Y\in M(t)}J\big(t,x; Y, \beta[Y]\big).
 \end{align}
 \end{itemize}
 \end{defi}
As a corolllary to Lemma \ref{lem:lip_holder}, one can derive  the following regularity properties of the $r$-value functions. 
 \begin{cor}\label{cor:r-value}~
 \begin{itemize}
   \item[a.)]The $r$-value functions $U_1$ and $V_1$ are bounded and Lipschitz continuous in $x$, uniformly in $t$.
    \item[b.)] For every $(t,x)\in [0,T]\times\mathbb{R}^d$
    \begin{align*}
       U_1(t,x)\ge U(t,x)\quad \text{and}~V_1(t,x)\le V(t,x).
    \end{align*}
 \end{itemize}
 \end{cor}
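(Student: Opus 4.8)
The plan is to obtain both assertions directly from Lemma~\ref{lem:lip_holder}, using only elementary stability properties of infima and suprema; no probabilistic input beyond that Lemma is needed.

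For part (a), I would first isolate the relevant abstract fact: the pointwise supremum (or infimum) of a family of functions sharing a common uniform bound and a common Lipschitz constant is itself bounded and Lipschitz with that same constant. To apply it, fix $t$ and set $h_\alpha(x):=\sup_{Z\in N(t)}J(t,x;\alpha[Z],Z)$. By Lemma~\ref{lem:lip_holder}(a) there is a single constant $K$, independent of $t,\alpha,Z$, such that $|J(t,x;\alpha[Z],Z)|\le K$ and $|J(t,x;\alpha[Z],Z)-J(t,y;\alpha[Z],Z)|\le K|x-y|$. Passing to the supremum over $Z$ preserves both estimates, so each $h_\alpha$ is bounded by $K$ and Lipschitz with constant $K$, uniformly in $\alpha\in\Gamma_1(t)$; passing then to the infimum over $\alpha\in\Gamma_1(t)$ transmits the same two estimates to $U_1(t,x)=\inf_\alpha h_\alpha(x)$, with a constant independent of $t$. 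The argument for $V_1$ is verbatim the same with the roles of $\sup$ and $\inf$ interchanged.

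For part (b), the inequalities follow immediately from the inclusions $\Gamma_1(t)\subseteq\Gamma(t)$ and $\Delta_1(t)\subseteq\Delta(t)$, which hold because, by definition, every $r$-strategy is in particular an admissible strategy. Since an infimum can only increase when the index set shrinks,
\[
U_1(t,x)=\inf_{\alpha\in\Gamma_1(t)}\sup_{Z\in N(t)}J(t,x;\alpha[Z],Z)\ge\inf_{\alpha\in\Gamma(t)}\sup_{Z\in N(t)}J(t,x;\alpha[Z],Z)=U(t,x),
\]
and dually a supremum can only decrease under the same reduction, yielding $V_1(t,x)\le V(t,x)$.

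Because everything reduces to monotonicity of extremal operations under set inclusion together with the stability of boundedness and Lipschitz bounds under such operations, I do not anticipate a genuine obstacle. The only point deserving attention is to confirm that the constant furnished by Lemma~\ref{lem:lip_holder}(a) is truly uniform over strategies, and not merely over the control processes, so that it survives the outer extremal operations defining $U_1$ and $V_1$; this uniformity is exactly what the statement of Lemma~\ref{lem:lip_holder}(a) provides.
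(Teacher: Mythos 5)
Your proposal is correct and takes essentially the same route as the paper: part (a) is exactly the paper's argument (the uniform bound and uniform Lipschitz constant for $J$ furnished by Lemma \ref{lem:lip_holder}(a) pass unchanged through the $\sup_{Z}$ and $\inf_{\alpha}$ operations), and part (b) is the set-inclusion monotonicity $\Gamma_1(t)\subseteq\Gamma(t)$, $\Delta_1(t)\subseteq\Delta(t)$, which is precisely what the paper dismisses as obvious.
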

 \begin{proof}
    The proof of part [$ b.)$] is obvious. The argument for part [$a.)$] is exactly the same as Lemma \ref{lem:lip_holder}.
 \end{proof}
 
 The $r$-value functions do not satisfy the equalities in Theorem \ref{dpp} (dynamic programming principle), each of them instead satisfies an inequality. We have the following theorem.

 \begin{thm}\label{thm:r_dpp}
   Let \ref{A1},\ref{A2},\ref{A3} hold and  $t,\tau\in [0,T]$ be such that $t< \tau$.
   For every $x\in
   \mathbb{R}^d$, we have
   \begin{align}\label{eq:dpp-sub}
 V_1(t,x) \ge \sup_{\beta\in \Delta_1(t)}\inf_{Y\in
    M(t)}E^{t,x}\Big\{\int_{t}^{\tau}f(s, X(s); Y(s), \beta[Y](s))ds+V_1(\tau,X(\tau))\Big\}
   \end{align}
   where $X(\cdot)$ is the solution of \eqref{eq:SDG_dymcs} with
   $Z(\cdot)=\beta[Y](\cdot)$ for $Y(\cdot)\in M(t)$,  and
\begin{align}\label{eq:dpp-sup}
 U_1(t,x) \le\inf_{\alpha\in \Gamma_1(t)}\sup_{Z\in
    N(t)}E^{t,x}\Big\{\int_{t}^{\tau}f(s, X(s); \alpha[Z](s), Z(s))ds+U_1(\tau,X(\tau))\Big\}
   \end{align}
   where $X(\cdot)$ is the solution of \eqref{eq:SDG_dymcs} with
   $Y(\cdot)=\alpha[Z](\cdot)$ for $Z(\cdot)\in N(t)$.
  \end{thm}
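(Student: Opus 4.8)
The plan is to establish the two inequalities by a concatenation-of-strategies argument, exploiting the product structure of the canonical Wiener--Poisson space together with the Markov property and the Lipschitz regularity of the $r$-value functions. I focus on \eqref{eq:dpp-sub}; the proof of \eqref{eq:dpp-sup} is entirely symmetric, with the roles of the two players interchanged and the inequalities reversed. Denote the right-hand side of \eqref{eq:dpp-sub} by $W(t,x)$. Since $V_1(t,x)$ is the supremum over $\beta\in\Delta_1(t)$ of the values $\inf_{Y\in M(t)}J(t,x;Y,\beta[Y])$, it suffices to exhibit, for each $\varepsilon>0$, a single $r$-strategy $\beta^*\in\Delta_1(t)$ on $[t,T]$ whose value is at least $W(t,x)-C\varepsilon$.

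First I would fix a near-optimal first-stage strategy. By the definition of $W(t,x)$ as a supremum, choose $\beta_1\in\Delta_1(t)$ so that, for \emph{every} $Y\in M(t)$,
\[
E^{t,x}\Big\{\int_{t}^{\tau}f(s,X(s);Y(s),\beta_1[Y](s))\,ds+V_1(\tau,X(\tau))\Big\}\ge W(t,x)-\varepsilon.
\]
Next I discretize the state at the intermediate time. Using the Lipschitz continuity of $V_1(\tau,\cdot)$ from Corollary \ref{cor:r-value}, fix $\delta>0$ and a countable Borel partition $\mathbb{R}^d=\bigcup_k B_k$ with $\mathrm{diam}\,B_k<\delta$, pick a representative point $y_k\in B_k$, and, invoking the definition of $V_1(\tau,y_k)$ as a supremum, select $\beta_2^k\in\Delta_1(\tau)$ so that $\inf_{Y'\in M(\tau)}J(\tau,y_k;Y',\beta_2^k[Y'])\ge V_1(\tau,y_k)-\varepsilon$.

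The crucial construction is the measurable concatenation of $\beta_1$ with the family $\{\beta_2^k\}$ across time $\tau$. Given $Y\in M(t)$, I define $\beta^*[Y]$ to coincide with $\beta_1[Y]$ on $[t,\tau]$, and on $[\tau,T]$ to equal $\beta_2^k$ applied to the restricted control $Y(\omega^{t,\tau})(\cdot)$ of Lemma \ref{lem:control-breaup} on the event $\{X(\tau)\in B_k\}$. The identification of $\big(\Omega_t,\mathcal{F}_{t,\cdot},P_t\big)$ with the product $\big(\Omega_{t,\tau}\times\Omega_\tau,\ldots\big)$ through the bijection $\pi$ is exactly what makes this gluing well defined, and the $r$-strategy hypothesis is what guarantees that the non-anticipativity demanded in the definition of an admissible strategy survives the concatenation. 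Verifying that $\beta^*\in\Delta_1(t)$ is the technical heart of the argument, and I expect it to be the main obstacle, precisely because the jump component of the dynamics and the product structure of the Wiener--Poisson space must be handled simultaneously when checking predictability of the spliced response.

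Finally I would estimate the payoff. Splitting $J(t,x;Y,\beta^*[Y])$ at $\tau$ and conditioning on $\mathcal{F}_{t,\tau}$, the tail $E^{t,x}\big[\int_\tau^T f\,ds+g(X(T))\,\big|\,\mathcal{F}_{t,\tau}\big]$ is rewritten, via the Markov property of Lemma \ref{markovprop}, as the pay-off of the second-stage game started from $(\tau,X(\tau))$ under $\beta_2^k$. On $\{X(\tau)\in B_k\}$ this is at least $V_1(\tau,y_k)-\varepsilon\ge V_1(\tau,X(\tau))-C\delta-\varepsilon$ by the choice of $\beta_2^k$ together with the Lipschitz bound on $V_1(\tau,\cdot)$. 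Combining with the first-stage estimate gives $J(t,x;Y,\beta^*[Y])\ge W(t,x)-2\varepsilon-C\delta$ uniformly in $Y\in M(t)$; taking the infimum over $Y$, then the supremum over $\Delta_1(t)$, and letting $\delta,\varepsilon\to0$ yields $V_1(t,x)\ge W(t,x)$, which is \eqref{eq:dpp-sub}.
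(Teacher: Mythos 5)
Your proposal is correct and follows essentially the same route as the paper's proof: a near-optimal first-stage $r$-strategy for the right-hand side, a Borel partition of $\mathbb{R}^d$ at time $\tau$ with near-optimal second-stage strategies $\beta^{\xi_i}\in\Delta_1(\tau)$, measurable concatenation across $\tau$ via the product structure of the Wiener--Poisson space and Lemma \ref{lem:control-breaup}, and then conditioning on $\mathcal{F}_{t,\tau}$ together with Lemma \ref{markovprop} and the Lipschitz bounds of Lemma \ref{lem:lip_holder} and Corollary \ref{cor:r-value}. The only minor imprecision is that passing from near-optimality of $\beta_2^k$ at the representative point $y_k$ to the actual state $X(\tau)$ also requires the Lipschitz continuity of $J(\tau,\cdot;Y,\beta[Y])$ in the state variable (the paper's \eqref{eq:dpp-4}), not just that of $V_1(\tau,\cdot)$, but this only adds another $C\delta$ term that your final estimate already absorbs.
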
 

\begin{proof}
The proofs of \eqref{eq:dpp-sub} and \eqref{eq:dpp-sup} are similar to one another, we only provide the details for \eqref{eq:dpp-sub}. Fix $(t,x)\in Q_T$ and define
  \begin{align}
  \label{eq:dpp-1}W(t,x) = \sup_{\beta\in \Delta_1(t)}\inf_{Y\in
    M(t)}E^{t,x}\Big\{\int_{t}^{\tau}f(s, X(s); Y(s), \beta[Y](s))ds+V_1(\tau,X_\tau)\Big\}.
    \end{align} For every $\epsilon > 0$, then there exists $\beta^\epsilon\in \Delta_1(t)$ such that
  \begin{align}
 \label{eq:dpp-2} W(t,x) \le E^{t,x}\Big\{\int_{t}^{\tau}f(s, X(s); Y(s), \beta^\epsilon[Y](s))ds+V_1(\tau,X_\tau)\Big\}+\epsilon,
  \end{align} for every $Y(\cdot)\in M(t)$. Recall the definition of $V_1(t,x)$ and argue for every $\xi\in\mathbb{R}^d$ that there exists $\beta^\xi\in \Delta_1(\tau)$ such that
 \begin{align}
   \label{eq:dpp-3}V_1(\tau,\xi)\le J\big(\tau,\xi; Y,\beta^\xi[Y]\big)+\epsilon\quad\text{for every}\quad Y\in M(\tau).
  \end{align}Next consider a partition $\big(B_i\big)_{i\in\mathbb{N}}$ of $\mathbb{R}^d$ where $B_i$'s are Borel sets and fix $\xi_i\in B_i$. By Lemma \ref{lem:lip_holder} and Corollary \ref{cor:r-value}, $J$'s and $V_1$ are Lipschitz continuous in $x$ uniformly with respect to other variables. Therefore it is possible to choose the diameter of $B_i$'s small enough such that, for all $ p\in B_i, Y\in M(\tau)$ and $\beta\in \Delta_1(\tau)$,
   \begin{align}
  \label{eq:dpp-4}&|J(\tau,\xi_i; Y, \beta[Y])-J(\tau,p; Y, \beta[Y])|\le \epsilon
  \end{align}and 
  \begin{align}
  \label{eq:dpp-5}&|V_1(\tau,\xi_i)-V_1(\tau, p)|\le \epsilon.
  \end{align}
   We now define a strategy $\delta$ for the player $II$ as follows: for $(r,\omega)\in [t,T]\times \Omega_t$ and $Y\in M(t)$,
  \begin{align*}
       \delta[Y](r)=\begin{cases}
                   \beta^\epsilon[Y](r,\omega) &\text{if}~ r\in[t,\tau]\\
                   \sum_{i\in \mathbb{N}}
                   \chi_{B_i}(X_{t,x}(\tau))\beta^{\xi_i}[Y(\omega^{t,\tau})](r,\omega^{\tau, T})
                   &\text{if} ~r\in(\tau,T],
                   \end{cases}
     \end{align*}where $\pi(\omega)=(\omega^{t,\tau},\omega^{\tau, T})\in\Omega_{t,\tau}\times\Omega_\tau$, $Y(\omega^{t,\tau})(\cdot)\in M(\tau)$ is the $\mathcal{F}_{\tau,\cdot}$-predictable version mentioned in Lemma \ref{lem:control-breaup} and $X(\cdot)$ is the solution of \eqref{eq:SDG_dymcs} with the control pair $(Y, \beta^\epsilon[Y])$. If $Y$ is predictable, then by very definition $\delta[Y](\cdot)$ is also predictable. All other defining properties of an $r$-strategy is built within the definition of $\delta$. In other words, $\delta\in \Delta_1(t)$. 
   For $i \in \mathbb{N}$ such that $X(\tau)\in B_i$, we must have 
        \begin{align}
       \label{eq:dpp-added}V_1(\tau, \xi_i)\le J(\tau, X(\tau);\beta^{\xi_i}[Y(\omega^{t,\tau})], Y(\omega^{t, \tau})) +2\epsilon, 
     \end{align} for all $Y(\cdot)\in M(t)$ and for $P_{t,\tau}$-a.e. $\omega^{t,\tau}\in \Omega^{t,\tau}$. Also
      \begin{align*}
      &J(t,x; Y,\delta[Y])\\=&  E^{t,x}\Big\{\int_t^\tau f(s,X(s); Y(s),\delta[Y](s) )ds\\
      &\hspace{1cm}+\sum_{i\in \mathbb{N}}\chi_{B_i}(X(\tau))\big[\int_\tau^Tf(r,X(r);Y(r),\delta[Y](r))dr+g(X_T)\big]\Big\}\\
     =& E^{t,x}\Big\{\int_t^\tau f(s,X(s); Y(s),\beta^\epsilon[Y](s) )ds\\
    &\hspace{1cm}+\sum_{i\in \mathbb{N}}\chi_{B_i}(X(\tau))E\big[\int_\tau^Tf(r,X(r);Y(r),\delta[Y](r))dr+g(X_T)|\mathcal{F}_{t,\tau}\big]\Big\}\\
    =& E^{t,x}\Big\{\int_t^\tau f(s,X(s); Y(s),\beta^\epsilon[Y](s) )ds\\
      &\hspace{2cm}+\sum_{i\in \mathbb{N}}\chi_{B_i}(X(\tau))J\big(\tau, X(\tau); Y(\omega^{t,\tau}), \beta^{\xi_i}[Y(\omega^{t,\tau})]\big)\Big\}\\
\ge &E^{t,x}\Big\{\int_t^\tau f(s,X(s); Y(s),\beta^\epsilon[Y](s) )ds
      +\sum_{i\in \mathbb{N}}\chi_{B_i}(X(\tau))V_1(\tau,\xi_i)\Big\}-2\epsilon\\
\ge & E^{t,x}\Big\{\int_t^\tau f(s,X(s); Y(s),\beta^\epsilon[Y](s) )ds
      +V_1(\tau, X(\tau))\Big\}-3\epsilon,
      \end{align*} 
 where we have used Lemma \ref{markovprop} to deduce the third equality and the inequalities \eqref{eq:dpp-4}, \eqref{eq:dpp-5} and \eqref{eq:dpp-added} from above for the rest. Finally, invoke \eqref{eq:dpp-2} to conclude 
\begin{align*}
   W(t,x)\le   J(t,x; Y,\delta[Y])+4\epsilon
   \end{align*}
   implying 
\begin{align*}   
W(t,x)\le V_1(t,x)+4\epsilon, 
\end{align*} which leads to the desired conclusion \eqref{eq:dpp-sub} by letting $\epsilon\downarrow 0$.

\end{proof}

 As a corollary to Theorem \ref{thm:r_dpp}, we derive the following H\"{o}lder continuity estimate in time for $U_1$ and $V_1$.
\begin{cor}\label{cor:t-holder}
    There exists a constant $C > 0$, depending on the data, such that
\begin{align*}
    |V_1(t,x)-V_1(s,x)|+ |U_1(t,x)-U_1(s,x)|\le C|t-s|^{\frac 12}
\end{align*} for every $t,s \in [0, T]$ and  $x\in \mathbb{R}^d$.
\end{cor}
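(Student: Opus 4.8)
The estimate is symmetric in the two value functions, so I would focus on $V_1$ and obtain the bound for $U_1$ by interchanging the roles of the players and using \eqref{eq:dpp-sup} in place of \eqref{eq:dpp-sub}. Fix $x\in\mathbb{R}^d$ and $0\le t<\tau\le T$, and let $X(\cdot)$ denote a solution of \eqref{eq:SDG_dymcs} on $[t,\tau]$ driven by an arbitrary admissible control pair. Two analytic inputs drive everything. First, the uniform moment bound
\[
E^{t,x}\big[\,|X(\tau)-x|\,\big]\le C(\tau-t)^{1/2},
\]
valid uniformly over all admissible pairs; this follows exactly as in Lemma \ref{lem:lip_holder} from the boundedness of $b,\sigma,\eta$ in \ref{A2}, the It\^o and L\'evy--It\^o isometries, and \ref{A3}. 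Second, $|f|\le K$, so the running cost over $[t,\tau]$ is at most $K(\tau-t)$ in absolute value. Combined with the spatial Lipschitz continuity of $V_1(\tau,\cdot)$ from Corollary \ref{cor:r-value}, these give, uniformly over admissible pairs,
\[
\big|\,E^{t,x}[V_1(\tau,X(\tau))]-V_1(\tau,x)\,\big|\le C\,E^{t,x}|X(\tau)-x|\le C(\tau-t)^{1/2}.
\]

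For one direction I would invoke the sub-dynamic-programming inequality \eqref{eq:dpp-sub}. For each $\beta\in\Delta_1(t)$ and each $Y\in M(t)$, replacing $V_1(\tau,X(\tau))$ by $V_1(\tau,x)$ with the error controlled as above and bounding the running term by $K(\tau-t)$, the inner expectation is at least $V_1(\tau,x)-C(\tau-t)^{1/2}$ uniformly in $\beta,Y$; taking $\inf_Y$, then $\sup_\beta$, and using \eqref{eq:dpp-sub} yields $V_1(t,x)\ge V_1(\tau,x)-C(\tau-t)^{1/2}$. The analogous manipulation of \eqref{eq:dpp-sup} gives $U_1(t,x)\le U_1(\tau,x)+C(\tau-t)^{1/2}$.

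The opposite inequalities, $V_1(t,x)\le V_1(\tau,x)+C(\tau-t)^{1/2}$ and $U_1(t,x)\ge U_1(\tau,x)-C(\tau-t)^{1/2}$, are the substantive part, and I expect them to be the main obstacle, precisely because \eqref{eq:dpp-sub}--\eqref{eq:dpp-sup} bound $V_1(t,x)$ only from below and $U_1(t,x)$ only from above. To get them I would argue directly from the definition of $V_1$ by a concatenation at time $\tau$, in the same spirit as the proof of Theorem \ref{thm:r_dpp}, but with the roles reversed: fixing $\beta\in\Delta_1(t)$ arbitrarily, the minimizing player plays a fixed control on $[t,\tau]$ and then, on $(\tau,T]$, an $\epsilon$-optimal control response to the continuation of $\beta$. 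Using a Borel partition $(B_i)$ of $\mathbb{R}^d$ of small diameter with representatives $\xi_i$, the cell-wise responses $Y^{\xi_i}\in M(\tau)$ are glued via Lemma \ref{lem:control-breaup}, the Markov property (Lemma \ref{markovprop}) identifies the conditional residual cost from $\tau$ with $J(\tau,X(\tau);\cdot)$, which against this continuation is at most $V_1(\tau,X(\tau))+2\epsilon$, and the same Lipschitz-based discretization estimates used in the proof of Theorem \ref{thm:r_dpp} absorb the error from replacing $X(\tau)$ by $\xi_i$. Since this holds for every $\beta$, taking $\sup_\beta$ gives $V_1(t,x)\le V_1(\tau,x)+C(\tau-t)^{1/2}$, and the symmetric construction for the maximizing player yields the companion bound for $U_1$. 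The genuine difficulty, exactly as in Theorem \ref{thm:r_dpp}, is the measurable gluing of the cell-wise responses into a single $\mathcal{F}_{t,\cdot}$-predictable control and the verification that the concatenation is again admissible; granting this, collecting the four one-sided estimates and letting $\epsilon\downarrow 0$ gives the asserted bound.
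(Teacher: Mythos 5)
Your first half coincides with the paper's proof: the paper also obtains the one-sided bounds (in its notation, $U_1(s,x)-U_1(t,x)\le C\sqrt{|t-s|}$ for $s<t$, which is your $U_1(t,x)\le U_1(\tau,x)+C(\tau-t)^{1/2}$, and the companion bound for $V_1$) exactly as you do, from \eqref{eq:dpp-sup}--\eqref{eq:dpp-sub}, the uniform spatial Lipschitz continuity of Corollary \ref{cor:r-value}, the bound $|f|\le K$, and the first-moment estimate of Lemma \ref{lem:A1}. The gap is in your treatment of the opposite inequalities. You propose to rerun the concatenation of Theorem \ref{thm:r_dpp} ``with the roles reversed,'' gluing cell-wise $\epsilon$-optimal \emph{controls} $Y^{\xi_i}\in M(\tau)$ of the inner (minimizing) player against the continuation of $\beta$. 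But the two situations are not symmetric, and the difficulty is not merely the ``measurable gluing'' you flag. In Theorem \ref{thm:r_dpp} the glued objects are $\epsilon$-optimal \emph{strategies} $\beta^{\xi_i}\in\Delta_1(\tau)$ of the \emph{outer} player: by the sup-inf structure of \eqref{eq:r-lowerval}, such a $\beta^{\xi_i}$ satisfies \eqref{eq:dpp-3} uniformly over \emph{all} opposing controls $Y\in M(\tau)$, and is indexed only by the countably many cells $B_i$. An $\epsilon$-optimal control of the inner player, by contrast, exists only relative to a \emph{given} opposing strategy: there is no single $Y^{\xi_i}$ with $J(\tau,\xi_i;Y^{\xi_i},\beta'[Y^{\xi_i}])\le V_1(\tau,\xi_i)+\epsilon$ for every $\beta'\in\Delta_1(\tau)$, since for fixed $Y$ one only knows $\sup_{\beta'}J\ge V_1$. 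The continuation of your fixed $\beta$ past $\tau$ is a strategy depending on the realized past $\omega^{t,\tau}$, so your cell-wise responses must depend on $\omega^{t,\tau}$ as well; the pasting then requires a measurable selection of admissible controls over the uncountable set $\Omega_{t,\tau}$, a genuinely new obstacle that your proposal does not resolve. This asymmetry is precisely why the inequalities in Theorem \ref{thm:r_dpp} are one-sided in the first place, so appealing to ``the same spirit'' of that proof cannot close the loop.

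The paper handles this direction by a different, selection-free device. With $s<t$, to prove $U_1(s,x)\ge U_1(t,x)-C\sqrt{t-s}$ it extends each $Z\in N(t)$ \emph{backward} to $\tilde Z\in N(s)$ by freezing it on $[s,t]$, and associates to each $\tilde\alpha\in\Gamma_1(s)$ an induced strategy $\alpha\in\Gamma_1(t)$ (via the conditioning of Lemma \ref{lem:control-breaup}); then it compares the two pay-offs directly, writing $J(s,x;\tilde\alpha[\tilde Z],\tilde Z)$ as the cost on $[s,t]$ plus $J(t,X_{s,x}(t);\alpha[Z],Z)$ by Lemma \ref{markovprop}, and using the Lipschitz continuity of $J(t,\cdot;\alpha[Z],Z)$ uniform in the controls (Lemma \ref{lem:lip_holder}) together with Lemma \ref{lem:A1} to replace $X_{s,x}(t)$ by $x$ at cost $C\sqrt{t-s}$. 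Taking $\sup_{Z\in N(t)}$ and then $\inf_{\tilde\alpha\in\Gamma_1(s)}$ yields the bound, with no partition of space, no $\epsilon$-optimal responses, and no pasting at the intermediate time. If you want to repair your argument, this strategy-transfer step (comparing the games on the two horizons through their definitions, rather than decomposing one game at an intermediate time) is what should replace your reversed concatenation.
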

\begin{proof}
   Without loss of generality we may assume $0\le s < t\le T$ and $|t-s|< 1$. It follows as a consequence of \eqref{eq:dpp-sup} that
\begin{align*}
 &U_1(s,x)-U_1(t,x)\\ \le&\inf_{\alpha\in \Gamma_1(t)}\sup_{Z(\cdot)\in
    N(t)}E^{s,x}\Big\{\int_{s}^{t}f(r, X_{s,x}(r); \alpha[Z](r), Z(r))dr+U_1(t,X_{s,x}(t))-U_1(t,x)\Big\}\\
    \le & C\Big(|t-s|+E^{s,x}(|X_{s,x}(t)-x|)\Big),
\end{align*} where we have used the uniform Lipschitz continuity of $U_1(t, x)$ in $x$ and the boundedness of the data. Also,  the usual first-moment estimate (Lemma \ref{lem:A1}) for  SDE \eqref{eq:SDG_dymcs} implies 
\begin{align*}
 E^{s,x}(|X_{s,x}(t)-x|)\Big) \le K \sqrt{|t-s|}.
\end{align*} We combine the above estimates to conclude

\begin{align}
 \label{eq:holder-1} U_1(s,x)-U_1(t,x) \le C \sqrt{|t-s|}.
\end{align}
For any $Z(\cdot)\in N(t)$, we define $\tilde{Z}(\cdot)\in N(s)$ as 

\begin{align*}       
\tilde{Z}(r, \omega)=\begin{cases}
                    Z(t, \omega^{s,t}) & \text{if}~r\in[s,t]\\
                   Z(r, \omega^{s,t})
                   & \text{if}~r\in(t,T].
                   \end{cases}
     \end{align*} For any $\tilde{\alpha}\in\Gamma_1(s)$, we define $\alpha\in \Gamma_1(t)$ as 
\begin{align*}
 \alpha[Z] = \big(\tilde{\alpha}[\tilde{Z}]\big)(\omega^{s,t})\quad\quad(\text{as described in Lemma \ref{lem:control-breaup}}.)
\end{align*} and it is easy to see that $\alpha$ does not depend on $\omega^{s,t}$. For $\tilde{\alpha}\in \Gamma_1(s)$, we now have 

\begin{align*}
 J(s,x;  \tilde{\alpha}(\tilde{Z}), \tilde{Z} ) 
& = E^{s,x}\Big[\int_s^Tf(s,x;  \tilde{\alpha}[\tilde{Z}](r), \tilde{Z}(r) )dr + g(X_{s,x}(T))\Big]\\
& = E^{s,x}\Big[\int_s^tf(s,x;  \tilde{\alpha}[\tilde{Z}](r), \tilde{Z}(r) )dr +J(t,X_{s,x}(t);  \alpha[Z], Z )\Big]\\
& \ge -C\Big(|s-t|+E^{s,x}(|X_{s,x}(t)-x|)+J(t,x;  \alpha[Z], Z )\big),
\end{align*} where we have used Lemma \eqref{lem:lip_holder}. This implies

\begin{align*}
 &\sup_{Z\in N(s)} J(s,x;  \tilde{\alpha}[Z], Z ) \ge -C\big(|s-t|+E^{s,x}(|X_{s,x}(t)-x|)+ U_1(t,x)\\
\text{i.e.}\qquad & U_1(s,x)-U_1(t,x) \ge  -C \sqrt{|t-s|}, 
\end{align*}which, in combination with \eqref{eq:holder-1}, proves the H\"{o}lder continuity estimate for $U_1$. A similar set arguments works for $V_1$ as well.

\end{proof}

The Corollary \ref{cor:t-holder}, along with Lemma \ref{lem:lip_holder}, ensures that $U_1$ and $V_1$ are continuous in $(t,x)$. The inequalities \eqref{eq:dpp-sup} and \eqref{eq:dpp-sub} are referred to as dynamic {\em subprogramming} and {\em superprogramming} principles in the literature. These terminologies are also consistent with the following fact: \eqref{eq:dpp-sup} will imply that $U_1$ is a subsolution to the integro-PDE \eqref{eq:low-IPDE} and \eqref{eq:dpp-sub} will imply that $V_1$ is a supersolution to the integro-PDE \eqref{eq:upp-IPDE}. We prove this fact in the next theorem.

\begin{thm}\label{thm:r-sub-sup-solution}
     The $r$-upper value (resp. {\em lower value}) function $V_1$ (resp. $U_1$) is a viscosity supersolution (resp. {\em subsolution}) of \eqref{eq:low-IPDE}  (resp. \eqref{eq:upp-IPDE}).
   \end{thm}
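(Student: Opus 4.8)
The plan is to prove the viscosity supersolution property for $V_1$; the subsolution claim for $U_1$ follows by a symmetric argument. The strategy is the standard one of deriving the differential inequality from the dynamic superprogramming principle \eqref{eq:dpp-sub}, carried out in the nonlocal (integro-PDE) setting. By Corollary \ref{cor:t-holder} together with Lemma \ref{lem:lip_holder}, $V_1$ is continuous (indeed $C^{\frac12,1}_b$) on $\bar Q_T$, so it is an admissible candidate in Definition \ref{defi:visc}. Fix $(t_0,x_0)\in Q_T$ and a test function $\phi\in C^{1,2}_p(Q_T)$ such that $V_1-\phi$ attains a global minimum at $(t_0,x_0)$; by adding a constant we may assume $V_1(t_0,x_0)=\phi(t_0,x_0)$, so that $V_1\ge\phi$ everywhere. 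The goal is to show
\begin{align*}
  \phi_t(t_0,x_0)+H^+\bigl(t_0,x_0,D\phi(t_0,x_0),D^2\phi(t_0,x_0),\phi(t_0,\cdot)\bigr)\le 0.
\end{align*}
Because $H^+=\inf_y\sup_z[\mathcal{L}+\mathcal{J}]$, it suffices to produce, for each fixed control value $y\in\mathcal{Y}$, a bound coming from player $II$ maximizing; the inf over $y$ is then recovered by choosing the constant control $Y\equiv y$ for player $I$ in \eqref{eq:dpp-sub}.

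First I would fix $y\in\mathcal{Y}$ and use the freezing argument: take the constant control $Y(\cdot)\equiv y\in M(t_0)$ in the superprogramming inequality \eqref{eq:dpp-sub}, so that for some (near-optimal or, after taking sup, genuinely optimal in the inf/sup) strategy $\beta$ of player $II$,
\begin{align*}
  V_1(t_0,x_0)\ge \inf_{Y\in M(t_0)}E^{t_0,x_0}\Bigl\{\int_{t_0}^{\tau}f\bigl(s,X(s);Y(s),\beta[Y](s)\bigr)\,ds+V_1\bigl(\tau,X(\tau)\bigr)\Bigr\}
\end{align*}
for $\tau=t_0+h$ with $h>0$ small. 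Replacing $V_1$ by the test function $\phi$ using $V_1\ge\phi$ and $V_1(t_0,x_0)=\phi(t_0,x_0)$ gives
\begin{align*}
  0\ge \inf_{Y}E^{t_0,x_0}\Bigl\{\int_{t_0}^{t_0+h} f\,ds + \phi\bigl(t_0+h,X(t_0+h)\bigr)-\phi(t_0,x_0)\Bigr\}.
\end{align*}
Now I would apply the Dynkin (Itô) formula for jump-diffusions to $\phi(s,X(s))$ between $t_0$ and $t_0+h$: the drift, diffusion, and compensated-jump terms produce exactly $\phi_t+\mathcal{L}-f$ plus the nonlocal operator $\mathcal{J}$ acting on $\phi$, with the $\sigma\,dW$ and compensated $\tilde N$ integrals having zero expectation under $E^{t_0,x_0}$. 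Dividing by $h$, letting $h\downarrow 0$, and using continuity of the data (\ref{A1}, \ref{A2}) together with right-continuity of the paths so that $X(s)\to x_0$ and $\beta[Y](s)$ sweeps out the relevant control values of player $II$, the expected increment converges to $-[\phi_t+\mathcal{L}(t_0,x_0,D\phi,D^2\phi;y,z)+\mathcal{J}\phi]$ evaluated at the control values actually played. Since player $II$ is maximizing through $\beta$, one extracts the supremum over $z\in\mathcal{Z}$; taking the infimum over the frozen $y$ (realized by player $I$'s free choice of constant control) yields $\phi_t+H^+(\cdots)\le0$, as required.

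The main obstacle is the passage to the limit through the nonlocal term $\mathcal{J}(t,x;y,z)\phi$, and the interchange of the $\inf_Y\sup_z$ structure with the limit $h\downarrow0$. Two points require care. First, the integro-differential term is global in space: applying Dynkin's formula to $\phi(s,X(s))$ legitimately generates $\mathcal{J}\phi(s,X(s))$, but to control it uniformly as $h\to0$ one must use the polynomial growth of $\phi,D\phi$ together with \ref{A2}--\ref{A3}, namely $|\eta|\le K\min(|w|,1)$ and $\int_{\mathbb{E}}\min(|w|^2,1)\,\nu(dw)<\infty$, to dominate $\phi(x+\eta)-\phi(x)-\eta\cdot D\phi(x)$ by $C\,\min(|w|^2,1)$ and thus invoke dominated convergence and the moment estimates of Lemma \ref{lem:lip_holder} (and Lemma \ref{lem:A1}). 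Second, because $\beta$ may produce a control $z$ that varies over the short interval, one should justify that the averaged Hamiltonian converges to the pointwise value at $(t_0,x_0)$; this is handled by the uniform continuity in \ref{A1} and by noting that only an upper bound of the form $\sup_z$ is needed, so that overestimating $\beta[Y](s)$ by the worst-case $z$ is harmless. Once these estimates are in place the argument closes; the symmetric reasoning applied to \eqref{eq:dpp-sup} with the reversed inequality gives that $U_1$ is a subsolution of \eqref{eq:upp-IPDE}, completing the proof.
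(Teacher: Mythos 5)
Your proposal breaks down at its logical core: the way you use \eqref{eq:dpp-sub} is incompatible with its quantifier structure. For a fixed strategy $\beta\in\Delta_1(t_0)$, the superprogramming inequality only says $\inf_{Y\in M(t_0)}E^{t_0,x_0}\{\cdots\}\le V_1(t_0,x_0)$, i.e.\ it guarantees the \emph{existence} of a near-minimizing control $Y^{\epsilon,\beta}$, which is not of your choosing. The step ``take the constant control $Y\equiv y$ in \eqref{eq:dpp-sub}'' is therefore illegitimate: since $E^{t_0,x_0}[\,\cdots;\beta,Y\equiv y\,]\ge\inf_{Y}E^{t_0,x_0}[\,\cdots;\beta,Y\,]$, freezing $Y$ produces an inequality pointing the wrong way, and you cannot deduce $0\ge E^{t_0,x_0}\big\{\int f\,ds+\phi(t_0+h,X(t_0+h))-\phi(t_0,x_0)\big\}$ for the frozen pair $(y,\beta)$. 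Moreover, even if you work with the near-minimizing $Y^{\epsilon,\beta}$ that the infimum does provide, your It\^{o}--L\'{e}vy/limiting computation only shows that $\phi_t+\mathcal{L}+\mathcal{J}\phi$ is small along the particular played pair $(Y^{\epsilon,\beta}(s),\beta[Y^{\epsilon,\beta}](s))$; this controls $\inf_y\inf_z$, not $\inf_y\sup_z$. The sentence ``since player II is maximizing through $\beta$, one extracts the supremum over $z$'' is exactly the unjustified step: the $\beta$ in \eqref{eq:dpp-sub} is in no sense a pointwise maximizer of the Hamiltonian, and nothing converts near-optimality for the game into a bound valid for every $z\in\mathcal{Z}$. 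Indeed, if your scheme worked as announced (a bound for each frozen $y$, then infimum over $y$), it would yield $\phi_t+\sup_{y}\sup_{z}[\mathcal{L}+\mathcal{J}\phi]\le 0$ at the minimum point, which is false in general; this is a symptom of the quantifier error.

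The repair is the paper's argument, which is a proof by contradiction built on a measurable selection implemented as a strategy. Suppose $\phi_t+H^+>\lambda>0$ at $(t_0,x_0)$. Since $H^+=\inf_y\sup_z$, for every $y\in\mathcal{Y}$ there is $z(y)\in\mathcal{Z}$ with $[\phi_t+\mathcal{L}+\mathcal{J}\phi](t_0,x_0;y,z(y))\ge\lambda$; compactness of $\mathcal{Y}$ and the uniform continuity in \ref{A1}--\ref{A2} give a finite cover and hence a Borel map $\varrho:\mathcal{Y}\rightarrow\mathcal{Z}$ with $[\phi_t+\mathcal{L}+\mathcal{J}\phi](t,x;y,\varrho(y))\ge\lambda/3$ for all $y$ and all $(t,x)$ in a $\vartheta$-neighbourhood of $(t_0,x_0)$. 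Then $\beta[Y](r,\omega):=\varrho(Y(r,\omega))$ is an $r$-strategy, and this single $\beta$ defeats \emph{every} control $Y$ simultaneously: combining \eqref{eq:dpp-sub} with the global minimum of $V_1-\phi$, applying the It\^{o}--L\'{e}vy formula, splitting on the event $\{||X(\cdot)-x_0||_\infty^\tau\le\vartheta\}$, and estimating the complementary event via Cauchy--Schwarz, It\^{o} isometry and Doob's inequality, one divides by $\tau-t_0$ and lets $\tau\downarrow t_0$ to contradict the stochastic continuity of $X(\cdot)$. (The paper writes this out for the mirror half, $U_1$ as subsolution of \eqref{eq:upp-IPDE}, with a selection $\varrho:\mathcal{Z}\rightarrow\mathcal{Y}$ and the strategy $\hat{\alpha}[Z]=\varrho(Z)$ fed into \eqref{eq:dpp-sup}.) Your technical remarks --- domination of the nonlocal term by $C\min(|w|^2,1)$, moment estimates, dominated convergence --- are correct and do appear in the paper's proof, but they cannot substitute for this selection-and-contradiction structure; constant-control freezing is suited to inequalities of the opposite type (e.g.\ a subsolution property for $V_1$), which is not what this theorem asserts.
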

  \begin{proof}

 We only prove that $U_1$ is a subsolution to the IPDE \eqref{eq:upp-IPDE}, the proof of $V_1$ being a supersolution is similar. Let $\varphi$ be a test function and $U_1-\varphi$ has a global maximum at $(t_0, x_0)\in [0,T)\times\mathbb{R}^d$. Since the value function $U_1$ is bounded, without any of generality, we may assume that the test function is bounded and has bounded derivatives up to second order. In view of definition \eqref{defi:visc}, we must show 
          \begin{align}
          \label{eq:sup-equality}\partial_t \varphi(t_0,x_0)+H^-\big(t_0, x_0, D\varphi(t_0, x_0), D^2\varphi(t_0, x_0),\varphi(t_0, \cdot)\big)\ge 0.
          \end{align}

 Let us assume the contrary, i.e. \eqref{eq:sup-equality} fails to hold, which means there exists a constant $\lambda > 0$ such that
          \begin{align}
          \label{eq:sup-equality-contra}-\partial_t \varphi(t_0,x_0)-H^-\big(t_0, x_0, D\varphi(t_0, x_0), D^2\varphi(t_0, x_0),\varphi(t_0, \cdot)\big)\ge \lambda > 0.
          \end{align} Set
             \begin{align*}
             \Lambda(t,x;y, z)&= \partial_t\varphi(t,x)+b(t,x; y,z).D\varphi(t,x)\\
              &\quad +\sum_{i,j} a_{ij}(t,x; y,z) \varphi_{x_i x_j}(t,x) + f(t,x; y,z).
             \end{align*}

We rewrite \eqref{eq:sup-equality-contra} as 
             \begin{align*}
              -\max_{z\in\mathcal{Z}}\min_{y\in\mathcal{Y}}\Big( \Lambda(t_0, x_0; y,z)+\mathcal{J}(t_0, x_0; y,z)\varphi\Big)\ge \lambda > 0,
             \end{align*}where the definition of  $\mathcal{J}$ is given immediately next to \eqref{eq:upp-IPDE}-\eqref{eq:low-IPDE}. Therefore, for each $z\in \mathcal{Z}$, there exists $y(z)\in\mathcal{Y}$ such that
             \begin{align}
                \label{eq:dyna_visc_contra}\Lambda(t_0, x_0; y(z),z)+\mathcal{J}(t_0, x_0; y(z),z)\varphi \le -\lambda.
             \end{align}

The continuity of $\Lambda+\mathcal{J}$ in $z$, uniformly in all other variables, implies
             \begin{align*}
             \Lambda(t_0, x_0; y(z),\zeta)+\mathcal{J}(t_0, x_0; y(z),\zeta)\varphi \le -\frac 23\lambda,
             \end{align*} for all $\zeta\in\mathcal{Z}\cap B(z, r)$ and some $r= r(z) > 0$. Obviously, $\big\{B(z, r(z)): z\in\mathcal{Z}\big\}$ is an open cover of $\mathcal{Z}$. The compactness of $\mathcal{Z}$ ensures the existence of a finite subcover. In other words, there are finitely many points $\{z_1, z_2,.....,z_n\}\subset\mathcal{Z}$,  $\{y_1, y_2,.....,y_n\}\subset\mathcal{Y}$ and $r_1,r_2,...r_n > 0$ such that $\mathcal{Z}\subset\bigcup_{i}B(z_i, r_i)$ and $$\Lambda(t_0, x_0; y_i,\zeta)+\mathcal{J}(t_0, x_0; y_i,\zeta)\varphi \le -\frac 23\lambda\quad\text{for}~\zeta\in B(z_i, r_i).$$
             Define $\varrho:\mathcal{Z}\rightarrow \mathcal{Y}$ as follows: $\varrho(z)= y_k$ if $z\in B(z_k, r_k)\backslash \cup_{i=1}^{k-1}B(z_i, r_i)$. Clearly, $\varrho$ is Borel measurable and
              \begin{align*}
              \Lambda(t_0, x_0; \varrho(z),z)+\mathcal{J}(t_0, x_0; \varrho(z),z)\varphi \le -\frac 23\lambda.
              \end{align*}

Now use the continuity of $\mathcal{J}+\Lambda$ in $(t,x)$, uniformly in $z$,  to find a $\vartheta > 0$ such that
                \begin{align}
                \label{eq:visco_contra}\Lambda(t, x; \varrho(z),z)+\mathcal{J}(t, x; \varrho(z),z)\varphi \le -\frac \lambda3
                \end{align}for all $z\in\mathcal{Z}$ and $\max(|t-t_0|, |x-x_0|)\le \vartheta$. The Borel measurable function $\varrho$ could be used to define a strategy $\hat{\alpha}$ for player $I$ on $[t_0, T]$ as 
                \begin{align*}
                \hat{\alpha}[Z](r, \omega)= \varrho(Z(r, \omega)),
                \end{align*}where $Z\in N(t_0)$ and $(r,\omega)\in[t_0,T]\times\Omega_{t_0}$.

 It is easy to see that $\hat{\alpha}$ maps a simple predictable process to a simple predictable process, therefore by standard limiting argument it follows that $\hat{\alpha}$ is an $r$-strategy. For the test function $\varphi$, we have
                 \begin{align*}
                    \varphi(\tau, X(\tau))-\varphi(t_0, x_0) \ge U_1(\tau, X(\tau))-U_1(t_0, x_0).
                 \end{align*} This, along with \eqref{eq:dpp-sup}, implies
      \begin{align}
  \label{eq:visco_dynamic}  \sup_{Z(\cdot)\in
    N(t_0)}E^{t_0,x_0}\Big\{\int_{t_0}^{\tau}f(s, X(s), \hat{\alpha}[Z](s), Z(s))ds+\varphi(\tau,X(\tau))-\varphi(t_0, x_0)\Big\}\ge 0,
     \end{align}where $X(\cdot)$ is the solution of \eqref{eq:SDG_dymcs} with the control pair $(\hat{\alpha}[Z](r), Z(r))$ and initial condition $(t_0, x_0)$. We use It\^{o}-L\'{e}vy formula and conclude
     \begin{align}
    \label{eq:visco_dynamic-1} &\sup_{Z(\cdot)\in
    N(t_0)}E^{t_0,x_0}\Big\{\int_{t_0}^{\tau}\big[\Lambda(r, X(r); \hat{\alpha}[Z](r), Z(r))\\\nonumber&\quad \quad\hspace{3cm}+\mathcal{J}(r, X(r); \hat{\alpha}[Z](r), Z(r))\varphi(r,X(r))\big]dr\Big\}\ge 0.
        \end{align} For $\epsilon > 0$, \eqref{eq:visco_dynamic-1} means that there exists $Z^\epsilon\in N(t_0)$ such that
        \begin{align}
       \label{eq:visco_dynamic-2} &E^{t_0,x_0}\Big\{\int_{t_0}^{\tau}\big[\Lambda(r, X(r); \hat{\alpha}[Z^\epsilon](r), Z^\epsilon(r))\\\nonumber&\quad \quad\hspace{1cm}\quad+\mathcal{J}(r, X(r); \hat{\alpha}[Z^\epsilon](r), Z^\epsilon(r))\varphi(r,X(r))\big]dr\Big\}\ge -\epsilon(\tau-t_0).
        \end{align} 
We indroduce \[G(r) = \Lambda(r, X(r); \hat{\alpha}[Z^\epsilon](r), Z^\epsilon(r))+\mathcal{J}(r, X(r); \hat{\alpha}[Z^\epsilon](r), Z^\epsilon(r))\varphi(r,X(r)), \] and rewrite \eqref{eq:visco_dynamic-2} as 
\begin{align*}
 E^{t_0,x_0}\Big\{\int_{t_0}^{\tau} G(r)\mathbf{1}_{||X(\cdot)-x_0||_{\infty}^\tau> \vartheta}dr+\int_{t_0}^{\tau} G(r)\mathbf{1}_{||X(\cdot)-x_0||_{\infty}^\tau\le \vartheta}dr\Big\} \ge -\epsilon(\tau-t_0).
\end{align*} \footnote{ {\bf Notation}: $||h||_\infty^\tau$ stands for the $L^ \infty$-norm of a function $h(\cdot)$ defined on $[t_0,\tau]$. }This implies, in combination with \eqref{eq:visco_contra}, that

\begin{align}
  \label{eq:visc_revis} E^{t_0,x_0}\big\{\int_{t_0}^{\tau} G(r)\mathbf{1}_{||X(\cdot)-x_0||_{\infty}^\tau> \vartheta}dr\big\}-\frac{\lambda}{3}(\tau-t_0)P_{t_0}\big(||X(\cdot)-x_0||_{\infty}^\tau\le \vartheta\big)\ge -\epsilon(\tau-t_0).
\end{align}
Next, apply Cauchy-Schwartz inequality relative to the measure space $\big([t_0,\tau]\times \Omega_{t_0}\big)$ and get
\begin{align*}
  &E^{t_0,x_0}\big(\int_{t_0}^{\tau} G(r)\mathbf{1}_{||X(\cdot)-x_0||_{\infty}^\tau> \vartheta}dr\big)\\
 \le & \big(E^{t_0,x_0}\int_{t_0}^{\tau}\mathbf{1}_{||X(\cdot)-x_0||_{\infty}^\tau\ge \vartheta}dr\big)^{\frac 12}. \big(E^{t_0,x_0}\int_{t_0}^{\tau}(G(r))^2dr\big)^{\frac{1}{2}}\\
 =& (\tau-t_0)^{\frac 12}\big(P_{t_0}(||X(\cdot)-x_0||_{\infty}^\tau\ge \vartheta\big)^{\frac 12}\big(E^{t_0,x_0}\int_{t_0}^{\tau}(G(r))^2dr\big)^{\frac{1}{2}}.
\end{align*} Since the test function $\varphi$ has bounded derivatives\footnote{ This is the only place where boundedness assumption. It is possible to work without this assumption, in which case we have to use the moment estiamtes for L\'{e}vy driven SDE.}, we must have
\begin{align*}
E^{t_0, x_0}\big( (G(r))^2\big) \le C(x_0,t_0, K, T),
\end{align*} where $C(x_0,t_0, K, T)$ is a constant depending only on $x_0, t_0, K, T$. Therefore \eqref{eq:visc_revis} gives
\begin{align}
 \label{eq:visc_revis_2}  E^{t_0,x_0}\big\{\int_{t_0}^{\tau} G(r)\mathbf{1}_{||X(\cdot)-x_0||_{\infty}^\tau\ge \vartheta}dr\big\} \le C(\tau-t_0)P_{t_0}\big(||X(\cdot)-x_0||_{\infty}^\tau\ge \vartheta\big)^{\frac{1}{2}}.
\end{align}

Next define the Ito-L\'{e}vy process $\xi(s)$  as
          \begin{align*}
            \xi(s)&= \int_{t_0}^{s}\sigma\big(r, X(r); \hat{\alpha}[Z^\epsilon](r),Z^\epsilon(r)\big)dW(r)\\&\quad+\int_{t_0}^s\int_{\mathbb{E}}\eta(r, X(r); \hat{\alpha}[Z^\epsilon](r),Z^\epsilon(r);w)\tilde{N}(dr,dw),
          \end{align*}which means $\xi(s)$ is a martingale. Note that

\begin{align*}
X(s)-x_0= \xi(s)+\int_{t_0}^sb(r, X(r); \hat{\alpha}[Z^\epsilon](r),Z^\epsilon(r))dr.
\end{align*} Hence 

\begin{align*}
  |X(s)-x_0|\le |\xi(s)|+\int_{t_0}^s|b(r, X(r); \hat{\alpha}[Z^\epsilon](r),Z^\epsilon(r))| dr,
\end{align*} which implies 
\begin{align*}
||X(\cdot)-x_0||_\infty^\tau & \le ||\xi(\cdot)||_\infty^\tau + K(\tau-t_0)\\
                                               & \le  ||\xi(\cdot)||_\infty^\tau + \frac{\vartheta}{2}
\end{align*} if 
$|\tau-t_0|< \frac{\vartheta}{2(K+1)}$. In conclusion, if $|\tau-t_0|< \frac{\vartheta}{2(K+1)}$ and $ ||\xi(\cdot)||_\infty^\tau < \frac{\vartheta}{2}$, then $||X(\cdot)-x_0||_\infty^\tau < \vartheta$. In other words, if $|\tau-t_0|< \frac{\vartheta}{2(K+1)}$, we must have 

\begin{align}
\label{eq:revis_visc_3.0} P_{t_0}\big(||X(\cdot)-x_0||_{\infty}^\tau\ge \vartheta\big) \le P_{t_0}\big(||\xi(\cdot)||_{\infty}^\tau \ge \frac{\vartheta}{2}\big).
\end{align}

By It\^{o}-isometry, along with  \ref{A1},\ref{A2} and \ref{A3},
\begin{align*}
 E^{t_0, x_0} \big(|\xi(\tau)|^2\big) = &\int_{t_0}^\tau E^{t_0,x_0}\big(|\sigma\big(r, X(r); \hat{\alpha}[Z^\epsilon](r),Z^\epsilon(r)\big)|^2\big)dr\\
                                    & + \int_{t_0}^\tau \int_{\mathbb{E}}E^{t_0,x_0}(|\eta(r, X(r); \hat{\alpha}[Z^\epsilon](r),Z^\epsilon(r);w)|^2)\nu(dw)dr\\
                                  \le& C(\tau-t_0).
\end{align*}

 Now we use Doob's martingale inequality and conclude
\begin{align}
 \label{eq:revis_visc_3} P_{t_0}\big(||\xi(\cdot)||_{\infty}^\tau \ge \frac{\vartheta}{2}\big)&\le \frac{C}{ \vartheta^2} E^{t_0, x_0} \big(|\xi(\tau)|^2\big)\\
                                 \nonumber                           & \le \frac{C}{ \vartheta^2} (\tau-t_0).
\end{align} 

Now combine the above inequalities \eqref{eq:visc_revis},\eqref{eq:visc_revis_2}, \eqref{eq:revis_visc_3.0},\eqref{eq:revis_visc_3} and divide  throughout by $\tau-t_0$ and get
                \begin{align*}
                  -N^\prime\sqrt{(\tau-t_0)}+\frac \lambda3 P_{t_0}\big(||X(\cdot)-x_0||_\infty^\tau \le \vartheta\Big)\le \epsilon
                \end{align*} for some constant $N^\prime$, which is clearly a contradiction to the stochastic continuity\footnote{See the proof of Lemma \ref{lem:stoc-continuity} to conclude that $X(\cdot)$ is stochastically continuous.} of $X(\cdot)$ for $(\tau-t_0)$ small.  

\end{proof}

The following lemma is a consequence of the above  theorem. 

\begin{lem}\label{lem:3.9}
Let \ref{A1},\ref{A2} and \ref{A3} hold. If $u$ and v are respectively the unique viscosity solutions of \eqref{eq:upp-IPDE}-\eqref{eq:term_cond}  and  \eqref{eq:low-IPDE}-\eqref{eq:term_cond}. Then, for every $(t,x)\in [0,T]\times \mathbb{R}^d$, we have 
\begin{align*}
u(t,x)\ge U_1(t,x)\ge U(t,x)\quad and\quad v(t,x)\le V_1(t,x)\le V(t,x).
\end{align*}
 
\end{lem}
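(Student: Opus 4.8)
The plan is to read off the two easy inequalities from work already done and to obtain the two remaining inequalities by matching terminal data and invoking the comparison principle. Indeed, Corollary \ref{cor:r-value}(b) already records $U_1(t,x)\ge U(t,x)$ and $V_1(t,x)\le V(t,x)$, so these settle the right-hand inequality in each of the two chains. It therefore suffices to prove $u\ge U_1$ and $v\le V_1$.

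First I would pin down the terminal values of $U_1$ and $V_1$. Evaluating the pay-off \eqref{Pay-off} at $t=T$ gives $J(T,x;Y,Z)=g(x)$ for every admissible pair, since the running-cost integral over $[T,T]$ vanishes and the state satisfies $X(T)=x$. Taking $\inf_{\alpha}\sup_{Z}$ (respectively $\sup_{\beta}\inf_{Y}$) then yields $U_1(T,\cdot)=V_1(T,\cdot)=g=u(T,\cdot)=v(T,\cdot)$ as functions on $\mathbb{R}^d$. By Corollary \ref{cor:t-holder} together with Lemma \ref{lem:lip_holder}, both $U_1$ and $V_1$ are jointly continuous on $\bar Q_T$, so this terminal value is attained continuously and there is no inconsistency between the pointwise value at $t=T$ and the continuous extension of the value function.

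Next I would apply the comparison principle. By Theorem \ref{thm:r-sub-sup-solution} the function $U_1$ is a bounded (Corollary \ref{cor:r-value}(a)) viscosity subsolution of the $H^-$-equation \eqref{eq:upp-IPDE}, while $u$, being the unique viscosity solution of \eqref{eq:upp-IPDE}-\eqref{eq:term_cond}, is in particular a bounded supersolution of \eqref{eq:upp-IPDE} with $u(T,\cdot)=g=U_1(T,\cdot)$. Hence Theorem \ref{thm:wellposed} (comparison with $F=H^-$) gives $U_1\le u$ throughout $\bar Q_T$. Symmetrically, $V_1$ is a bounded supersolution of the $H^+$-equation \eqref{eq:low-IPDE} and $v$ is the bounded subsolution of the same equation with matching terminal data, so the comparison principle applied with $F=H^+$ yields $v\le V_1$. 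Concatenating these with the inequalities from Corollary \ref{cor:r-value}(b) produces the full chains $u\ge U_1\ge U$ and $v\le V_1\le V$.

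The only delicate point, and the step I expect to carry the weight of the argument, is verifying that $U_1$ and $V_1$ genuinely qualify as admissible competitors in the comparison principle: they must be bounded and sit in the regularity class for which Theorem \ref{thm:wellposed} holds. Boundedness is immediate from Corollary \ref{cor:r-value}(a), and the required continuity on $\bar Q_T$ follows from combining the spatial Lipschitz bound of Lemma \ref{lem:lip_holder} with the temporal $\tfrac12$-H\"older bound of Corollary \ref{cor:t-holder}. Once this regularity is in hand, the sub/super\-solution properties of Theorem \ref{thm:r-sub-sup-solution} and the comparison step are a direct invocation, and the lemma follows.
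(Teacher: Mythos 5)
Your proof is correct and takes essentially the same route as the paper, whose entire argument is to combine Theorem \ref{thm:r-sub-sup-solution}, the comparison principle of Theorem \ref{thm:wellposed}, and Corollary \ref{cor:r-value}~[$b.)$]. The extra verifications you supply---that $U_1(T,\cdot)=V_1(T,\cdot)=g$ matches the terminal data of $u$ and $v$, and that $U_1$, $V_1$ have the boundedness and continuity required to invoke comparison---are exactly the details the paper leaves implicit.
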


\begin{proof}
The proof is immediate after combining Theorem \ref{thm:r-sub-sup-solution}, the comparison principle (Theorem \ref{thm:wellposed}) and Corollary \ref{cor:r-value} [$b.)$].

\end{proof}

 To complete the proof of Theorem \ref{thm:visc_connection}, it is still required to show $U(t,x)\ge u(t,x)$ and $V(t,x)\le v(t,x)$. Our proof of this requirement closely resembles \cite{Fleming:1989cg,Souganidis:1985sw}, except only the fact that the state processes can have discontinuities and the controls are predictable. The main idea is to approximate the value function with the help of ``piecewise constant'' strategies/controls, and next follows the  description of this methodology.

 Let $\pi = \{0= t_0< t_1<t_2<\cdot\cdot\cdot\quad <t_{n-1}< t_n= T\}$ be a partition of $[0,T]$ and $||\pi||$ be the norm of the this partition defined by $||\pi||=\max_{1\le i\le n}(t_i-t_{i-1})$. The concepts of the $\pi$-admissible strategies and $\pi$-admissible controls are defined as follows:
\begin{defi}[$\pi$-admissible controls]
  A $\pi$-admissible control $Y$ for player $I$ on $[t,T]$ is an admissible control with the following additional property:  If\linebreak$i_0\in\{0,1,.....,n-1\}$ is such that $t\in[t_0, t_{i_0+1}]$, then $Y(s)= y$ for $s\in[t, t_{i_0+1}]$ and $Y(s)= Y_{t_k}$ for $s\in(t_k, t_{k+1}]$ for $k= i_0+1,..., n-1$ where $Y_{t_k}$ is $\mathcal{F}_{t,t_k}$ measurable.
\end{defi}
The set of all $\pi$-admissible controls for player $I$ on $[t,T]$ is denoted by $M_{\pi}(t)$. A $\pi$-admissible control $Z(\cdot)$ for player $II$ is similarly defined and the set of all $\pi$-admissible controls for player $II$ is denoted by $N_\pi(t)$.

\begin{defi}[$\pi$-admissible strategies]
A $\pi$-admissible strategy $\alpha$ for player $I$ on $[t,T]$ is an $\alpha\in \Gamma(t)$ such that $\alpha[N(t)]\subset M_{\pi}(t)$ and the following conditions are satisfied: If $t\in [t_{i_0}, t_{i_0+1})$ then for every $Z(\cdot)\in N(t)$, $\alpha[Z]|_{[t, t_{i_0+1}]}$ is independent of $Z$. Furthermore, if $Z(\cdot)\approx\tilde{Z}(\cdot)$ on $[t, t_k]$ then $\alpha[Z](t_k+)= \alpha[\tilde{Z}](t_k+)$-$P_t$ a.s. for every $k\in \{i_0+1,....,n\}$. The set of all $\pi$-admissible strategies for player $I$ on $[t,T]$ is denoted by $\Gamma_{\pi}(t)$. The $\pi$-admissible strategies for player $II$ are similarly defined and the collection of all such strategies is denoted by $\Delta_{\pi}(t)$. 
\end{defi}

  For every $\psi\in W^{1,\infty}(\mathbb{R}^d)$, $(t,x)\in [0,T)$ and $\tau\in(t,T]$ define
 \begin{align}\label{eq:semigroup}
   S(t,\tau)\psi(x)= \inf_{y\in\mathcal{Y}}\sup_{Z\in N(t,\tau)}E^{t,x}\Big\{\int_{t}^\tau f(s, X(s); y, Z(s))ds+ \psi(X(\tau))\Big\},
 \end{align}where $N(t,\tau)$ is the set of all admissible control for player $II$ and $X(\cdot)$ is the solution of \eqref{eq:SDG_dymcs} with the control pair $(y, Z(\cdot))$  and initial condition $(t,x)$ on $[t,\tau]$. It is easy to see that if $\psi\in W^{1,\infty}$ then $S(t,\tau)\psi\in W^{1,\infty}$. Therefore, given the assumptions \ref{A1},\ref{A2} and \ref{A3},  the function $V_\pi: \mathbb{R}^d\times [0,T]\rightarrow \mathbb{R}$, given by $V_{\pi}(T, x)= g(x)$ and
 \begin{align}
 \label{eq:piecewise} V_{\pi}(t,x)= S(t, t_{i_0+1})\prod_{k=i_0+2}^{n}S(t_{k-1},t_k)g(x) \quad \text{if} ~t\in [t_{i_0}, t_{i_0+1}), 
 \end{align}is well defined. $V_\pi$  also has a stochastic game representation,  the precise form is stated as the following lemma.
   \begin{lem}\label{lem:game_represent}
    For every $(t,x)\in[0,T]\times \mathbb{R}^d$, the function $V_\pi$ has the following form:
    \begin{align}
       \label{eq:game-pi-characterization}V_\pi(t,x)=\sup_{\beta\in\Delta(t)}\inf_{Y\in M_\pi(t)} J(t,x; Y, \beta[Y]).
    \end{align}
   \end{lem}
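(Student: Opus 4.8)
The plan is to argue by backward induction on the number of partition intervals that meet $[t,T]$, that is on $n-i_0$ where $t\in[t_{i_0},t_{i_0+1})$, unwinding the recursive definition \eqref{eq:piecewise} one factor $S(t_{k-1},t_k)$ at a time. The two workhorses are the Markov property of the controlled state (Lemma \ref{markovprop}) together with the product decomposition $\Omega_t\cong\Omega_{t,\tau}\times\Omega_\tau$, and the uniform Lipschitz bounds on the pay-off (Lemma \ref{lem:lip_holder}) and on $V_\pi$ (which lies in $W^{1,\infty}$ because $S(t,\tau)$ preserves $W^{1,\infty}$). The observation that drives everything is that a $\pi$-admissible control $Y\in M_\pi(t)$ is a deterministic constant $y\in\mathcal{Y}$ on the first sub-interval $[t,t_{i_0+1})$; since this constant is revealed to player $II$ instantaneously, a nonanticipative strategy can react to it, and this is exactly the content of the operator $S$ in \eqref{eq:semigroup}.

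For the base case $i_0=n-1$ one has $V_\pi(t,x)=S(t,T)g(x)$, while $M_\pi(t)$ consists only of constant controls, so the representation reduces to the one-step identity
\[
\sup_{\beta\in\Delta(t)}\inf_{y\in\mathcal{Y}}J(t,x;y,\beta[y])
=\inf_{y\in\mathcal{Y}}\sup_{Z\in N(t)}J(t,x;y,Z)=S(t,T)g(x).
\]
The inequality ``$\le$'' is immediate, because for each constant $y$ the control $\beta[y]$ is admissible and hence $J(t,x;y,\beta[y])\le\sup_{Z}J(t,x;y,Z)$, after which one takes $\inf_y$ and then $\sup_\beta$. For ``$\ge$'', fix $\epsilon>0$ and, using the continuity of the data in the control variable (assumption \ref{A1}) and the compactness of $\mathcal{Y}$, cover $\mathcal{Y}$ by finitely many balls and make a Borel-measurable selection $y\mapsto Z^{y}$ with $J(t,x;y,Z^{y})\ge\sup_{Z}J(t,x;y,Z)-\epsilon$, exactly as the map $\varrho$ and the strategy $\hat{\alpha}$ were built in the proof of Theorem \ref{thm:r_dpp}. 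This produces a genuine $\beta\in\Delta(t)$ for which $\inf_y J(t,x;y,\beta[y])\ge S(t,T)g(x)-\epsilon$.

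For the inductive step I would write \eqref{eq:piecewise} as $V_\pi(t,x)=S(t,t_{i_0+1})\big[V_\pi(t_{i_0+1},\cdot)\big](x)$ and apply the induction hypothesis, which represents $V_\pi(t_{i_0+1},\xi)$ as the $\Delta(t_{i_0+1})$-against-$M_\pi(t_{i_0+1})$ game value at $(t_{i_0+1},\xi)$. To prove $V_\pi(t,x)\le\sup_\beta\inf_Y J$, I construct, given $\epsilon>0$, a single strategy $\delta\in\Delta(t)$ that responds near-optimally to the constant $y$ on $[t,t_{i_0+1}]$ (the selection from the base case) and, on $(t_{i_0+1},T]$, switches to induction-hypothesis-optimal tail strategies $\beta^{\xi_i}\in\Delta(t_{i_0+1})$ attached to a fine Borel partition $\{B_i\}$ of $\mathbb{R}^d$ according to the value of $X(t_{i_0+1})$ --- precisely the concatenation through the bijection $\pi$ used to build $\delta$ in the proof of Theorem \ref{thm:r_dpp}. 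Conditioning $J(t,x;Y,\delta[Y])$ on $\mathcal{F}_{t,t_{i_0+1}}$ and invoking Lemma \ref{markovprop} splits the pay-off at $t_{i_0+1}$; the inner term is then replaced by $V_\pi(t_{i_0+1},X(t_{i_0+1}))$ up to $O(\epsilon)$ errors controlled by the Lipschitz estimates, and the outer $S$-step yields $\inf_Y J(t,x;Y,\delta[Y])\ge V_\pi(t,x)-C\epsilon$. The reverse inequality $V_\pi(t,x)\ge\sup_\beta\inf_Y J$ is symmetric: for an arbitrary $\beta\in\Delta(t)$ one uses the induction hypothesis at $t_{i_0+1}$ to select a near-minimizing $\pi$-admissible tail response for player $I$ (again through a Borel partition of $\mathbb{R}^d$ and the representation of controls in Lemma \ref{lem:control-breaup}), prepends the constant $y$ that nearly realizes the $\inf_y$ in $S(t,t_{i_0+1})$, and decomposes the pay-off as before.

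The main obstacle is the concatenation itself: one must verify that the spliced objects --- $\delta$ on the maximizer's side and the composite control on the minimizer's side --- are genuine members of $\Delta(t)$ and $M_\pi(t)$, i.e. nonanticipative, measurable and predictable, and that the pay-off really decomposes across $t_{i_0+1}$. This is exactly where the product structure of the canonical Wiener--Poisson space, the control representation of Lemma \ref{lem:control-breaup}, and the Markov identity of Lemma \ref{markovprop} must be used in concert, while the uniform Lipschitz continuity of $J$ and of $V_\pi$ keeps every $\epsilon$-error uniformly small across the finitely many partition intervals.
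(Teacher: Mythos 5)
Your ``$\le$'' half (that $V_\pi(t,x)\le\sup_{\beta\in\Delta(t)}\inf_{Y\in M_\pi(t)}J(t,x;Y,\beta[Y])$) is essentially sound and parallels the paper: the concatenated strategy $\delta$, built from a near-optimal first-interval response to the constant $y$ and position-indexed tail strategies $\beta^{\xi_i}$ glued over a Borel partition of $\mathbb{R}^d$, is exactly the paper's $\beta_\epsilon$ in \eqref{eq:pi-character-1} (the paper builds it in one shot via the backward recursion $G_n=g$, $G_j=S(t_j,t_{j+1})G_{j+1}$ and a telescoping/conditioning argument rather than by induction, but that difference is cosmetic). The genuine gap is in the ``$\ge$'' half, which you call ``symmetric''. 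It is not symmetric: the infimum runs over \emph{controls} $M_\pi(t)$ while the supremum runs over \emph{strategies} $\Delta(t)$, so here you must produce a control responding to an arbitrary $\beta\in\Delta(t)$. Your plan requires (i) conditioning $\beta$ at $t_{i_0+1}$ on the first-interval history $\omega^{t,t_{i_0+1}}$ to obtain tail strategies in $\Delta(t_{i_0+1})$, and (ii) a selection $\omega^{t,t_{i_0+1}}\mapsto Y'(\omega^{t,t_{i_0+1}})$ of near-minimizing $\pi$-admissible tail controls against those conditioned strategies, measurable enough that the spliced object lies in $M_\pi(t)$. Neither step is available: Lemma \ref{lem:control-breaup} lets one condition \emph{controls}, not strategies, and for an arbitrary Elliott--Kalton strategy the conditioned map need not be a well-defined member of $\Delta(t_{i_0+1})$ (the nonanticipativity null sets depend on the pair of controls being compared, an uncountable family). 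This is precisely why the paper introduces the restricted class $\Delta_1(t)$ of $r$-strategies in Section 3; if arbitrary strategies could be conditioned, that apparatus and the detour through viscosity solutions to prove Theorem \ref{dpp} would be unnecessary. Even granting (i), step (ii) is a measurable selection over the untopologized, infinite-dimensional space of strategies; the Borel partition of $\mathbb{R}^d$ only handles the dependence on the position $X(t_{i_0+1})$, not the dependence on the conditioned strategy.

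The paper closes this direction with an idea absent from your proposal: player $I$'s $\epsilon$-optimal object is not a control at all but a $\pi$-admissible \emph{strategy} $\alpha_\epsilon\in\Gamma_\pi(t)$, built from the $G_j$'s alone (hence independent of $\beta$) and satisfying the pathwise inequality \eqref{eq:pi-equival-4} against \emph{every} $Z\in N(t)$. Then, for an arbitrary $\beta\in\Delta(t)$, the alternating sequence $Y_k=\alpha_\epsilon[Z_{k-1}]$, $Z_k=\beta[Y_k]$ stabilizes interval by interval (because the output of a $\pi$-admissible strategy on $(t_k,t_{k+1}]$ depends only on data up to $t_k$), producing a fixed-point pair with $Y^\epsilon\thickapprox\alpha_\epsilon[Z^\epsilon]$ and $Z^\epsilon\thickapprox\beta[Y^\epsilon]$, whence $\inf_{Y\in M_\pi(t)}J(t,x;Y,\beta[Y])\le J(t,x;\alpha_\epsilon[Z^\epsilon],Z^\epsilon)\le V_\pi(t,x)+\epsilon$. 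Some version of this fixed-point device is what your induction needs in the ``$\ge$'' direction; without it the argument does not go through. (Minor point: the $\varrho$/$\hat{\alpha}$ measurable-selection construction you cite is from the proof of Theorem \ref{thm:r-sub-sup-solution}, not Theorem \ref{thm:r_dpp}.)
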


\begin{proof}
 The main idea behind the proof is the same as for controlled diffusions (no jump)
    \cite[Proposition 2.3]{Fleming:1989cg}. The only exception remains into the fact that we need to take the discontinuities of the sample paths and predictability of the control processes into consideration. Following \cite[Proposition 2.3]{Fleming:1989cg}, the characterization \eqref{eq:game-pi-characterization} is  a consequence of the following fact: for every $(t,x)\in [0,T]\times \mathbb{R}^d$ and for every $\epsilon> 0$, there exists $\alpha_\epsilon\in \Gamma_\pi(t)$ and $\beta_\epsilon\in\Delta(t)$ such that for all $Y\in M_{\pi}(t)$ and $Z\in N(t)$,
\begin{align}
 \label{eq:pi-character-1}  J(t,x;\alpha_\epsilon[Z], Z)-\epsilon \le V_{\pi}(t,x) \le J(t,x;Y,\beta_\epsilon[Y])+\epsilon.
\end{align} The left half of the inequality in \eqref{eq:pi-character-1} implies the $'\le'$ in \eqref{eq:game-pi-characterization}.  To prove the other half of \eqref{eq:game-pi-characterization}, we can make use of the right half of \eqref{eq:pi-character-1} if we show that  for any $\beta\in \Delta(t)$, there exist  $Y^\epsilon\in M_{\pi}(t)$ and $Z^\epsilon\in N(t)$ such that 
\begin{align}
 \label{eq:pi-character-2} J(t,x;\alpha_\epsilon[Z^\epsilon], Z^{\epsilon}) = J(t,x;Y_{\epsilon},\beta[{Y^\epsilon}]). 
\end{align} The controls $Y^\epsilon$ and $Z^\epsilon$ could be defined as follows. Without loss of generality,  we may assume that $t= t_{i_0}$ and $z_0\in Z$. To this end,  we let $Y_{i_0}=\alpha[z_0]$ and $Z_{i_0}= \beta[Y_{i_0}]$ and successively define $Y_k\in M_{\pi}(t), \quad Z_{k}\in N(t)$ for $k= i_0+1,....., n$ as 
\begin{align*}
 Z_k =\beta[Y_k]\qquad \text{and}\qquad Y_k = \alpha_{\epsilon}[Z_{k-1}].
\end{align*} One must show $ Y_{k+1}\approx Y_{k}$ and $Z_{k+1}\approx Z_k$ on $[t_{i_0}, t_k]$ for $k = i_0+1,....., n-1$. We employ the method of induction. For $k=i_0+1$,  the conclusion is immediate from the definition of $\alpha_\epsilon$ and $\beta_\epsilon$ as $\alpha_\epsilon$ is independent of $z$-control on $[t_{i_0}, t_{i_0+1}]$. Next assume  $ Y_{k}\approx Y_{k-1}$ and $Z_{k}\approx Z_{k-1}$ on $[t_{i_0}, t_{k-1}]$. Note that $Y_{k+1} = \alpha_{\epsilon}[Z_{k}]$ and $Y_{k} = \alpha_{\epsilon}[Z_{k-1}]$, and from the definition of $\pi$-admissible strategies we have $Y(t_{k-1}+) = Y(t_{k}+)$. Since $Y_k, Y_{k+1}$ are constants on $(t_{k-1}, t_k]$, we must have $Y_{k+1}\approx Y_{k}$ on $[t_{i_0}, t_k]$. Consequently,  $Z_k =\beta[Y_k]$ and $Z_{k+1} =\beta[Y_{k+1}]$ implies $Z_{k+1}\approx Z_{k}$ on $[t_{i_0}, t_k]$. It is now straightforward to check \eqref{eq:pi-character-2} with $Z^\epsilon = Z_n$ and $Y^\epsilon = Y_n$.

For simplicity we choose $f= 0$. For $G\in W^{1,\infty}(\mathbb{R}^d)$, $y\in Y, ~~t\in[0,T]$ and $\tau\in(t,T]$, let 
\begin{align*}
 \phi(y;t,\tau;x, G) = \sup_{Z\in N(t,\tau)} E^{t,x} G(X(\tau)),
\end{align*}where $X(\cdot)$ is the solution of \eqref{eq:SDG_dymcs} with the control pair $(y, Z)$. In view of the assumptions \ref{A1},\ref{A2} and \ref{A3},  it follows as a consequence that $\phi(\cdot; t,\tau, \cdot,G)\in C_b(\mathbb{R}^d\times \mathcal{Y})\cap W^{1,\infty}(\mathbb{R}^d)$. Furthermore 
\begin{align*}
 S(t,\tau)G = \inf_{y\in\mathcal{Y}} \phi(y;t,\tau;x, G).
\end{align*}
If $t\in [t_{i_0, t_{i_0+1}}]$ for $i_0\in \big\{0,1,....., n-1\big\}$, let $G_n = g, ~G_j = S(t_j, t_{j+1})G_{j+1}$ for $j= i_0+1,....., n-1$ and $G_{i_0} = S(t, t_{i_0+1})G_{i_0+1}$, and thereby $G_{i_0}(x)= V_{\pi}(t,x)$

Next, partition the spaces $\mathbb{R}^d$ and $\mathcal{Y}$  respectively  into Borel sets $\{A_k: k=1,2,3,4,....\} $ and $\{B_\ell: \ell=1,2,..,L\}$ of diameters less than $\delta$, to be specified later.  Choose $x_k\in A_k$ and $y_{\ell}\in B_{\ell}$. Given $\gamma > 0$, we can choose $\delta$ small enough and $y_{kj}^* = y_{\ell(kj)}\in \mathcal{Y}$  for $k=1,....,n$ and $ j = i_0+1,...,n $, so that
\begin{align}
 \label{eq:pi-equival-3.0}\phi(y_{kj}^*;t_{j-1}, t_{j};x_k, G_j) < S(t_{j-1}, t_{j})G_{j}(x_k)+\gamma. 
 \end{align}

Furthermore, pick $Z_{k\ell j}\in N(t_{j-1}, t_j)$ such that,  for the controls $Y_s = y_\ell$ and $Z_{k\ell j}$,
\begin{align}
  \label{eq:pi-equival-3} E^{x_k,t_{j-1}}G_{j}(X^{k\ell j}_{t_j}) > \phi(y_\ell;t_{j-1}, t_j; x_k, G_{j})-\gamma.
\end{align} The superscripts signify the dependence of the solution $X^{k\ell j}$ of \eqref{eq:SDG_dymcs} on the intital data $(t_{j-1}, x_k)$ and the control $y_{\ell}$. We recall the description of canonical probability space to see 
\[E^{P_{t_{j-1}}} \equiv E^{t_{j-1}, x}. \] 

The strategies $\alpha_\epsilon$ and $\beta_\epsilon$ can now defined as 
\begin{align}
\label{eq:pi-strategy}\alpha_\epsilon[Z](r) = \chi_{[t_{i_0}, t_{i_0+1}]}(r)\sum_{k} y_{k i_0}^* \chi_{A_k}(x) + \sum_{j=i_0+1}^{n-1}\chi_{(t_j,t_{j+1}]}(r)\sum_k y_{kj}^* \chi_{A_k}(X(t_j)), 
\end{align} where the process $X(\cdot)$ is defined successively on intervals $[t,t_{i_0+1}],[t_j, t_j+1]$ for $j= i_0+1,...,n-1$ as the solution to \eqref{eq:SDG_dymcs} with $Y = \alpha_{\epsilon}[Z]$. For $Y\in M(t)$, we define

\begin{align}
 \label{eq:strategy-pi-2}\beta_{\epsilon}[Y] =&  \chi_{[t_{i_0}, t_{i_0+1}]}(r) \sum_{k,\ell} \tilde{Z}_{k\ell i_0}(r)\chi_{A_k}(x)\chi_{B_\ell}(Y(r))\\\nonumber&+\sum_{j=i_0+1}^{n-1}\sum_{k,\ell}\chi_{(t_j,t_{j+1}]}(r)\tilde{Z}_{k\ell j}(r)\chi_{A_k}(X(t_j))\chi_{B_\ell}(Y(r)),
\end{align} where $X(\cdot)$ is defined on successive intervals as the solution to \eqref{eq:SDG_dymcs} with $Z = \beta_\epsilon[Y]$ and $\tilde{Z}_{k\ell j}(r,\omega) =Z_{k\ell j}(r,\omega^{t_{j-1,T}})$.

For any $Z\in N(t)$ and $Y=\alpha_\epsilon[Z]$ or $Y\in M_\pi(t)$ and $Z=\beta_\epsilon[Y]$, it now follows that
\begin{align*}
 v_{\pi}(t,x) -J(t,x; \cdot, \cdot) &= G_{i_0}(x)-E^{P_t}(g(X(T)))\\
                                    &= \sum_{j= i_{0}+1}^{m}\big[E^{P_t}\big(G_{j-1}(X(t_{j-1}))\big)-E^{P_t}\big(G_{j}(X(t_{j}))\big)\big]\\
                                    & = E^{P_t}\Big[\sum_{j= i_{0}+1}^{m}\big[G_{j-1}(X(t_{j-1}))- E^{P_t}\big\{G_{j}(X(t_{j}))\big| \mathcal{F}_{t, t_{j-1}}\}\big]\Big],
\end{align*} where $J(t,x; \cdot,\cdot)$ stands for either $J(t,x;\alpha_\epsilon[Z], Z)$ or $J(t,x; Y, \beta_\epsilon[Y])$. To conclude \eqref{eq:pi-character-1} now we only need to show the following:

\begin{itemize}
 \item[] For any $Z \in N(t)$ and $Y= \alpha_\epsilon[Z]$
\begin{align}
 \label{eq:pi-equival-4}G_{j-1}(X(t_{j-1}))\ge E^{P_t}\big[ G(X(t_j))| \mathcal{F}_{t,t_{j-1}}\big]-\epsilon(t_{j}-t_{j-1})\qquad P^{t}-\text{a.s.},
\end{align}
 \item[] and  for any $Y \in M(t)$ and $Y= \beta_\epsilon[Y]$
\begin{align}
 \label{eq:pi-equival-5} E^{P_t}\big[ G(X(t_j))| \mathcal{F}_{t,t_{j-1}}\big]\ge G_{j-1}(X(t_{j-1})) -\epsilon(t_{j}-t_{j-1})\qquad P^{t}-\text{a.s}.
\end{align}
\end{itemize}
 Recall the  Lemma \ref{markovprop} and the discussion preceding it to see that the conditional expectations in \eqref{eq:pi-equival-4} and \eqref{eq:pi-equival-5} could now be considered as expectations with respect to $P_{t_{j-1}}$. Also, $X(\omega^{t,t_{j-1}}, \cdot)$ is a solution of \eqref{eq:SDG_dymcs} with initial condition $\big(t_{j-1}, X(t_{j-1})\big)$ for $P_{t, t_{j-1}}$-a.e $\omega^{t,t_{j-1}}$. Hence, for $X(t_{j-1})\in A_{k}$ and $Y(t_{j-1})\in B_{\ell}$, we have

\begin{align*}
 &\max\big\{ |X(t_{j-1})-x_k|, E^{P_{t_{j-1}}}|X(t_j)-X^{k\ell j}(t_j)|,  |G_{j-1}(X_{t_{j-1}}-G_{j-1}(x_k)|,\\ & \hspace{6cm} |E^{P_{t_{j-1}}}G(X(t_j))-E^{P_{t_{j-1}}}G(X(t_j^{k\ell j}))| \big\} = \mathcal{O}(\delta),
\end{align*} where $\mathcal{O}(\delta) \rightarrow 0$ as $ \delta\rightarrow 0$.  By \eqref{eq:pi-equival-3.0}, we must have 
\begin{align*}
 G_{j-1}(x_k) \ge E^{t,x} G_j(X^{k\ell(jk)j}(t_j))-\gamma
\end{align*} for each $k$. Again by Lemma \ref{lem:control-breaup}, $Z\in N(t)$ gives rise to $Z(\omega^{t,t_{j-1}})( \cdot)\in N(t_{j-1})$ and we conclude

\begin{align*}
 G_{j-1}(X(t_{j-1})) &\ge G(x_k)- \mathcal{O}(\delta)\\
                     &\ge G_j\big(X^{k\ell(jk)j}(t_j)\big)-\gamma-\mathcal{O}(\delta)\\
                     & \ge G_{j}(X(t_j))-\gamma - 2 \mathcal{O}(\delta),
\end{align*} which implies \eqref{eq:pi-equival-4} if  $\gamma+2 \mathcal{O}(\delta) \le \epsilon ||\pi||$. The reasoning for proving \eqref{eq:pi-equival-5} is similar. 
\end{proof}

\begin{lem}\label{lem:pi-continuity}
      Given the assumptions \ref{A1}, \ref{A2} and \ref{A3}, there exists a constant $C$, depending only on the data, such
      that
      \begin{align*}
         |V_\pi(t,x)|\le C\quad \text{and}\quad |V_\pi(t,x)-V_\pi(s, p)|\le C(|x-p|+|t-s|^{\frac 12})
      \end{align*} for all $(t,x), (s, p)\in [0, T]\times \mathbb{R}^d$.
    \end{lem}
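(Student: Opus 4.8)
The plan is to exploit the semigroup structure \eqref{eq:semigroup}--\eqref{eq:piecewise} defining $V_\pi$ together with the stability and moment estimates for \eqref{eq:SDG_dymcs} already used in Lemma~\ref{lem:lip_holder} and Corollary~\ref{cor:t-holder}; throughout, $C$ will denote a constant depending only on the data $d,K,T$ and never on $\pi$. Boundedness is immediate: since $|f|\le K$ and $|g|\le K$ by \ref{A2}, and since both $\inf_y$ and $\sup_Z$ in \eqref{eq:semigroup} are order preserving, one has $\|S(a,b)\psi\|_\infty\le K(b-a)+\|\psi\|_\infty$, so iterating \eqref{eq:piecewise} over the subintervals of $[t,T]$ telescopes the running costs to give $\|V_\pi(t,\cdot)\|_\infty\le K(T-t)+\|g\|_\infty\le K(T+1)$, uniformly in $\pi$.

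For the Lipschitz bound in $x$ I would first record the standard $L^2$-stability estimate for \eqref{eq:SDG_dymcs}: for a fixed control pair, if $X,X'$ start from $x,x'$, then It\^o isometry and the Burkholder--Davis--Gundy inequality for the diffusion and compensated-jump parts, \ref{A1}--\ref{A3}, and Gronwall give $E|X(r)-X'(r)|\le e^{C(r-a)}|x-x'|$. Substituting this into \eqref{eq:semigroup}, using $|f|_1\le K$, $\mathrm{Lip}(\psi)=L$ and the non-expansiveness of $\inf_y\sup_Z$, produces
\[
\mathrm{Lip}\big(S(a,b)\psi\big)\le e^{C(b-a)}L+\tfrac{K}{C}\big(e^{C(b-a)}-1\big).
\]
Writing $L_j$ for the Lipschitz constant of the factor $G_j$ in \eqref{eq:piecewise} (so $L_n=|g|_1\le K$), the substitution $u_j:=L_j+\tfrac KC$ linearizes the recursion to $u_{j-1}\le e^{C(t_j-t_{j-1})}u_j$, whose product telescopes to $u_{i_0}\le e^{C(T-t_{i_0})}u_n\le e^{CT}(K+\tfrac KC)$, a bound free of $\pi$. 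One extra factor $S(t,t_{i_0+1})$ then gives a uniform-in-$\pi$ Lipschitz constant for $V_\pi(t,\cdot)$, which is the spatial half of the claim.

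For the $\tfrac12$-Hölder bound in $t$, fix $s<t$. The guiding identity will be the dynamic programming relation
\[
V_\pi(s,x)=\sup_{\beta\in\Delta(s)}\inf_{Y\in M_\pi(s)}E^{s,x}\Big\{\int_s^t f(r,X(r);Y(r),\beta[Y](r))\,dr+V_\pi(t,X(t))\Big\},
\]
with $X$ solving \eqref{eq:SDG_dymcs} on $[s,t]$. Granting it, I would bound $|V_\pi(s,x)-V_\pi(t,x)|$ exactly as in Corollary~\ref{cor:t-holder}: in each direction freeze a near-optimal or a constant control, then use $|f|\le K$, the Lipschitz bound on $V_\pi(t,\cdot)$ just obtained, and the first-moment estimate $E^{s,x}|X_{s,x}(t)-x|\le K\sqrt{t-s}$ of Lemma~\ref{lem:A1}, to conclude $|V_\pi(s,x)-V_\pi(t,x)|\le K(t-s)+C\sqrt{t-s}\le C\sqrt{t-s}$. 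When $s,t$ lie in one subinterval $[t_{i_0},t_{i_0+1})$ the relation reduces, through $V_\pi(r,\cdot)=S(r,t_{i_0+1})V_\pi(t_{i_0+1},\cdot)$, to the classical dynamic programming principle for the single-controller problem $\phi(y;\cdot,t_{i_0+1};x,\cdot)$ with $y$ frozen, so that case is routine.

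The hard part will be the dynamic programming relation across an \emph{interior} time $t$, namely concatenating a $\pi$-strategy on $[s,t]$ with one on $[t,T]$ and identifying the residual value with $V_\pi(t,\cdot)$; this is the same measurability-sensitive construction as in Theorem~\ref{thm:r_dpp} and Lemma~\ref{lem:game_represent}, and I would carry it out by the split-space/predictable-concatenation argument of Lemma~\ref{lem:game_represent} (invoking Lemma~\ref{lem:control-breaup} and Lemma~\ref{markovprop}). This global step is genuinely necessary, since naively chaining the one-subinterval estimate over the partition points between $s$ and $t$ only yields $\sum_j\sqrt{t_j-t_{j-1}}$, which is not dominated by $\sqrt{t-s}$ uniformly as $\|\pi\|\to0$.
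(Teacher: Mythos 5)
Your boundedness argument and your spatial Lipschitz argument are both fine; the latter (iterating $\mathrm{Lip}(S(a,b)\psi)\le e^{C(b-a)}\mathrm{Lip}(\psi)+\tfrac KC(e^{C(b-a)}-1)$ across the partition) is a valid alternative to the paper's route, which instead invokes the game representation \eqref{eq:game-pi-characterization} together with the uniform Lipschitz bound on the pay-offs from Lemma \ref{lem:lip_holder}. The time-regularity part, however, rests on a false identity. The dynamic programming relation you propose to ``grant'',
\begin{align*}
V_\pi(s,x)=\sup_{\beta\in\Delta(s)}\inf_{Y\in M_\pi(s)}E^{s,x}\Big\{\int_s^t f\,dr+V_\pi(t,X(t))\Big\},
\end{align*}
fails whenever $t$ is not a partition point, and it fails precisely in the case you declare routine ($s,t$ in one subinterval). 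Take $d=1$, $\sigma=\eta=f\equiv 0$, $\mathcal{Z}$ a singleton (so player $II$ and the $\sup_\beta$ are vacuous), $\mathcal{Y}=\{-1,+1\}$, $b(t,x;y,z)=y$, $g(x)=\min(|x|,1)$, $T=2$, $\pi=\{0,2\}$, $s=0$, $t=1$, $x=0$. Every $Y\in M_\pi(0)$ is a single deterministic constant $y$ on all of $[0,2]$, so $V_\pi(0,0)=S(0,2)g(0)=\min_{y=\pm1}g(2y)=1$; but $V_\pi(1,\xi)=S(1,2)g(\xi)=\min_{y=\pm1}g(\xi+y)$ gives $V_\pi(1,\pm1)=0$, so the right-hand side above equals $\min_{y=\pm1}V_\pi(1,y)=0\neq 1$. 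The obstruction is structural: a $\pi$-admissible control is locked to one value of $y$ on the partition interval containing $t$, while the continuation value $V_\pi(t,\cdot)$ lets player $I$ re-optimize $y$ at time $t$; no concatenation along the lines of Lemma \ref{lem:game_represent} can repair this, because the pasted control would have to switch at a non-partition time. (Only the one-sided ``super-programming'' inequality $\ge$ survives, since conditioning an $M_\pi(s)$-control at $t$ does produce an $M_\pi(t)$-control; the reverse inequality, hence the equality, is false.) In particular, the classical frozen-$y$ DPP you appeal to has continuation value $\sup_{Z}E^{t,\xi}\{\int_t^\tau f(\cdot;y,Z)\,dr+\psi(X(\tau))\}$ with the \emph{same} $y$, not $V_\pi(t,\xi)$, which re-minimizes over $y$ --- exactly the discrepancy in the counterexample.

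The estimate itself is true, and the correct organization --- which is what the paper's sketch ``mimic Corollary \ref{cor:t-holder} after invoking the game representation'' amounts to --- never uses a DPP at interior times. First, for $s<t\le\tau$ in one partition interval, compare the two frozen-$y$ problems directly: using $|\inf_yA(y)-\inf_yB(y)|\le\sup_y|A(y)-B(y)|$, conditioning at $t$ (Lemmas \ref{lem:control-breaup} and \ref{markovprop}), Lemma \ref{lem:A1} and Gronwall, one gets $|S(s,\tau)\psi(x)-S(t,\tau)\psi(x)|\le C(1+|\psi|_1)\sqrt{t-s}$; both directions are unproblematic here because the continuation problem keeps the same frozen $y$. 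Second, between partition points $t_k<t_l$, \eqref{eq:piecewise} says $V_\pi(t_k,\cdot)$ is exactly the $\pi$-value of the game on $[t_k,t_l]$ with terminal cost $V_\pi(t_l,\cdot)$, so Lemma \ref{lem:game_represent} and the two-sided argument of Corollary \ref{cor:t-holder} apply verbatim (all concatenations now switch at partition points, hence stay inside $M_\pi$ and $\Delta$), giving $|V_\pi(t_k,x)-V_\pi(t_l,x)|\le C\sqrt{t_l-t_k}$ with $C$ uniform in $\pi$ thanks to your spatial Lipschitz bound. Finally, for arbitrary $s<t$, split $[s,t]$ at the first partition point $\ge s$ and the last partition point $\le t$: this produces at most three increments, each over a time gap of length $\le t-s$, so the total is $\le 3C\sqrt{t-s}$. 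This three-piece split --- not an interior-time DPP, and not a chaining over all subintervals --- is the idea your proposal is missing; it also disposes of your $\sum_j\sqrt{t_j-t_{j-1}}$ worry.
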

    \begin{proof}
     The boundedness of $V_\pi$ is immediate. The Lipschitz continuity of $V_\pi$ in $x$ follows from the
      Lipschitz continuity of $J(\cdot\cdot\cdot\cdot)$'s, which is a consequence of Lipschitz continuity of the data.
      The argument is essentially the same as in Lemma \ref{lem:lip_holder}. The H\"{o}lder continuity estimate in $t$ follows by suitably mimicking the steps of Corollary \ref{cor:t-holder} after we invoke the stochastic game representation \eqref{eq:game-pi-characterization} of the $\pi$-value function $V_\pi$. We leave any further details of the proof to the interested readers.

\end{proof}

   Thanks to Arzela-Ascoli  theorem, in view of Lemma \ref{lem:pi-continuity}, the family $(V_\pi)_{\pi}$ will have locally uniformly convergent
     subsequences as $||\pi||\rightarrow 0$. In fact, it would not be difficult to show that any convergent subsequence of $V_\pi$, as
      $||\pi||\rightarrow 0$, will converge to the viscosity solution \eqref{eq:low-IPDE}-\eqref{eq:term_cond} and the uniqueness
       of the viscosity solution would imply that $\lim_{||\pi||\rightarrow 0}V_\pi$ exists.

    \begin{thm}\label{thm:pi-viscosity}
       Assume \ref{A1},\ref{A2}, \ref{A3} and $V_\pi$ is given by \eqref{eq:piecewise}. Then the limit
        $v=\lim_{||\pi||\rightarrow 0}V_\pi$ exists and it is the unique viscosity solution of
         \eqref{eq:low-IPDE}-\eqref{eq:term_cond}.
    \end{thm}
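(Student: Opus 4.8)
The plan is to combine a compactness argument with a consistency (infinitesimal--generator) computation for the one--step operator $S$, and then close with uniqueness. First I would record that, by Lemma \ref{lem:pi-continuity}, the family $(V_\pi)_\pi$ is uniformly bounded and equi--H\"{o}lder in $(t,x)$, so by the Arzel\`a--Ascoli theorem every sequence $\|\pi_n\|\to 0$ admits a locally uniformly convergent subsequence; any subsequential limit $v$ inherits the same bounds, lies in $C_b^{\frac12,1}(\bar{Q}_T)$, and since $V_\pi(T,\cdot)=g$ for every $\pi$, satisfies $v(T,\cdot)=g$, i.e. \eqref{eq:term_cond}. It then suffices to prove that every such limit $v$ is a viscosity solution of \eqref{eq:low-IPDE}: uniqueness (Theorem \ref{thm:wellposed}) forces all subsequential limits to coincide, which upgrades subsequential convergence to convergence of the whole family and identifies the limit with the unique solution.

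The core step is the viscosity verification. The structural fact I would exploit is the semigroup identity built into \eqref{eq:piecewise}: for $t\in[t_{i_0},t_{i_0+1})$ and $\tau=t_{i_0+1}$ one has $V_\pi(t,x)=S(t,\tau)V_\pi(\tau,\cdot)(x)$. Fix $\phi\in C_p^{1,2}(Q_T)$; since $v$ is bounded I may truncate so that $\phi$ and its derivatives up to second order are bounded. Suppose $v-\phi$ has a strict global maximum (resp. minimum) at $(t_0,x_0)\in Q_T$. Local uniform convergence produces points $(t_n,x_n)\to(t_0,x_0)$ at which $V_{\pi_n}-\phi$ attains its global maximum (resp. minimum), with value $m_n\to 0$. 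Taking $\tau_n=t^n_{i_0+1}$, so that $0<\tau_n-t_n\le\|\pi_n\|\to 0$, and using that $S(t_n,\tau_n)$ is monotone and commutes with additive constants, I may replace $V_{\pi_n}(\tau_n,\cdot)$ by $\phi(\tau_n,\cdot)+m_n$ in the identity and cancel $m_n$ to obtain, in the maximum case,
\[
\phi(t_n,x_n)\le \inf_{y\in\mathcal{Y}}\sup_{Z\in N(t_n,\tau_n)}E^{t_n,x_n}\Big[\int_{t_n}^{\tau_n} f(s,X(s);y,Z(s))\,ds+\phi(\tau_n,X(\tau_n))\Big],
\]
with the reverse inequality in the minimum case. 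Applying the It\^{o}--L\'{e}vy formula to $\phi(\cdot,X(\cdot))$ and recalling $\mathcal{L}=\mathrm{Tr}(a\,D^2\phi)+b\cdot D\phi+f$, I would subtract $\phi(t_n,x_n)$ and divide by $\tau_n-t_n$ to turn the right--hand side into a time--averaged expectation of $\phi_t+\mathcal{L}\phi+\mathcal{J}\phi$ along $X$; in the limit $n\to\infty$ the target is
\[
\phi_t(t_0,x_0)+H^+\big(t_0,x_0,D\phi(t_0,x_0),D^2\phi(t_0,x_0),\phi(t_0,\cdot)\big)\ge 0
\]
in the maximum case and $\le 0$ in the minimum case, which are exactly the sub-- and supersolution inequalities of Definition \ref{defi:visc} for \eqref{eq:low-IPDE}.

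The hard part is the consistency passage to the limit, i.e. showing that the infinitesimal generator of $S$ is precisely $H^+=\inf_y\sup_z[\mathcal{L}+\mathcal{J}]$. Two difficulties arise at once: the $\inf_y\sup_Z$ is taken over process--valued controls rather than over the compact sets $\mathcal{Y},\mathcal{Z}$, and the nonlocal term $\mathcal{J}$ together with the jumps of $X$ obstructs a naive freezing of coefficients at $(t_0,x_0)$. I would resolve both exactly as in the proof of Theorem \ref{thm:r-sub-sup-solution}: exploit the uniform continuity of $(t,x,z)\mapsto \phi_t+\mathcal{L}\phi+\mathcal{J}\phi$ (guaranteed by \ref{A1}--\ref{A3} and the boundedness of $\phi$) together with the compactness of $\mathcal{Z}$ and a Borel measurable--selection argument to replace, up to $O(\vartheta)$, the process controls by frozen controls $(y,z)\in\mathcal{Y}\times\mathcal{Z}$; and control the error on the event $\{\|X(\cdot)-x_n\|_\infty^{\tau_n}>\vartheta\}$ by a Cauchy--Schwarz splitting followed by Doob's martingale inequality and the It\^{o} isometry, using the stochastic continuity of $X$ to render this event negligible as $\tau_n-t_n\to 0$. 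Carrying this out uniformly in $y$ and over $\sup_Z$ is what legitimizes interchanging the infinitesimal limit with $\inf_y\sup_Z$ and collapsing it to $\inf_y\sup_z$, yielding $H^+$ in the limit. Once $v$ is known to solve \eqref{eq:low-IPDE}--\eqref{eq:term_cond}, uniqueness from Theorem \ref{thm:wellposed} identifies all subsequential limits, so $\lim_{\|\pi\|\to 0}V_\pi$ exists and equals the unique viscosity solution, completing the proof.
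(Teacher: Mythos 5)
Your proposal is correct and follows essentially the same route as the paper: Arzel\`{a}--Ascoli compactness via Lemma \ref{lem:pi-continuity}, the semigroup identity from \eqref{eq:piecewise} combined with monotonicity of $S(t,\tau)$ at a maximum point of $V_\pi-\varphi$, the consistency fact $\lim_{s\downarrow t}\frac{S(t,s)\varphi-\varphi}{s-t}=H^+$ obtained from the It\^{o}--L\'{e}vy formula, and uniqueness from Theorem \ref{thm:wellposed} to identify all subsequential limits and upgrade to convergence of the whole family. The only difference is one of detail: the paper dismisses the generator-consistency step as a direct consequence of the It\^{o}--L\'{e}vy formula, whereas you spell out how to justify it (measurable selection, stochastic continuity, Cauchy--Schwarz and Doob's inequality, as in the proof of Theorem \ref{thm:r-sub-sup-solution}), which is an elaboration of the same argument rather than a different approach.
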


 \begin{proof}
In view of the preceding remark, we only need to show that any subsequential limit of $(V_\pi)_\pi$ as $||\pi||\rightarrow 0$ converges to the viscosity solution of \eqref{eq:low-IPDE}. Let $v$ be a locally uniform limit of a subsequence of the family $(V_\pi)_\pi$. We will show that $v$ is a subsolution of
 \eqref{eq:low-IPDE}, and the proof of $v$ being a supersolution is similar. The argument is classical in viscosity solution methods. If $\varphi$ is a test function and $v-\varphi$ has a strict global maximum at $(t_0, x_0)$, we wish to show that
\begin{align*}
 \varphi_t(t_0, x_0) + H^+(t_0, x_0, D\varphi, D^2\varphi, \varphi(t_0,\cdot)) \ge 0 .
\end{align*} Since the subsequence $V_\pi\rightarrow v$ locally uniformly as $||\pi||\rightarrow 0$, there exists $(t_\pi, x_\pi)$ such that $(t_\pi, x_\pi)\rightarrow (t_0, x_0)$ as $||\pi||\rightarrow 0$ and $(t_\pi, x_\pi)$ is a gobal maximum of $V_\pi-\varphi$. At the same time, if $t_\pi\in [t_{i_0}^\pi, t_{i_0+1}^\pi)$, then \eqref{eq:piecewise} gives $V_\pi(t_\pi, x_\pi) = S(t_\pi, t_{i_{0}+1}^\pi)V_\pi(t_{i_0+1}^\pi,\cdot)(x_\pi)$. Therefore

\begin{align*}
  \varphi(t_\pi, x_\pi) \le  S(t_\pi, t_{i_{0}+1}^\pi)\varphi( t_{i_0+1}^\pi,\cdot)(x_\pi).
\end{align*} The rest of the argument is now trivial, once we notice the following fact: For any test function $\varphi$,
 \begin{align*}
      \lim_{s\downarrow t}\frac{S(t,s)\varphi(\cdot)-\varphi(\cdot)}{s-t} = H^+(t,\cdot,D\varphi, D^2\varphi, \varphi(\cdot))
      \end{align*} holds as a consequence of It\^{o}-L\^{e}vy formula.

\end{proof}

We now piece together above results to conclude Theorem \ref{dpp} and Theorem \ref{thm:visc_connection}.

\begin{proof}[Proof of Theorem \ref{thm:visc_connection} ] By Lemma \ref{lem:game_represent}, we have 
\begin{align*}
V_\pi(t,x) \ge V(t,x)
\end{align*} for all $(t,x)\in [0,T]\times \mathbb{R}^d$ and any partition $\pi$ of $[0,T]$. Now we pass to the limit $||\pi||\rightarrow 0$ and invoke Theorem \ref{thm:pi-viscosity} to conclude $v\ge V$. The other half of the equality has already been obtained in Lemma \ref{lem:3.9}, therefore $V=v$. A similar line of arguments could be employed to conclude $U=u$.

\end{proof}
\begin{rem}
We can use Theorem \ref{thm:visc_connection} to conclude from Theorem \ref{thm:wellposed} that the value functions are H\"{o}lder continuous in time.
\end{rem}

\begin{proof}[Proof of Theorem \ref{dpp} ]
 It is sufficient to argue for \eqref{dpp-low}, proof of \eqref{dpp-up} is similar. Fix $\tau\in (0,T]$ and denote the right hand side of \eqref{dpp-low} by $V^*(t,x)$ for $(t,x)\in [0,\tau)\times \mathbb{R}^d$. Without any difficulty, in \ref{dpp-low},  it is enough to consider the controls $Y$'s and the strategies $\beta$'s
 defined on $[t,\tau]$, in place of the full interval $[t,T]$. Then by Theorem \ref{thm:visc_connection}, $V^*$ is the viscosity solution of \eqref{eq:low-IPDE} in $[0,\tau] $ with $V^*(\tau, x)= V(\tau,x)$. On the other hand,  $V(t,x)$ is the viscosity solution of the same problem. We conclude by uniqueness that $V=V^*$, and thereby proving the theorem. \end{proof}

\section{Proof of the verification theorem.}
      We begin this section with some essential technicalities  related to Lebesgue points of a measurable function with values
     in a Banach space. Some of these
      facts are discussed in \cite{Gozzi:2005bd}, rest are added here to tackle the additional  subtleties due to the nonlocal nature of Isaacs equation. 

   \begin{defi}\label{def:lebsg}
    Let $\mathbb{B}$ be a Banach space and $\gamma:[a,b]\rightarrow \mathbb{B}$ be measurable and Bochner
    integrable at the same time. A point $s\in [a, b]$ is said to be a right Lebesgue point of $\gamma$ if
    \begin{align*}
     \lim_{h\downarrow 0} \frac{1}{h} \int_{s}^{s+h} |\gamma(r)-\gamma(t)|_{\mathbb{B}}~dr = 0.
    \end{align*}
   \end{defi}The following result holds (see \cite{Diestel:1977hd} for the proof).
      \begin{lem}\label{lem:lbpts-measure}
      Let $\gamma:[a,b]\rightarrow \mathbb{B}$ be measurable and Bochner
    integrable. Then almost every point in $[a,b]$ is a right Lebesgue point of $\gamma$.
      \end{lem}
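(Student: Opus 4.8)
The plan is to reduce this vector-valued statement to the classical scalar Lebesgue differentiation theorem, the crucial enabling step being the essential separability of Bochner integrable functions. First I would invoke the Pettis measurability theorem: since $\gamma$ is measurable and Bochner integrable, it is essentially separably valued, so there exist a null set $Z\subset[a,b]$ and a separable closed subspace $\mathbb{B}_0\subseteq\mathbb{B}$ with $\gamma(s)\in\mathbb{B}_0$ for every $s\in[a,b]\setminus Z$. Fixing a countable dense subset $\{v_n\}_{n\ge 1}$ of $\mathbb{B}_0$ then converts the problem into countably many scalar statements that can be intersected.

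For each fixed $n$ the scalar function $r\mapsto |\gamma(r)-v_n|_\mathbb{B}$ is measurable and integrable on $[a,b]$, since $|\gamma(r)-v_n|_\mathbb{B}\le |\gamma(r)|_\mathbb{B}+|v_n|_\mathbb{B}$ and $|\gamma(\cdot)|_\mathbb{B}\in L^1$. The one-sided scalar Lebesgue differentiation theorem then furnishes a full-measure set $E_n\subseteq[a,b]$ on which
\begin{align*}
\lim_{h\downarrow 0}\frac 1h\int_s^{s+h}|\gamma(r)-v_n|_\mathbb{B}\,dr = |\gamma(s)-v_n|_\mathbb{B}, \qquad s\in E_n.
\end{align*}
Setting $E=\big(\bigcap_{n\ge 1}E_n\big)\setminus Z$, the set $E$ has full measure, being a countable intersection of full-measure sets with a null set removed; this is precisely where the countability provided by Pettis' theorem is used.

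It then remains to check that every $s\in E$ is a right Lebesgue point. Given such an $s$ and $\epsilon>0$, density lets me pick $v_n$ with $|\gamma(s)-v_n|_\mathbb{B}<\epsilon$, and the triangle inequality gives
\begin{align*}
\frac 1h\int_s^{s+h}|\gamma(r)-\gamma(s)|_\mathbb{B}\,dr\le \frac 1h\int_s^{s+h}|\gamma(r)-v_n|_\mathbb{B}\,dr + |v_n-\gamma(s)|_\mathbb{B}.
\end{align*}
Passing to $\limsup_{h\downarrow 0}$ and using $s\in E_n$ yields
\begin{align*}
\limsup_{h\downarrow 0}\frac 1h\int_s^{s+h}|\gamma(r)-\gamma(s)|_\mathbb{B}\,dr\le 2|\gamma(s)-v_n|_\mathbb{B}<2\epsilon,
\end{align*}
and letting $\epsilon\downarrow 0$ forces the limit to be zero. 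The main obstacle is exactly the separability reduction at the start: without passing to $\mathbb{B}_0$ one would need a norm-dense subset of the (possibly nonseparable) range, which need not be countable, and countability is what permits the almost-everywhere conclusions for the individual scalar functions $r\mapsto|\gamma(r)-v_n|_\mathbb{B}$ to be combined into a single full-measure set $E$. Everything else is a routine triangle-inequality estimate.
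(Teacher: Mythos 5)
Your proof is correct. Note that the paper does not prove this lemma at all: it simply cites Diestel and Uhl, \emph{Vector Measures}, for the result. Your argument --- reduce to a separable subspace $\mathbb{B}_0$ via essential separability of strongly measurable functions (Pettis), apply the scalar one-sided Lebesgue differentiation theorem to the countably many integrable functions $r\mapsto|\gamma(r)-v_n|_{\mathbb{B}}$, intersect the full-measure sets, and finish with the density/triangle-inequality estimate --- is precisely the classical proof found in that reference, so there is nothing substantive to compare; the argument is complete as written.
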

   From now onward we drop the subscript $t$ from $(\Omega_t, P_t)$  and simply write $(\Omega, P)$.  For a positive integer $n$, the next lemma ensures that any member of
  $L_{\mathcal{F}_{t,\cdot}}^1(t,T;\mathbb{R}^n)$ can be thought of as Bochner integrable when viewed as a map
  from $[t,T]$ to $L^1(\Omega; \mathbb{R}^n)$.
  \begin{lem}\label{lem:bochner}
   Any $\gamma\in L_{\mathcal{F}_{t,\cdot}}^1(t,T;\mathbb{R}^n)$ is Bochner integrable when regarded as a function from $[t,T]$ to
   $L^1(\Omega;\mathbb{R}^n)$.
  \end{lem}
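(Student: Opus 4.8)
The plan is to verify the two hypotheses of the Bochner integrability criterion for the map $\Gamma\colon[t,T]\to L^1(\Omega;\mathbb{R}^n)$ defined by $\Gamma(s)=\gamma(s,\cdot)$: namely that $\Gamma$ is strongly measurable and that $\int_t^T\|\Gamma(s)\|_{L^1(\Omega)}\,ds<\infty$. Once both are in place, the Bochner theorem delivers the assertion immediately.

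First I would dispose of the integrability of the norm, which is the routine half. Since $\gamma$ is by definition a jointly measurable process, the function $(s,\omega)\mapsto|\gamma(s,\omega)|$ is $\mathcal{B}([t,T])\otimes\mathcal{F}$-measurable and nonnegative, so Tonelli's theorem gives
\begin{align*}
\int_t^T\|\Gamma(s)\|_{L^1(\Omega)}\,ds=\int_t^T E|\gamma(s,\cdot)|\,ds=E\Big(\int_t^T|\gamma(s,\cdot)|\,ds\Big)<\infty,
\end{align*}
the last inequality being precisely the defining property of $L^1_{\mathcal{F}_{t,\cdot}}(t,T;\mathbb{R}^n)$.

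The substantive step is strong measurability of $\Gamma$, which I would establish through the Pettis measurability theorem. The key structural fact is that $L^1(\Omega;\mathbb{R}^n)$ is \emph{separable}: the canonical Wiener--Poisson space constructed in Section 2 is a Polish space equipped with the completion of its Borel $\sigma$-algebra, whence $\mathcal{F}$ is countably generated modulo $P$-null sets and $L^1(\Omega;\mathbb{R}^n)$ is a separable Banach space. Given separability, Pettis's theorem reduces strong measurability to weak measurability, i.e.\ to the claim that for every $g\in L^\infty(\Omega;\mathbb{R}^n)=L^1(\Omega;\mathbb{R}^n)^*$ the scalar function
\begin{align*}
s\longmapsto\langle\Gamma(s),g\rangle=\int_\Omega\gamma(s,\omega)\cdot g(\omega)\,dP(\omega)
\end{align*}
is Lebesgue measurable on $[t,T]$. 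This in turn follows from the joint measurability of $\gamma$: since $|\gamma(s,\omega)\cdot g(\omega)|\le\|g\|_\infty|\gamma(s,\omega)|$ is integrable over $[t,T]\times\Omega$, Fubini's theorem guarantees that the section integral above is measurable in $s$.

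With strong measurability and norm-integrability in hand, the Bochner theorem yields that $\Gamma$ is Bochner integrable. The one point demanding genuine care---and the main obstacle---is the separability of $L^1(\Omega;\mathbb{R}^n)$, since the entire Pettis argument collapses without it; this is exactly where the explicit Polish structure of the canonical sample space (the standard Wiener space together with the space of integer-valued measures under the vague topology) is essential, and I would state it as the load-bearing lemma. An alternative that avoids quoting Pettis would be to approximate $\gamma$ directly by $\mathcal{F}$-measurable simple processes converging in $L^1(\Omega)$ for a.e.\ $s$, but this route ultimately rests on the same separability and is less transparent, so I would favour the Pettis formulation.
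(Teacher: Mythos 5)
Your proposal is correct and follows essentially the same route as the paper: both establish separability of $L^1(\Omega;\mathbb{R}^n)$ from the fact that $\mathcal{F}_{t,T}$ is countably generated modulo $P_t$-null sets (the paper derives this from the $\sigma$-algebra being generated by a c\`{a}dl\`{a}g process and cites Doob, you from the Polish structure of the canonical Wiener--Poisson space), then invoke the Pettis theorem to reduce strong measurability to weak measurability, verify the latter by Fubini exactly as in the paper, and obtain norm-integrability directly from the definition of $L^1_{\mathcal{F}_{t,\cdot}}(t,T;\mathbb{R}^n)$. No substantive difference.
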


\begin{proof}
    Except the null sets, the sigma algebra $\mathcal{F}_{t,T}$ is generated by a c\`{a}dl\`{a}g process. As an implication it follows that, excluding the null sets,  $\mathcal{F}_{t,T}$ is generated by a countable family of subsets of $\Omega$.  We combine this fact with separability of $\mathbb{R}^n$ and argue along the lines of (\cite{Doob:1994bn};
     p. 92) to conclude that $L^1(\Omega, \mathbb{R}^n)$ is separable. It follows from the definition of $ L_{\mathcal{F}_{t,\cdot}}^1(t,T;\mathbb{R}^n)$ that
      \[\int_t^T|\gamma(r,\cdot)|_{L^1(\Omega;
      \mathbb{R}^n)}dr< \infty.\]
      Hence, it only remains to show that the map $s\rightarrow \gamma(s,\cdot)$ is measurable as a function from $[t,T]$ to
       $L^1(\Omega; \mathbb{R}^n)$. To this end, we invoke the separability of $L^1(\Omega;\mathbb{R}^n)$ and conclude that measurability is equivalent to weak measurability. This means it is enough to prove $r\mapsto E(\gamma(r,\omega).\rho(\omega))$ is
         Lebesgue measurable for every $\rho\in L^\infty(\Omega; \mathbb{R}^n)$, but this is obvious once we apply
         the Fubini theorem as
         $$E\int_t^T|\gamma(r,\omega).\rho(\omega)|dr\le ||\rho||_{L^\infty(\Omega; \mathbb{R}^n)} \int_t^T|\gamma(r,\cdot)|_{L^1(\Omega;
      \mathbb{R}^n)}dr < \infty.$$
  \end{proof}

 A crucial technical difference we have here, in comparison with \cite{Gozzi:2005bd}, is the discontinuity
      of the state processes. However, these processes are stochastically continuous and the next lemma is consequence of this property.

    \begin{defi}[stochastic continuity]
     A stochastic process $(X(s))_{a\le s\le b}$, defined on a probability space $(\Omega, \mathcal{F}, P)$, is called
     stochastically continuous if for every $\epsilon > 0$ and $s\in[a,b]$
     $$\lim_{r\rightarrow s}P(|X(r)-X(s)|> \epsilon)= 0.$$
    \end{defi}
   We have the following lemma.

 \begin{lem}\label{lem:stoc-continuity}
  Let $X(\cdot)$ be the solution of \eqref{eq:SDG_dymcs} for a pair of admissible controls
  $(Y,Z)$, then $X(\cdot)$ is stochastically continuous and for a continuous function
   $\psi\in C_{p} \big([0,T]\times\mathbb{R}^d\big)$, for $p=1$  or  $p=2$, it holds that
   \begin{align*}
     \lim_{h\downarrow 0}E\Big(\frac
     1h\int_s^{s+h}\psi(r,X(r))dr\Big)= E\big(\psi(s,X(s))\big)
   \end{align*}
   \end{lem}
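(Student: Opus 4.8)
The plan is to combine a second-moment estimate for the state process with the polynomial growth of $\psi$ in order to upgrade convergence in probability to convergence of expectations, and then to exploit right-continuity of the resulting deterministic function to identify the Cesàro limit.

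First I would establish stochastic continuity. For $r>s$, writing the increment $X(r)-X(s)$ through the integral representation \eqref{eq:int_represent}, I split it into its drift, Brownian and compensated-jump parts. Boundedness of $b$ and $\sigma$ (assumption \ref{A2}) together with Cauchy--Schwarz bounds the drift part by $C|r-s|^2$; It\^o isometry bounds the Brownian part by $\int_s^r E|\sigma|^2\,du\le C|r-s|$; and for the jump part It\^o isometry gives $\int_s^r\int_{\mathbb{E}}E|\eta|^2\,\nu(dw)\,du$, which is at most $C|r-s|$ because $|\eta|\le K\min(|w|,1)$ (assumption \ref{A2}) and $\int_{\mathbb{E}}\min(|w|^2,1)\,\nu(dw)<\infty$ (assumption \ref{A3}). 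Hence $E|X(r)-X(s)|^2\le C|r-s|$ for $|r-s|$ small, and Chebyshev's inequality yields $P(|X(r)-X(s)|>\epsilon)\le C|r-s|/\epsilon^2\to 0$ as $r\downarrow s$. Applying the same estimate to the increment over $[r,s]$ for $r<s$ gives the left limit in probability as well, so $X(\cdot)$ is stochastically continuous.

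Next, by Fubini's theorem (justified by the moment bound below), I would rewrite $E\big(\tfrac1h\int_s^{s+h}\psi(r,X(r))\,dr\big)=\tfrac1h\int_s^{s+h}g(r)\,dr$, where $g(r):=E[\psi(r,X(r))]$. It then suffices to prove that $g$ is right-continuous at $s$, since the elementary Cesàro property $\tfrac1h\int_s^{s+h}g(r)\,dr\to g(s)$ as $h\downarrow 0$ completes the argument. To obtain right-continuity of $g$, I use that stochastic continuity gives $X(r)\to X(s)$ in probability as $r\downarrow s$, whence continuity of $\psi$ yields $\psi(r,X(r))\to\psi(s,X(s))$ in probability.

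The crux is upgrading this to convergence of expectations, which requires uniform integrability of $\{\psi(r,X(r))\}_{r\in[s,s+1]}$. Since $b,\sigma,\eta$ are bounded, the standard higher-moment estimates for L\'evy-driven SDEs---the higher-order analogues of the first-moment bound (Lemma \ref{lem:A1}), obtained from the Burkholder--Davis--Gundy and Kunita inequalities together with Gronwall---yield $\sup_{r\in[s,s+1]}E|X(r)|^{2p}<\infty$. Because $|\psi(r,x)|\le C(1+|x|^p)$, the variables $\psi(r,X(r))$ are bounded in $L^2$, hence uniformly integrable, so convergence in probability forces $L^1$ convergence and thus $g(r)\to g(s)$; the same bound legitimizes the Fubini step. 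I expect this uniform-integrability argument to be the main obstacle, as it is where the polynomial growth of $\psi$ must be matched against the moments of the jump-diffusion, and where the jump contribution to the moment estimate must be controlled via \ref{A3} rather than by It\^o isometry alone.
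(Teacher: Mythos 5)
Your proof is correct and takes essentially the same route as the paper: stochastic continuity via the second-moment estimate and Chebyshev, then continuity of $r\mapsto E[\psi(r,X(r))]$ by upgrading convergence in probability of $\psi(r,X(r))$ to convergence of expectations, then Fubini together with the Ces\`{a}ro/Lebesgue-point property. Your abstract uniform-integrability step ($L^2$-boundedness from moment bounds on $X$) is precisely what the paper carries out by hand with its truncation and Cauchy--Schwarz argument, and your appeal to $2p$-th moment estimates for the case $p=2$ matches the paper's remark that its first proof extends to any $p$ once the $2p$-th moments of $X(\cdot)$ are bounded.
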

  \begin{proof}
The stochastic
continuity $X(\cdot)$ is a consequence of 
\begin{align*}
   X(\tau)-X(s) = \int_s^\tau b(r, X(r); Y(r), Z(r))dr+\int_s^\tau\sigma(r,X(r);Y(r), Z(r))dW(r)\\
   +\int_s^\tau\int_{\mathbb{E}}\eta(r, X(r^-);Y(r), Z(r);w)\tilde{N}(dr,dw),
\end{align*}and the assumptions on the data. It follows from the above realtionship that
\begin{align*}
 &E| X(\tau)-X(s)|^2\\ \le& 3E\big[ \int_t^\tau (b(r, X(r); Y(r), Z(r))) dr\big]^2+3\int_{s}^\tau {E}(\sigma^2(r, X(r); Y(r), Z(r))dr\\&+3\int_{s}^\tau \int_{\mathbb{E}}E(\eta^2(r, X(r); Y(r), Z(r);w))\nu(dw)dr \\
\le & K(|s-\tau|+|s-\tau|^2),
\end{align*} where we have used It\^{o}-L\'{e}vy isometry.  Therefore by Chebyshev's inequality, we have 

\begin{align*}
 P\big( | X(\tau)-X(s)| >\epsilon \big) \le \frac{K}{\epsilon^2}(|s-\tau|+|s-\tau|^2),
\end{align*} which proves the stochastic continuity of $X(r)$.

 Set $L(r) = \psi(r,X(r))$  and   $ F(r) =E[L(r)]$.  Assume that   $\psi\in C_{1} \big([0,T]\times\mathbb{R}^d\big)$. Thanks to the continuity of $\psi$,  $L(r)$ is also stochastically continuous. By It\^{o}-L\`{e}vy isometry, it is trivial to check that $X(r)$ has finite second second moment  for all $r\in [t,T]$,  and the bound is  independent of $r$.  Therefore $ F(r)$ is a bounded function on $[t,T]$.  
 
     \begin{align*}
     |F(r)-F(s)|&\le
                \int_{|L(r)-L(s)|\le\delta}|L(r)-L(s)|dP+\int_{|L(r)-L(s)|>\delta}|L(r)-L(s)|dP\\
                &\le \delta +E\big[ |L(r)-L(s)| {\bf  1}_{|L(r)-L(s)| >\delta}\big]\\
                & \le \delta + \big[E(|L(r)-L(s)|^2)\big]^{\frac 12} \big[P(|L(r)-L(s)| > \delta)\big]^{\frac{1}{2}}\\
                & \le  \delta + C \big[P(|L(r)-L(s)| > \delta)\big]^{\frac{1}{2}},
    \end{align*} which goes to $0$ as $s\rightarrow r$  and $\delta\rightarrow 0$. Therefore
    $F(r)$ is continuous in $r$ and hence every point is
    a lebesgue point.
      Now, after using Fubini's  theorem, dominated convergence theorem and the continuity of $F$ , we have
    \begin{align*}
     \lim_{h\downarrow 0}E\Big(\frac
     1h\int_s^{s+h}\psi(r,X(r))dr\Big) =\lim_{h\downarrow 0}\frac
     1h\int_{s}^{s+h} F(r)dr
       = F(s),
   \end{align*}  and this proves the lemma.
\end{proof}
\begin{proof}[Alternative proof of Lemma \ref{lem:stoc-continuity} for $p\in \{1,2\}$] 
  Since $X(\cdot)$ is C\`{a}dl\`{a}g ( i.e. right continuous ) and $\psi$ is continuous, one must have $\frac
     1h\int_s^{s+h}\psi(r,X(r))dr = \psi(s, X(s))$.  If $\psi \in C_p([0,T]\times \mathbb{R}^d)$, for $p =1$ or $2$, and since  $X(\cdot)$ has finite second moment, 
     we can apply dominated convergence theorem and conclude 
     
      \begin{align*}
     \lim_{h\downarrow 0}E\Big(\frac
     1h\int_s^{s+h}\psi(r,X(r))dr\Big) =\lim_{h\downarrow 0}\frac
     1h\int_{s}^{s+h} F(r)dr
       = F(s).
   \end{align*} 

\end{proof}
\begin{rem}
  The first proof of Lemma   \ref{lem:stoc-continuity}  also works for any $p\in \mathbb{N}$,  as it is possible to prove boundedness of any $2p$-th moment of $X(\cdot)$. It also says that $F(\cdot)$ is continuous. However, the alternative proof is much more compact and to the point. Though the second proof is enough conclude the lemma, but it only establishes right continuity of $F(\cdot)$. 
\end{rem}

  \begin{lem}\label{lem:lebsgue}
     Let the assumptions \ref{A1}, \ref{A2} and \ref{A3}  be satisfied and for
     $(t,x)\in[0,T]\times\mathbb{R}^d$,  $\big(Y(\cdot),Z(\cdot)\big)$
     be an admissible pair of controls, and $X(\cdot)$ be  the
     corresponding solution of \eqref{eq:SDG_dymcs}. Define the processes
     \begin{align*}
     z_1(r)&= b(r,X(r);Y(r),Z(r)),~~~z_2(r):=
     \sigma\sigma^T(r,X(r);Y(r),Z(r)),\\
     ~z_3(r, w)&=\eta(r,X(r);Y(r),Z(r);w)\eta^T(r,X(r);Y(r),Z(r);w),\\
     z_4(r,w) &=\eta(r,X(r);Y(r),Z(r); w).
     \end{align*}
    Then
    \begin{align}
    \label{lebesgue_point}& \lim_{h\downarrow 0}E\frac
    1h\int_s^{s+h}|z_i(r)-z_i(s)|dr~=0
    ~\text{for a.e. }~s\in[t,T]\quad\text{for}\quad i\in\{1,2\}, \\
     \label{lebesgue_point-2}&\lim_{h\downarrow 0}E\frac
    1h\int_s^{s+h}|z_4(r,w)-z_4(s,w)|dr~= 0
    ~\text{for a.e.}~s\in[t,T],~\text{for all}~w\in \mathbb{R}^m,
    \end{align} and
    \begin{align}
    \label{lebesgue_point-3}&\lim_{h\downarrow 0}E\frac
    1h\int_s^{s+h}\int_{\mathbb{E}}|z_3(r,w)-z_3(s,w)|\nu(dw)dr~= 0~\text{for a.e.}~s\in[t,T].
    \end{align}
   \end{lem}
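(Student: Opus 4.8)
The plan is to view each process $z_i$ as a map from $[t,T]$ into a suitable separable Banach space $\mathbb{B}$, show that this map is Bochner integrable, and then invoke Lemma \ref{lem:lbpts-measure} to conclude that almost every $s\in[t,T]$ is a right Lebesgue point. Since every integrand below is nonnegative, Tonelli's theorem permits interchanging $E$ with $\frac 1h\int_s^{s+h}$, so that the right-Lebesgue-point property of $z_i$ in $\mathbb{B}$ is literally the limit asserted in the lemma.

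First I would treat $z_1$ and $z_2$. By \ref{A2} the coefficients $b$ and $\sigma$ are bounded, and since $X(\cdot)$ is adapted and $(Y,Z)$ predictable, $z_1$ and $z_2$ are bounded measurable processes, hence elements of $L^1_{\mathcal{F}_{t,\cdot}}(t,T;\mathbb{R}^d)$ and $L^1_{\mathcal{F}_{t,\cdot}}(t,T;\mathbb{R}^{d\times d})$. Lemma \ref{lem:bochner} makes them Bochner integrable into $L^1(\Omega;\mathbb{R}^d)$ and $L^1(\Omega;\mathbb{R}^{d\times d})$, and Lemma \ref{lem:lbpts-measure} yields \eqref{lebesgue_point}.

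For $z_3$ the target space is $\mathbb{B}=L^1\big(\Omega;L^1(\mathbb{E},\nu;\mathbb{R}^{d\times d})\big)$. The pointwise bound $|z_3(r,w)|=|\eta\eta^T|\le K^2\min(|w|^2,1)$ from \ref{A2}, together with \ref{A3}, gives $\int_{\mathbb{E}}|z_3(r,w)|\nu(dw)\le K^2\int_{\mathbb{E}}\min(|w|^2,1)\nu(dw)<\infty$ uniformly in $r$; thus $z_3$ is bounded in $\mathbb{B}$, and repeating the reasoning of Lemma \ref{lem:bochner} on the product space $(\Omega\times\mathbb{E},P\otimes\nu)$ gives strong measurability and hence Bochner integrability. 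Lemma \ref{lem:lbpts-measure} then produces \eqref{lebesgue_point-3}.

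The subtle assertion is \eqref{lebesgue_point-2}, in which the exceptional null set must be chosen independently of $w$. For each fixed $w$ the bound $|z_4(r,w)|\le K\min(|w|,1)$ makes $z_4(\cdot,w)$ Bochner integrable into $L^1(\Omega;\mathbb{R}^d)$, so Lemma \ref{lem:lbpts-measure} furnishes a null set $N_w$ off which $s$ is a right Lebesgue point. I would fix a countable dense set $\{w_k\}\subset\mathbb{R}^m$, put $N=\bigcup_k N_{w_k}$, and pass to arbitrary $w$ using the modulus of continuity $\rho$ of $\eta$ in $w$, which is uniform in all other entries by \ref{A2}: choosing $w_k$ with $|w-w_k|$ small,
\begin{align*}
 E\frac 1h\int_s^{s+h}|z_4(r,w)-z_4(s,w)|\,dr\le 2\rho(|w-w_k|)+E\frac 1h\int_s^{s+h}|z_4(r,w_k)-z_4(s,w_k)|\,dr.
\end{align*}
For $s\notin N$ the last term vanishes as $h\downarrow 0$, leaving $2\rho(|w-w_k|)$, which is arbitrarily small, so \eqref{lebesgue_point-2} holds for every $w$ with the single null set $N$. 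I expect this uniformity-in-$w$ argument, and the verification of strong measurability in the $\nu$-weighted space for $z_3$, to be the only genuinely non-routine steps; the remainder is a direct application of Lemmas \ref{lem:bochner} and \ref{lem:lbpts-measure}.
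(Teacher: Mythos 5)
Your proposal is correct and follows essentially the same route as the paper: Bochner integrability via Lemma \ref{lem:bochner} combined with Lemma \ref{lem:lbpts-measure} for \eqref{lebesgue_point}, the product measure space $(\Omega\times\mathbb{E}, P\otimes\nu)$ for \eqref{lebesgue_point-3}, and a countable dense set of $w$'s together with the uniform-in-other-entries continuity of $\eta$ in $w$ (from \ref{A2}) to get the ``for all $w$'' conclusion in \eqref{lebesgue_point-2}. Your write-up is in fact slightly more explicit than the paper's on the $w$-uniformity step, spelling out the triangle-inequality estimate with the modulus of continuity $\rho$.
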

\begin{proof}
 Fix $w\in \mathbb{R}^m$. Then, by \ref{A1},\ref{A2} and \ref{A3}, $z_1, z_2, z_4(\cdot, w)\in L^1_{\mathcal{F}_{t,\cdot}}(t, T; \mathbb{B})$ for $\mathbb{B}:= \mathbb{R}^d,\mathbb{S}^d, \mathbb{R}^d$ respectively.  By Lemma \ref{lem:bochner}, each of them is Bochner integrable, when viewed as $L^1(\Omega, \mathbb{B})$-valued maps and we conclude that the set of right Lebesgue points of $z_i$'s, as maps from $[t,T]$  to $L^1(\Omega, \mathbb{B})$,  is of  full measure in $[t,T]$. This gives \eqref{lebesgue_point}, and \eqref{lebesgue_point-2} except ``for all $w$'' part.  This implies that \eqref{lebesgue_point-2} holds for any `any countable dense subset of $\mathbb{R}^m$'. As 
$z_4(r,w)$ is continuous in $w$ uniformly in other entries, we have the full conclusion.

     We now prove \eqref{lebesgue_point-3}. Define  $\Omega^\prime= \Omega\times \mathbb{R}^m\backslash\{0\}$ and $\mu = P\otimes \nu$. Then $(\Omega^\prime,\mu)$ is measure space with the sigma algebra $\mathcal{F}\times \mathcal{B}(\mathbb{R}^m\backslash\{0\})$ and $z_3(r,\cdot)\in L^1\big((\Omega^\prime,\mu); \mathbb{S}^d\big)$. For the same reasoning detailed  in Lemma \ref{lem:bochner},  $z_3$ is Bochner integrable when considered as a map from $[t,T]$ to $L^1\big((\Omega^\prime,\mu); \mathbb{S}^d\big)$. Therefore we can apply Lemma \ref{lem:lbpts-measure} to conclude the result.
  
  \end{proof}

  \begin{lem}\label{lem:lpt}
     Let $\psi: [t,T]\times \mathbb{R}^d\mapsto \mathbb{R}^n$ be continuous and satisfy \eqref{eq:growth} with $k=1$.
      If \eqref{lebesgue_point-2} is satisfied for some $s\in[t,T]$, then
      \begin{align}
       \label{eq:lebesgue_new}\lim_{h\downarrow 0}  E\Big[\frac 1h\int_s^{s+h}\int_{\mathbb{E}}|\psi(r, X(r)+z_4(r,w))-\psi(s, X(s)+z_4(s,w))|\hat{\nu}(dw) dr\Big ] = 0,
      \end{align}
    where $\hat{\nu}(dw) := \min(1,|w|^2)\nu(dw)$ is a bounded Radon measure.
       \end{lem}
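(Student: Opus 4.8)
The plan is to reduce \eqref{eq:lebesgue_new} to the numerical statement $E[\Delta_h]\to 0$, where $\Delta_h$ is the double average on the left-hand side. Since the integrand is nonnegative, Tonelli's theorem lets me freely interchange the expectation, the time average $\frac1h\int_s^{s+h}dr$ and the integration against the \emph{finite} measure $\hat\nu$. The first step is to record a uniform bound: from \eqref{eq:growth} with $k=1$ one has $|\psi(r,x)|\le C(1+|x|)$, while \ref{A2} gives $|z_4(r,w)|\le K\min(|w|,1)\le K$; combining these with the uniform second-moment estimate $\sup_r E|X(r)|^2\le C$ (which follows from the It\^{o}-L\'{e}vy isometry exactly as in the proof of Lemma \ref{lem:stoc-continuity}) yields
\[
\sup_{r\in[t,T],\,w\in\mathbb{E}} E\big|\psi(r,X(r)+z_4(r,w))\big|^2\le C.
\]
This uniform $L^2$-bound supplies the uniform integrability needed for the truncations below and legitimises all Fubini interchanges.

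Next I would split the integrand by the triangle inequality into
\[
(\mathrm I)=\big|\psi(r,X(r)+z_4(r,w))-\psi(s,X(s)+z_4(r,w))\big|,\qquad (\mathrm{II})=\big|\psi(s,X(s)+z_4(r,w))-\psi(s,X(s)+z_4(s,w))\big|.
\]
The key structural observation is that in $(\mathrm I)$ the shift $z_4(r,w)$ is common to both arguments, so the difference of the spatial variables equals $X(r)-X(s)$, which is independent of $w$; this term is therefore controlled purely by stochastic continuity of $X$. Fix $R>0$ and $\epsilon>0$: on the compact set $[t,T]\times\overline{B(0,R+K)}$ the function $\psi$ is uniformly continuous, so there is $\delta>0$ with $|\psi(r,a)-\psi(s,b)|<\epsilon$ whenever $|r-s|<\delta$, $|a-b|<\delta$ and $a,b\in\overline{B(0,R+K)}$. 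On the event $\{|X(r)|\le R,\,|X(s)|\le R,\,|X(r)-X(s)|<\delta\}$ (with $|r-s|<\delta$) one gets $(\mathrm I)<\epsilon$, while on the complement I invoke Cauchy--Schwarz with the uniform $L^2$-bound and the small-probability estimates $P(|X(s)|>R)\le CR^{-2}$ together with $P(|X(r)-X(s)|\ge\delta)\le K\delta^{-2}(|r-s|+|r-s|^2)$ from the proof of Lemma \ref{lem:stoc-continuity}. As these bounds are $w$-independent, integrating against $\hat\nu$ and averaging over $r\in[s,s+h]$ gives $\frac1h\int_s^{s+h}\int_{\mathbb{E}}E[(\mathrm I)]\,\hat\nu(dw)\,dr\le\big(\epsilon+CR^{-1}+o_h(1)\big)\hat\nu(\mathbb{E})$, where $o_h(1)\to0$ as $h\downarrow0$ for fixed $R,\epsilon,\delta$.

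For $(\mathrm{II})$ I freeze $(s,X(s))$ and use uniform continuity of $x\mapsto\psi(s,x)$ on $\overline{B(0,R+K)}$: on $\{|X(s)|\le R\}$ both shifted points lie in this ball, so
\[
(\mathrm{II})\le\epsilon+2M_R\,\mathbf 1_{\{|z_4(r,w)-z_4(s,w)|\ge\delta\}}\le\epsilon+\frac{2M_R}{\delta}\,|z_4(r,w)-z_4(s,w)|,
\]
with $M_R=\sup_{[t,T]\times\overline{B(0,R+K)}}|\psi|$, while the part on $\{|X(s)|>R\}$ is again handled by Cauchy--Schwarz. The crucial contribution is then $\frac{2M_R}{\delta}\,\frac1h\int_s^{s+h}\int_{\mathbb{E}}E|z_4(r,w)-z_4(s,w)|\,\hat\nu(dw)\,dr$. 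Here I apply \eqref{lebesgue_point-2} (valid at the given $s$) pointwise in $w$ and pass the $w$-integral to the limit by dominated convergence: the dominating function $2K\min(|w|,1)$ is $\hat\nu$-integrable since $\min(|w|,1)\min(|w|^2,1)\le\min(|w|^2,1)$ and \ref{A3} holds, so this contribution equals $\frac{2M_R}{\delta}\,o_h(1)$.

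Assembling the two estimates and taking $\limsup_{h\downarrow0}$ (which annihilates every $o_h(1)$ at fixed $R,\epsilon,\delta$) leaves $\limsup_{h\downarrow0}E[\Delta_h]\le C(\epsilon+R^{-1})\hat\nu(\mathbb{E})$; letting $R\to\infty$ and then $\epsilon\to0$ yields \eqref{eq:lebesgue_new}. I expect the main obstacle to be precisely that $\psi$ is merely continuous with linear growth rather than Lipschitz, so the \emph{averaged} convergence of $z_4$ furnished by \eqref{lebesgue_point-2} cannot be turned into averaged convergence of $\psi(\cdot,X+z_4)$ by a direct modulus estimate. This is circumvented by the compact-set uniform-continuity argument combined with the elementary bound $\mathbf 1_{\{V\ge\delta\}}\le V/\delta$ in $(\mathrm{II})$, and by exploiting the cancellation of the $w$-dependent shift in $(\mathrm I)$ so that only stochastic continuity of $X$ is needed there.
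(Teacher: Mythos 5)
Your proof is correct, but it takes a genuinely different route from the paper's. The paper argues by regularization in two steps: first it assumes $\psi$ is Lipschitz in time and $C^1$ in space with $|\psi|,|D\psi|\le C(1+|x|)$, in which case a mean-value estimate bounds the integrand pointwise by $C\big[|r-s|+(|X(r)|+|X(s)|+|z_4(r,w)|+|z_4(s,w)|)(|X(r)-X(s)|+|z_4(r,w)-z_4(s,w)|)\big]$, which is then handled exactly by your three pillars (Cauchy--Schwarz with uniform second moments, stochastic continuity of $X$, and \eqref{lebesgue_point-2} plus dominated convergence in $w$); second, it approximates a general continuous $\psi$ satisfying \eqref{eq:growth} with $k=1$ by such smooth $\psi_\eps$ with $|\psi-\psi_\eps|\le C\eps(1+|x|)$, so the approximation error contributes only $2C\hat\nu(\mathbb{E})\eps$ after taking expectations. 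You avoid the smoothing step altogether: your triangle-inequality split into the term $(\mathrm I)$ (where the jump shift $z_4(r,w)$ cancels, leaving only the $w$-independent increment $X(r)-X(s)$) and the term $(\mathrm{II})$ (frozen state, varying shift), combined with truncation to $\{|X|\le R\}$, uniform continuity on compacts, Chebyshev tail bounds, and the conversion $\mathbf 1_{\{V\ge\delta\}}\le V/\delta$ applied to \eqref{lebesgue_point-2}, replaces the mean-value/mollification machinery. What the paper's route buys is brevity, at the price of asserting (without constructing) the approximating family $\psi_\eps$; what your route buys is a self-contained, purely elementary argument using only continuity of $\psi$, at the price of more bookkeeping in the $\epsilon$, $\delta$, $R$, $h$ quantifiers --- which you do order correctly (limsup in $h$ first, then $R\to\infty$, then $\eps\to 0$). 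Both proofs ultimately rest on the same three ingredients: finiteness of $\hat\nu$, the uniform second-moment and stochastic-continuity estimates for $X$ from the proof of Lemma \ref{lem:stoc-continuity}, and the Lebesgue-point property \eqref{lebesgue_point-2} of $z_4$ furnished by \ref{A2}--\ref{A3}.
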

   \begin{proof}
       The proof is done in two steps.\\
       
       \noindent{\bf Step 1.} Let $\psi$ be Lipschitz continuous in $s$ and continuously differentiable in $x$ such that
       \begin{align*}
           |\psi(s,x)|, |D\psi(s,x)| \le C(1+|x|)\qquad \text{for}\quad (s,x)\in [0,T]\times \mathbb{R}^d.
       \end{align*}
        Then
           \begin{align*}&|\psi(r, X(r)+z_4(r,w))-\psi(s, X(s)+z_4(s,w))|\\
           \le &C\big[ |r-s|+\big(|X(r)|+|X(s)|+|z_4(r,w)|+|z_4(s,w)| \big)\\&\hspace{2cm}\times\big(|X(r)-X(s)|+|z_4(r,w)-z_4(s,w)|\big)\big].
           \end{align*} By It\^{o}-L\`{e}vy isometry, $X(r)$ has bounded second moment for all $r\in[t,T]$.  With this, we now use boundedness of $\eta$ and Cauchy-Schwartz inequality to obtain 
           \begin{align*}
             &\ E\frac 1h\int_s^{s+h}\int_{\mathbb{E}}|\psi(r, X(r)+z_4(r,w))-\psi(s, X(s)+z_4(s,w))|\hat{\nu}(dw) dr\\
             \le & C\frac 1h\int_s^{s+h}\int_{\mathbb{E}}\big[|r-s|+(E|X(r)-X(s)|^2)^{\frac 12} \big) \hat{\nu}(dw) dr \\
             &+ C^\prime\frac 1h\Big[ E\int_s^{s+h}\int_{\mathbb{E}} |z_4(r,w)-z_4(s,w)| \hat{\nu}(dw) dr \Big]^{\frac12}.
           \end{align*} We now use \eqref{lebesgue_point-2} and dominated convergence theorem to pass to the limit $h\downarrow 0$ and conclude \eqref{eq:lebesgue_new}.  \\
           
           \vspace{.05cm}
              \noindent{\bf Step 2.}  If  $\psi: [t,T]\times \mathbb{R}^d\mapsto \mathbb{R}^n$ be continuous and satisfies \eqref{eq:growth} with $k=1$. Then, for every $\epsilon> 0$, there exists $\psi_\epsilon$ satisfying the conditions of  the first step such that $|\psi(s,x)-\psi_\epsilon(s,x)|\le C \epsilon(1+|x|).$ Then
             
     \begin{align*}
        &\quad\lim_{h\downarrow 0}E\frac 1h\int_s^{s+h}\int_{\mathbb{E}}|\psi(r, X(r)+z_4(r,w))-\psi(s, X(s)+z_4(s,w))|\hat{\nu}(dw) dr\\
        &\le \lim_{h\downarrow 0} E\frac 1h\int_s^{s+h}\int_{\mathbb{E}}|\psi_\epsilon(r,X(r)+z_4(r,w))-\psi_\epsilon(s, X(s)+z_4(s,w))|\hat{\nu}(dw)
         dr\\&\hspace{10cm}+2C\hat{\nu}(\mathbb{E})\epsilon,
       \end{align*} where the right hand side goes to zero as $\epsilon \rightarrow 0$.       

          \end{proof}

 \begin{proof}[Proof of Theorem \ref{thm:verification}]
   For the sake of simplicity we use the following abbreviated notations:
      \begin{align*}
     f^*(s)&= f(s,X^*(s); Y^*(s), Z^*(s)), \quad g^*(s)= g(X^*(s)),\\
     b^*(s)&= b(s,X^*(s);Y^*(s), Z^*(s)),\quad
     \sigma^*(s)=\sigma^*(s,X^*(s);Y^*(s), Z^*(s)),\\
     &\quad\quad \eta^*(s,w)=\eta^*(s,X^*(s);Y^*(s), Z^*(s);w).
   \end{align*}

Choose $\tau \in[t, T]$ so that \eqref{lebesgue_point} holds at $\tau$ for $z_1(\cdot)=b^*(\cdot)$,
   $z_2(\cdot)=\sigma^*(\cdot){\sigma^*(\cdot)}^T$; \eqref{lebesgue_point-3} holds for $z_3(\cdot,w)= \eta^*(\cdot, w)\eta^*(\cdot, w)^T$ and
   \eqref{lebesgue_point-2} holds for $z_4(\cdot, w)=\eta^*(\cdot,w)$. By Lemma \ref{lem:lebsgue}, the set of all such $\tau$'s in $[t,T]$
   is of full measure. From Section 2, recall the identification
     
 \begin{align*}
      \Omega = \Omega_{t,\tau}\times \Omega_\tau\quad\text{and}\quad P=P_1\otimes P_2,
     \end{align*} where $P_1= P_{t,\tau}$ and $P_2=P_{\tau}$. We use $E^{P_1}$ and $E^{P_2}$  for expectations with respect to $P_1$ and $P_2$ respectively. For any $\mathcal{F}_{t,\tau}$ measurable function
     $\varphi(\omega)$ on $(\Omega, P)$ we have

\begin{align*}
      \chi(\omega)=E^{P}[\chi(\omega_1,\omega_2)|\mathcal{F}_{t,T}]=
      E^{P_1\times P_2}[\varphi(\omega_1,\omega_2)|\mathcal{F}_{t,\tau}]=E^{P_2}(\varphi(\omega_1,\omega_2)),~P_1-a.s.
     \end{align*}i.e.  $P_1$-a.s.  $\varphi(\omega)$ is deterministic in $(\Omega_\tau, P_2)$ , where $\omega= (\omega_1, \omega_2)\in \Omega_{t,\tau}\times\Omega_{\tau}$. Therefore $X^*(\tau),
     \Phi^1(\tau), p^1(\tau), q^1(\tau)$ and $ Q^1(\tau)$ are all $P_1-a.s.$ deterministic in $(\Omega_\tau, P_2)$. It has
      already been pointed out in Section 3 that,  on $(\Omega_\tau, P_2)$,   $X^*(s)_{s\ge \tau}$ has the 
       dynamics

 \begin{align}
      \label{eq:new-dynamics}X^*(s)=& X^*(\tau)+\int_\tau^s b^*(r)dr+\int_\tau^s\sigma^*(r)dW^\tau(r)\\&\nonumber+\int_\tau^s\int_{\mathbb{E}}\eta(X(r^-);Y(r), Z(r);w)
      \tilde{N}^\tau(dr,dw)
      \end{align} $P_1$-a.s,
 where $(W^\tau,\tilde{N}^\tau)$ be the process driving the dynamics on $(\Omega_\tau, P_2)$.  We wish to apply Ito-L\'{e}vy formula  to $\Phi^1_\tau(s, X^*(s))$ on $(\Omega_\tau, P_2)$,
        relative to the dynamics \eqref{eq:new-dynamics}.  Firstly, $\Phi_\tau^1$ is deterministic on $\Omega_\tau$ $(P_1-a.s.)$ and secondly it is $C^{1,2}$, hence It\^{o}-L\`{e}vy formula is applicable. 
        To this end, we simply write $\phi$ for $\Phi^1_\tau$  and apply Ito-L\`{e}vy formula to conclude, for $h> 0$,

\begin{align*}
         &\phi(\tau+h, X^*(\tau+h))-\phi(\tau,X^*(\tau))\\
         =&\int_{\tau}^{\tau+h}\Big[\partial_s\phi(r,X^*(r))+\langle D\phi(r,X^*(r)); b^*(r)\rangle
         +\frac 12
         \text{Tr}\big(\sigma^*{\sigma^*}^T(r)D^2\phi(r,X^*(r))\big)\Big]
         dr\\
         &+\int_{\tau}^{\tau+h}\int_{\mathbb{E}}\Big(\phi(r,X^*(r)+\eta^*(r,w))
         -\phi(r,X^*(r))-\eta^*D\phi(r, X^*(r))\Big)\nu(dw)dr\\
          &+\int_{\tau}^{\tau+h}\langle
         D\phi(r,X^*(r);\sigma^*(r))dW^\tau(r)\rangle
         \\&+\int_{\tau}^{\tau+h}\int_{\mathbb{E}}\big[\phi(r,X^*(r^-)+\eta(\cdot\cdot\cdot\cdot))-\phi(r,X^*(r^-))\big]\tilde{N}^\tau(dr,dw),
         \quad P_1-a.s.
         \end{align*}

 We divide the above equality by $h$, apply $E^{P_2}$ and use the fact that $u-\phi$ has a global
         maximum at $(\tau, X^*(\tau))$ to obtain
       \begin{align}
         \label{eq:ito-p2-average}&E^{P_2}\frac 1h\big[u(\tau+h, X^*(\tau+h))-u(\tau,X^*(\tau))\big]\\
        \nonumber \le &E^{P_2}\frac 1h\Big[\int_{\tau}^{\tau+h}\big[\partial_s\phi(r,X^*(r))+\langle D\phi(r,X^*(r)); b^*(r)\rangle
         \\\nonumber&\hspace{2.5cm}+\frac 12
         \text{Tr}\big(\sigma^*(r)D^2\phi(r,X^*(r))\sigma^*(r)\big)\big]
         dr\\
         \nonumber&\hspace{1cm}+\int_{\tau}^{\tau+h}\int_{\mathbb{E}}\Big(\phi(r,X^*(r)+\eta^*(r,w))-\phi(r,X^*(r))\\
         \nonumber&\hspace{3.8cm}-\eta^*(r,w)D\phi(r,X(r))\Big)
         \nu(dw)dr\Big],
         \end{align}$P_1$-- a.s. We are interested in passing to the limit $h\downarrow 0$ in \eqref{eq:ito-p2-average},
          and we do so by separately considering each term from the right-hand side. We begin with the simplest one.

  Since $\partial_s \phi$ is continuous, we can apply Lemma \ref{lem:stoc-continuity}~ for
            $\psi= \partial_s \phi$ [relative to $(\Omega_\tau, P_2)$] and get
            \begin{align}
              \label{eq:est-1}\lim_{h\downarrow 0} E^{P_2}\frac 1h \int_\tau^{\tau+h} \partial_s \phi(r, X^*(r)) dr
              =\partial_s\phi(\tau, X^*(\tau)),\quad P_1-a.s.
            \end{align}
  Next we treat the term resulting directly from the presence of jumps. Let us denote
   \begin{align*}
    I(r) =& E^{P_2}\Big[\int_{\mathbb{E}}\Big(\phi(r,X^*(r)+\eta^*(r,w))-\phi(r,X^*(r))
         \\&\hspace{3.5cm}-\eta^*(r,w)D\phi(r,X(r))\Big)
         \nu(dw)\Big].
   \end{align*}We use Fubini's theorem to justify the identity 
   \begin{align}\label{eq:fub-identity}
   &\int_{\mathbb{E}}\Big(\phi(r,X^*(r)+\eta^*(r,w))-\phi(r,X^*(r))
         -\eta^*(r,w)D\phi(r,X(r))\Big)
         \nu(dw)\\
  \nonumber = &\int_0^1(1-\rho)\big(\int_{\mathbb{E}}\langle D^2\phi(r, X^*(r)
  +\rho\eta^*(r,w))\eta^*(r,w);\eta^*(r,w)\rangle\nu(dw)\big)d\rho\\
   \nonumber = &\int_0^1(1-\rho)\big(\int_{\mathbb{E}} \text{Tr}\big[\eta^*(r,w)\eta^*(r,w)^TD^2\phi(r, X^*(r)
  +\rho\eta^*(r,w))\big]\nu(dw)\big)d\rho,
   \end{align}where we have also used properties of trace. Therefore

 \begin{align}
   \label{eq:est-2}&I(r)-I(\tau)\\
   \nonumber= &\int_0^1(1-\rho)\Big(\int_{\mathbb{E}} \text{Tr}\big(\big[\eta^*(r,w)\eta^*(r,w)^T-\eta^*(\tau,w)\eta^*(\tau,w)^T\big]\big)\\
   \nonumber&\hspace{5cm}\times D^2\phi(\tau, X^*(\tau)
  +\rho\eta^*(\tau,w))\big)\nu(dw)\Big)d\rho\\
  \nonumber & +\int_0^1(1-\rho)\Big(\int_{\mathbb{E}} \text{Tr}\big(\eta^*(r,w)\eta^*(r,w)^T\big[D^2\phi(r, X^*(r)
  +\rho\eta^*(r,w))\\
 \nonumber &\hspace{5cm}-D^2\phi(\tau, X^*(\tau)
  +\rho\eta^*(\tau,w))\big]\big)\nu(dw)\Big)d\rho.
   \end{align}

Since $D^2\phi$ is continuous and $\rho\eta^*(r,w)$ satisfies \eqref{lebesgue_point} at $\tau$,
    upon denoting $$I^\prime(r,\tau)=\int_{\mathbb{E}} |D^2\phi(r, X^*(r)
  +\rho\eta^*(r,w))-D^2\phi(\tau,X^*(\tau)
  +\rho\eta^*(\tau,w))|\hat{\nu}(dw)$$ where $\hat{\nu}(dw)= \min(1,|w|^2)\nu(dw)$,  by Lemma \ref{lem:lpt}, we must have
   \begin{align}
     \label{eq:est-3}0=& \lim_{h\downarrow 0}E\frac 1h\int_\tau^{\tau+ h} I^\prime(r,\tau) dr\\
  \nonumber    =&\lim_{h\downarrow 0}E\Big[E\big[\frac 1h\int_\tau^{\tau+ h}I^\prime(r,\tau) dr\big|\mathcal{F}_{t,\tau}\big]\Big]\\
       \nonumber=& \lim_{h\downarrow 0}E^{P_1}\Big[E^{P_2}\big[\frac 1h\int_\tau^{\tau+ h}I^\prime(r,\tau) dr\big]\Big],
   \end{align} where we have also used Lemma \ref{markovprop}. Also note that $z_3(\cdot,w)= \eta^*{\eta^*}^T(\cdot,w)$ satisfies \eqref{lebesgue_point-3} at $\tau$ and hence, 
    after a similar reasoning as above, we have
   \begin{align}
     \label{eq:est-4}0=\lim_{h\downarrow}E^{P_1}\Big[E^{P_2}\frac 1h\int_\tau^{\tau+ h}
     \int_{\mathbb{E}}\Big|\text{Tr}\big[{\eta^*}{\eta^*}^T(r,w)-\eta^*{\eta^*}^T(\tau,w)\big]\Big |\nu(dw)dr\Big].
   \end{align} 

  We now recall the growth properties of $\eta$, $\phi$ and use them along with Fubini's theorem to get
   \begin{align}
   \nonumber&\hspace{.5cm}E^{P_2}\frac 1h \int_\tau^{\tau+h} |I(r)-I(\tau)|dr\\
     \label{eq:est-5}&\le N(\tau)\Big[\frac 1h E^{P_2}\int_\tau^{\tau+h}\int_{\mathbb{E}}|\text{Tr}
    \big[\eta^*{\eta^*}^T(r,w)-\eta^*{\eta^*}^T(\tau,w)\big]|\nu(dw)dr\Big]\\
   \nonumber  & \quad+N\int_0^1(1-\rho)E^{P_2}\big[\frac 1h\int_\tau^{\tau+ h}I^\prime(r,\tau)dr\big]d\rho,
     \end{align}where $N$ depends on data and $N(\tau)$ depends also on $X^*(\tau)$ .
      Fix any sequence $\{h_j\}_j$ such that $h_j\downarrow 0$. From \eqref{eq:est-3}\ and \eqref{eq:est-4}, we observe that
      $E^{P_2}\big[\frac 1{h_j}\int_\tau^{\tau+ h_j}I^\prime(r,\tau) dr\big]$ and $E^{P_2}\frac 1{h}_j\int_\tau^{\tau+h_j}\int_{\mathbb{E}}\big(|\text{Tr}
    \big[\eta^*{\eta^*}^T(r,w)-\eta^*{\eta^*}^T(\tau,w)\big]|\nu(dw)dr$ goes to $0$ in $L^1(\Omega_{t,\tau}, P_1)$.
    This means there exists a subsequence $(h_l)_l$ of $(h_j)_j$ such that
 \begin{align*}
     &\lim_{h_l\downarrow 0}E^{P_2}\big[\frac 1{h_l}\int_\tau^{\tau+ h_l}I^\prime(r,\tau)dr\big]= 0,\\
 &\lim_{h_l\downarrow 0} E^{P_2}\frac 1{h}_l\int_\tau^{\tau+h_l}\int_{\mathbb{E}}\big(|\text{Tr}
    \big[\eta^*{\eta^*}^T(r,w)-\eta^*{\eta^*}^T(\tau,w)\big]|\nu(dw)dr=0
    \end{align*} $P_1$-a.s. We use the above equalities along
     with \eqref{eq:est-5} to conclude
     \begin{align}
     \label{eq:main-est-2}\lim_{h_l\downarrow 0 }E^{P_2}\frac 1{h_l} \int_\tau^{\tau+h_l} |I(r)-I(\tau)|dr = 0, \quad P_1-a.s.
    \end{align}
    Next we treat the term resulting from the drift of the dynamics.

  \begin{align}
      \label{second_term}&E^{P_2}\frac 1h\int_{\tau}^{\tau+h}\big|\langle D\phi(r,X^*(r));
      b^*(r)\rangle -\langle D\phi(\tau,X^*(\tau));
      b^*(\tau)\rangle\big | dr\\
    \nonumber =& ||b||_\infty E^{P_2}\frac 1h\int_{\tau}^{\tau+h}| D\phi(r,X^*(r))-D\phi(\tau,X^*(\tau))| dr
     \\\nonumber&\hspace{1cm}+|\langle D\phi(\tau,X^*(\tau))|E^{P_2}\frac 1h\int_{\tau}^{\tau+h}|b^*(r)-
      b^*(\tau)| dr.
      \end{align} We use continuity of $D\phi$ to apply Lemma \ref{lem:stoc-continuity} and get
         \begin{align}
         \label{eq:est-6} \lim_{h\downarrow 0} E^{P_2}\frac 1h\int_{\tau}^{\tau+h}| D\phi(r,X^*(r))-D\phi(\tau,X^*(\tau))| dr=0, \quad P_1-a.s.
         \end{align}
       Also, by the choice of $\tau$, $z_1(\cdot)=b^*(\cdot)$ satisfies \eqref{lebesgue_point} i.e.
       \begin{align*}
        0&=\lim_{h\downarrow 0}E\frac
      1h\int_{\tau}^{\tau+h}|b^*(r)-b^*(\tau)|dr\\
       &=\lim_{h\downarrow 0}E\Big(E\big[\frac
      1h\int_{\tau}^{\tau+h}|b^*(r)-b^*(\tau)|dr\big|\mathcal{F}_{t,\tau}\big]\Big)\\
        &= \lim_{h\downarrow 0}E^{P_1}\Big(E^{P_2}\big[\frac
      1h\int_{\tau}^{\tau+h}|b^*(r)-b^*(\tau)|dr\big]\Big),
       \end{align*} which implies that $E^{P_2}\frac
      1h\int_{\tau}^{\tau+h}|b^*(r)-b^*(\tau)|dr$ converges to $0$ in $L^1((\Omega_{t,\tau}, P_1);\mathbb{R})$.

  Therefore, there exists subsequence $(h_{l^\prime})$ of $(h_l)$ such that
       \begin{align}
           \label{eq:est-7}\lim_{h_{l^\prime}\downarrow 0} E^{P_2}\frac
      1{h_{l^\prime}}\int_{\tau}^{\tau+h_{l^\prime}}|b^*(r)-b^*(\tau)|dr= 0.
       \end{align} We combine \eqref{eq:est-6}-\eqref{eq:est-7} to conclude, $P_1-a.s.$
       \begin{align}
       \label{eq:main-est-3}\lim_{h_{l^\prime}\downarrow 0}E^{P_2}\frac 1{h_{l^\prime}}\int_{\tau}^{\tau+h}\langle D\phi(r,X^*(r));
      b^*(r)\rangle-\langle D\phi(\tau,X^*(\tau));
      b^*(\tau)\rangle dr= 0.
       \end{align} Finally, we consider the term resulting from the diffusions. We routinely use formulas involving
        traces of matrices to write

 \begin{align*}
        &E^{P_2}\Big[ \frac 1h \int_\tau^{\tau +h}\frac 12\text{Tr}\Big(\sigma^*(r){\sigma^*(r)}^TD^2\phi(r,X^*(r))-\sigma^*(\tau)
        {\sigma^*(\tau)}^TD^2\phi(\tau,X^*(\tau))\Big)dr\Big]\\
        =&E^{P_2}\Big[ \frac 1h \int_\tau^{\tau +h}\frac 12\text{Tr}\big(\sigma^*(r){\sigma^*(r)}^T[D^2\phi(r,X^*(r))
        -D^2\phi(\tau,X^*(\tau))]\big)dr\Big]\\
        &+E^{P_2}\Big[ \frac 1h \int_\tau^{\tau +h}\frac 12\text{Tr}\big([\sigma^*(r){\sigma^*(r)}^T
        -\sigma^*(\tau){\sigma^*(\tau)}^T]D^2\phi(r,X^*(r))\big)dr
        \end{align*} and then we invoke the same set of arguments used above to conclude that there exists a subsequence
        $(h_{l^{\prime\prime}})$ of $(h_{l^\prime})$ such that, $P_1-a.s.$,
        \begin{align}
        \label{eq:main-est-4} \lim_{h_{l^{\prime\prime}}\downarrow 0}E^{P_2}\Big[ \frac 1{h_{l^{\prime\prime}}} \int_\tau^{\tau +h_{l^{\prime\prime}}}
        \frac 12\text{Tr}\Big(\sigma^*{\sigma^*}^T(r)D^2\phi(r,X^*(r))-\sigma^*
        {\sigma^*}^T(\tau)D^2\phi(\tau,\cdot)\Big)dr\Big]= 0.
        \end{align}

 Summing up \eqref{eq:est-1},\eqref{eq:main-est-2},\eqref{eq:main-est-3}  and \eqref{eq:main-est-4},
        with \eqref{eq:diff-sup-integral} in mind, for any sequence $(h_j)\downarrow 0$ there exists a subsequence $(h_{l^{\prime\prime}})$ such that, $P_1-a.s.$,
           \begin{align}
            \label{eq:est-8}&\lim_{h_{l^{\prime\prime}}\downarrow 0}E^{P_2}\Big[\frac 1{h_{l^{\prime\prime}}}\int_\tau^{\tau+h}\big[
            \partial_s\phi(r,X^*(r))+\langle D\phi(r,X^*(r)); b^*(r)\rangle
         \\\nonumber&\hspace{3.5cm}+\frac 12
         \text{Tr}\big(\sigma^*(r)D^2\phi(r,X^*(r))\sigma^*(r)\big)\big]
         dr\\
        \nonumber&\hspace{1.5cm}+\int_{\tau}^{\tau+h}\int_{\mathbb{E}}\big(\phi(r,X^*(r)+\eta^*(r,w))-\phi(r,X^*(r))\\
         \nonumber&\hspace{3.8cm}-\eta^*(r,w)D\phi(r,X^*(r))\big)
         \nu(dw)dr
            \Big]\\
          \nonumber  =& \partial_s\phi(\tau,X^*(\tau))+\langle D\phi(\tau,X^*(\tau)); b^*(\tau)\rangle+
         \frac 12
         \text{Tr}\big(\sigma^*(\tau)D^2\phi(\tau,X^*(\tau))\sigma^*(\tau)\big)
         \\
        \nonumber &+\int_{\mathbb{E}}\big(\phi(\tau,X^*(\tau)+\eta^*(\tau,w))-\phi(\tau,X^*(\tau))
        -\eta^*(\tau,w)D\phi(\tau,X^*(\tau))\big)
         \nu(dw).
           \end{align}

  Passing to the limit along a suitable subsequence for which $\limsup$ is achieved in \eqref{eq:ito-p2-average}
            and
            using \eqref{eq:diff-sup}, we have
           \begin{align}
         \nonumber &\limsup_{h\downarrow 0} E^{P_2}\frac 1h\big[u(\tau+h, X^*(\tau+h))-u(\tau,X^*(\tau))\big]\\
          \label{eq:est-final-1}\le& p^1(\tau)+\langle q^1(\tau), b^*(\tau)\rangle+\frac12\text{Tr}(\sigma^*{\sigma^*}^T(\tau)Q^1(\tau))
           \\\nonumber& + \mathcal{J}\big(\tau,X^*(\tau); Y^*(\tau), Z^*(\tau)\big) \Phi^1_\tau(\tau, X^*(\tau))
           \big],\quad P_1-a.s.
           \end{align}
           Note that $ \int_t^TE(||\Phi^i_s||_{C^{1,2}_1(\bar{Q}_T)}^2)ds< \infty$, and therefore $E(||\Phi^i_s||_{C^{1,2}_1(\bar{Q}_T)}^2)<\infty$ for a.e. $s\in[t,T]$ and $i\in\{1,2\}$. Therefore, without loss of generality,  we may assume $E(||\Phi^1_\tau||_{C^{1,2}_1(\bar{Q}_T)}^2)<\infty$ i.e. 
           \begin{align}
           \label{eq:fatou-just} E(||\phi||_{C^{1,2}_1(\bar{Q}_T)}^2)<\infty. 
           \end{align} Now use \eqref{eq:ito-p2-average}, and boundedness of the data to conclude 
           \begin{align}
                & \nonumber E^{P_2} \Big[ \frac{u(\tau+h,X^*(\tau+h))-u(\tau, X^*(\tau))}{h}\Big]\\\nonumber \le& C||\phi||_{C^{1,2}_1(\bar{Q}_T)} \frac{1}{h}\int_\tau^{\tau+h}(1+E^{P_2}|X^*(s)|)ds\\ 
                                                       \nonumber            \le &  C ||\phi||_{C^{1,2}_1(\bar{Q}_T)} \frac{1}{h}\int_\tau^{\tau+h}(1+|X^*(\tau)|+C^\prime \sqrt{\tau-s})ds\\
                                                       \label{eq:faous-2}     \le &          C ||\phi||_{C^{1,2}_1(\bar{Q}_T)}(1+|X^*(\tau)|+C^\prime) := \rho_\tau(\omega),                                       
           \end{align}where $h\le 1$ and we have used Lemma \ref{lem:A1}. Also, by Cauchy-Schwartz inequality and \eqref{eq:fatou-just}, we have $E(|\rho_\tau(\omega)|) < \infty$ as $X^*(\tau)$ has bounded second moments.

 By \eqref{eq:faous-2}, we can now apply Fatou's lemma and use \eqref{eq:est-final-1} to obtain
           \begin{align*}
            &\limsup_{h\downarrow 0} E\frac 1h\big[u(\tau+h, X^*(\tau+h))-u(\tau,X^*(\tau))\big]\\
            =& \limsup_{h\downarrow 0} E\frac 1h\Big[E\big[u(\tau+h, X^*(\tau+h))
            -u(\tau,X^*(\tau))\big]\big|{\mathcal{F}_{t,\tau}}\Big]\\
            =  & \limsup_{h\downarrow 0} E\Big[E^{P_2}\frac 1h\big[u(\tau+h, X^*(\tau+h))
            -u(\tau,X^*(\tau))\big]\Big]\\
            \le &  E\Big[\limsup_{h\downarrow 0}E^{P_2}\frac 1h\big[u(\tau+h, X^*(\tau+h))
            -u(\tau,X^*(\tau))\big]\Big]\\
            \le &E\Big[p^1(\tau)+\langle q^1(\tau), b^*(\tau)\rangle+\frac12\text{Tr}(\sigma^*{\sigma^*}^T(\tau)Q^1(\tau))
           \\\nonumber& \hspace{1cm}+ \mathcal{J}\big(\tau,X^*(\tau); Y^*(\tau), Z^*(\tau)\big) \Phi^1_\tau(\tau, X^*(\tau))
           \Big],
           \end{align*} for almost every $\tau\in [t,T]$.  To this end, we define $G(\tau)=E(\tau,X^*(\tau))$ and observe   
           \begin{align}
            \label{eq:est-final-2} &\limsup_{h\downarrow 0}\frac{G(\tau+h)-G(\tau)}{h} \\\nonumber\le& E\Big[p^1(\tau)+\langle q^1(\tau), b^*(\tau)\rangle+\frac12\text{Tr}(\sigma^*{\sigma^*}^T(\tau)Q^1(\tau))
           \\\nonumber& \hspace{1cm}+ \mathcal{J}\big(\tau,X^*(\tau); Y^*(\tau), Z^*(\tau)\big)
            \Phi^1_\tau(\tau, X^*(\tau))\Big],
          \end{align} for a.e. $\tau \in[t,T].$  We proceed similarly as in \eqref{eq:faous-2} and argue, for  all $h\ge 0$, that
          
          \begin{align}
            \nonumber  \frac{G(\tau+h)-G(\tau)}{h}&\le  E \Big[ \frac{\Phi_\tau^1(\tau+h,X^*(\tau+h))-\Phi_\tau^1(\tau, X^*(\tau))}{h}\Big]\\
            \nonumber  & \le C \frac{1}{h}\int_\tau^{\tau+h}E\big[  ||\Phi^1_\tau||_{C^{1,2}_1(\bar{Q}_T)}(1+|X^*(s)|)\big]ds\\
          \nonumber    & \le  C\Big [   \frac{1}{h}\int_\tau^{\tau+h}E\big( ||\Phi^1_\tau||_{C^{1,2}_1(\bar{Q}_T)}^2\big)ds  \Big]^{\frac{1}{2}}\Big [   \frac{1}{h}\int_\tau^{\tau+h}E\big(1+|X^*(s)|^2\big)   \Big]^{\frac{1}{2}}\\
           \label{faous-3}   & \le C E\big( ||\Phi^1_\tau||_{C^{1,2}_1(\bar{Q}_T)}^2\big)^{\frac{1}{2}} := \rho^\prime(\tau),
          \end{align} where we have used It\^{o}-L\`{e}vy formula,  Cauchy-Schwartz inequality, boundedness of the data and boundedness of second moment of $X^*(\cdot)$. By our assumptions, the map $\tau \mapsto E\big( ||\Phi^1_\tau||_{C^{1,2}(\bar{Q}_T)}^2\big)$ is Borel measurable and a member of $L^1([0,T])$. Therefore the map $\rho^\prime(\tau)$ is in $L^2([0,T])$. We now apply Cauchy-Schwartz inequality once more and conclude  $\rho^\prime(\tau)\in L^1([0,T])$.
          
           We now refer to the explanation in \cite{Gozzi:2010bd} and conclude, in view of \eqref{faous-3}, that the Lemma 5.2 of  \cite[section 5.2]{Yong:2005fg} is valid for the function $G(s)$. 
        We apply this lemma  to $G(s)$ and use \eqref{eq:est-final-2}
          along with \eqref{eq:diff-sup-integral} to obtain
         \begin{align*}
            &G(T)-G(t)\le -E\int_t^T f^*(s)ds\\
            \text{i.e.}\quad& E(g(X^*(T)))-u(t,x)\le -E\int_t^T f^*(s)ds\\
             \text{or}\quad& J(t,x; Y^*,Z^*)\le u(t,x).
         \end{align*} Since the Isaacs condition \eqref{eq:isac_cond} holds and the value of the game exists i.e. $V(t,x)=U(t,x)$ ,
          we use Theorem \ref{thm:visc_connection} and comparison principle from Theorem \ref{thm:wellposed}
          to conclude
               \begin{align*}
                  J(t,x; Y^*,Z^*)\le u(t,x)\le V(t,x)=U(t,x),
               \end{align*} which gives half of the requirement for \eqref{eq:finalconc}. The other half also follows
                after we apply similar machinery to
                $v(t,x)$ and the conditions \eqref{eq:sub-diff},\eqref{eq:sub-diff-integral} . 
            
                 \end{proof}
          \subsection*{Remark on optimality of $(Y^*, Z^*)$}  It is perhaps misleading to call the control pair $(Y^*,Z^*)$ in Theorem \ref{thm:verification}  as optimal merely on the basis of the equality \eqref{eq:finalconc}. If for all $Z\in N(t)$, there exist  $(p^Z, q^Z, Q^Z; \Phi^Z)\in L_{\mathcal{F}_{t,\cdot}}^2(t,T;\mathbb{R})\times
        L_{\mathcal{F}_{t,\cdot}}^2(t,T;\mathbb{R}^d)\times L_{\mathcal{F}_{t,\cdot}}^2(t,T;\mathbb{S}^d)\times
        L_{\mathcal{F}_{t,\cdot}}^2(t,T;C_1^{1,2}(Q_T))$ such that $\Phi^Z$  is progressively measurable and  \eqref{eq:diff-sup}, \eqref{eq:diff-sup-integral} hold for  $(Y^*,Z)$ , then the same proof works and one  can conclude $J(t,x;Y,Z^*) \le u(t,x) \le V(t,x) = U(t,x)$.   Also, for every $Y\in M(t)$, if conditions similar to \eqref{eq:sub-diff}-\eqref{eq:sub-diff-integral}  hold for the control pair $(Y, Z^*)$ then one can show that $J(t,x; Y,Z^*) \ge v(t,x) > U(t,x)=V(t,x)$. In such a scenario, the pair $(Y^*, Z^*)$ will be a saddle-point control pair satisfying $J(t,x;Y^*,Z)\le J(t,x; Y^*, Z^*)\le J(t,x;Y,Z^*)$  for all admissible control pair $(Y,Z)$.

\appendix
\section{Proof of some techincal Lemmas}
\begin{proof}[Proof of Lemma  \ref{lem:eqv-visc} ] Fix $(t,x) \in Q_T$ and $(p,q,Q)\in D_{t,x}^{1,2+}v$. Let $\varphi = \varphi[p,q,Q]$ (merely indicates the dependence on p,q and Q) such that
\[[\varphi,\varphi_t,D\varphi, D^2\varphi](t,x)= [v(t,x),p,q,Q]\]
so that, by Lemma \ref{lem:superdiff-charac}, $v-\varphi$ has strict global maximum at $(t,x)$ relative to the set of points $(s,y)$ such that $s\ge t$. Without loss of generality we may assume  $t=0$. Now it is easy to see that, for small $\mu$, the function $v-\varphi-\frac \mu s$  will have strict global maximum at $(t_\mu,x_\mu)$ and $(t_\mu, x_\mu)\rightarrow (0,x)$ as $\mu\rightarrow 0$. Therefore, from Definition \ref{defi:visc}, it follows that 
\[-\varphi_t(t_\mu,x_\mu)-F(t_\mu,x_\mu,D\varphi(t_\mu,x_\mu), D^2\varphi(t_\mu,x_\mu),\varphi(t_\mu,\cdot))\le -\frac{\mu}{t_\mu^2}\]
for $F= H^+$ or $H^-$. We get the desired result by passing to the limit $\mu\downarrow 0$ and invoking continuity of $H^+$ or $H^-$. This gives the `if' part, the proof of  `only if' trivially follows from Lemma \ref{lem:superdiff-charac}.
\end{proof}

\begin{proof}[Proof of Lemma \ref{lem:lip_holder}]
   The Lipschitz continuity and boundedness of $V$ and $U$ are straight forward consequences of uniform Lipschitz continuity and uniform boundedness of $J(t,x;Y,Z)$ in $x$. As a result, it is enough to prove [$a.)$], in which case the uniform boundedness is evident from the boundedness of the data. The Lipschitz continuity will follow if we can prove, for $x,y\in \mathbb{R}^d$,
   \begin{align}
   \label{eq:appendix-1} E^{P_t}\big(|X_{t,x}-X_{t,y}|\big) \le C|x-y|,
   \end{align} where $X_{t,x}$ and $X_{t,y}$ are solutions of \eqref{eq:SDG_dymcs} starting at $t$, respectively from the points $x$ and $y$ with the same control pair $\gamma = (Y,Z)$. We proceed as follows:

   Set $Z(s)= X_{t,x}(s)-X_{t,y}(s) $.  By applying It\^{o}-L\`{e}vy formula , we have 
   
   \begin{align*}
    E^{P_t} \big(|Z(s)|^2\big) =&|x-y|^2\\
       &\quad + E^{P_t}\Big\{\int_s^t\big[2Z(r).\bar{b}(r, X_{t,x}(r), X_{t,y}(r); \gamma(r))\\
       &\quad +\text{Tr}(\bar{\sigma}\bar{\sigma}^T)(r, X_{t,x}(r), X_{t,y}(r);\gamma(r))\\
       & \quad \int_{\mathbb{E}}|\bar{\eta}(r, X_{t,x}(r), X_{t,y}(r),w;\gamma(r))|^2\nu(dw)\big]dr\Big\},
   \end{align*} where 
   \begin{align*}
   \bar{b}(r, X_{t,x}(r), X_{t,y}(r)); \gamma(r)) &= b(r, X_{t,x}(r); \gamma(r))- b(r, X_{t,y}(r); \gamma(r)),\\
    \bar{\sigma}(r, X_{t,x}(r), X_{t,y}(r)); \gamma(r)) &= \sigma(r, X_{t,x}(r); \gamma(r))- \sigma(r, X_{t,y}(r); \gamma(r)),\\
       \bar{\eta}(r, X_{t,x}(r), X_{t,y}(r)), w; \gamma(r)) &= \eta(r, X_{t,x}(r); \gamma(r); w)- \eta(r, X_{t,y}(r); \gamma(r);w).
   \end{align*} We invoke the Lipschtiz continuity assumption on the data and obtain
   \begin{align*}
     E^{P_t} \big(|Z(s)|^2\big) \le &|x-y|^2+C \int_t^s E^{P_t}(|Z(r)|^2)dr,
   \end{align*}where we have used Fubini's theorem. Now we apply Gronwall's inequality to deduce
   \begin{align*}
     E^{P_t} \big(|Z(s)|^2\big)\le (1+\int_t^se^{Cr}dr)|x-y|^2.
   \end{align*} In other words, we have just derived 
   \begin{align*}
    E^{P_t}\big( |X_{t,x}(s)-X_{t,y}(s)|\big) \le C|x-y|^2,
   \end{align*} which implies \eqref{eq:appendix-1} after applying Cauchy-Schwartz inequality. 
   \end{proof}

   We close this Appendix with the following known estimate for stochastic differential equations.
   
  \begin{lem} \label{lem:A1}
  Let $X(\cdot)$ be the solution of \eqref{eq:SDG_dymcs} starting from a point $x$ at time $t$, corresponding to an admissible control pair $(Y,Z)$.  Then, it holds that 
  \begin{align}
  \label{A3.1}E^{t,x}\big(|X(\tau)-x|\big)\le C\sqrt{\tau-t},
  \end{align}where $C$ is  a constant depending on the data.
  \end{lem}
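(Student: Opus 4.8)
The plan is to bound the second moment $E^{t,x}|X(\tau)-x|^2$ directly and then descend to the first moment by Jensen's inequality. The crucial simplification, and the reason this is a short argument rather than a Gronwall-type one, is that by \ref{A2} the coefficients $b$, $\sigma$ and $\eta$ are \emph{uniformly bounded}; hence the integrands can be controlled independently of $X(\cdot)$, and no self-referential estimate is needed.

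First I would write the increment using the integral form of \eqref{eq:SDG_dymcs} with $\gamma=(Y,Z)$,
\[
X(\tau)-x = \int_t^\tau b(r,X(r);\gamma(r))\,dr + \int_t^\tau \sigma(r,X(r);\gamma(r))\,dW(r) + \int_t^\tau\!\!\int_{\mathbb{E}} \eta(r,X(r^-);\gamma(r);w)\,\tilde{N}(dr,dw),
\]
and apply the elementary inequality $|a+b+c|^2\le 3(|a|^2+|b|^2+|c|^2)$ followed by $E^{t,x}$, splitting the problem into a drift, a diffusion, and a jump contribution.

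Next I would estimate the three pieces separately. For the drift, Cauchy--Schwartz in the time variable together with $|b|\le K$ gives $E^{t,x}\big|\int_t^\tau b\,dr\big|^2\le K^2(\tau-t)^2$. For the Brownian term, the It\^o isometry and $|\sigma|\le K$ give $E^{t,x}\big|\int_t^\tau \sigma\,dW\big|^2 = E^{t,x}\int_t^\tau|\sigma|^2\,dr\le K^2(\tau-t)$. For the compensated Poisson term, the It\^o--L\'evy isometry yields $E^{t,x}\big|\int_t^\tau\!\int_{\mathbb{E}}\eta\,\tilde N\big|^2 = E^{t,x}\int_t^\tau\!\int_{\mathbb{E}}|\eta|^2\,\nu(dw)\,dr$; here the bound $|\eta|\le K\min(|w|,1)$ from \ref{A2} and the integrability $\int_{\mathbb{E}}\min(|w|^2,1)\,\nu(dw)<\infty$ from \ref{A3} produce a finite constant $C$ with $\int_{\mathbb{E}}|\eta|^2\,\nu(dw)\le C$, so this term is $\le C(\tau-t)$.

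Combining the three bounds and using $\tau-t\le T$ to absorb the quadratic drift contribution $K^2(\tau-t)^2$ into the linear term, I would obtain $E^{t,x}|X(\tau)-x|^2\le C(\tau-t)$; Jensen's inequality $E^{t,x}|X(\tau)-x|\le\big(E^{t,x}|X(\tau)-x|^2\big)^{1/2}$ then gives \eqref{A3.1}. The only step demanding genuine care is the jump term, where one must invoke \ref{A3} to keep the L\'evy-measure integral finite; every other estimate is routine given the boundedness assumptions.
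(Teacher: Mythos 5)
Your proposal is correct and follows essentially the same route as the paper: decompose the increment into drift, Brownian, and compensated-Poisson parts, bound each via boundedness of the data, the It\^{o}/It\^{o}--L\'{e}vy isometries and assumption (\textbf{A}.3), then pass from the second moment to the first by Jensen/Cauchy--Schwartz. The only cosmetic difference is that you absorb the quadratic drift term $K^2(\tau-t)^2$ using $\tau-t\le T$, whereas the paper assumes $|\tau-t|\le 1$ without loss of generality; both yield the same conclusion with a constant depending on the data.
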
 
  
 \begin{proof}
    Without loss of generality we may assume  $|\tau-t|\le 1$. We have 
    \begin{align*}
    X(\tau) -x =& \int_t^\tau b(s, X(s); Y(s), Z(s))ds+\int_t^\tau \sigma(s,X(s);Y(s),
Z(s))dW(s)\\&\qquad+\int_t^\tau\int_{E}\eta(s, X(s^-);Y(s),
Z(s);w)\tilde{N}(ds,dw).
 \end{align*} Therefore, by It\^{o}-L\`{e}vy isometry, we conclude 
 \begin{align*}
 &E| X(\tau) -x|^2\\ \le& 3E\big [  \int_t^\tau b(s, X(s); Y(s), Z(s))ds\big]^2 +3\int_t^\tau E| \sigma(s,X(s);Y(s),Z(s))|^2ds
Z(s))ds\\
&+3\int_t^\tau\int_{E}E|\eta(s, X(s);Y(s),
Z(s);w)|^2\nu(dw)ds\\
\le &K(|\tau-t|+|\tau-t|^2),
 \end{align*} where we have used the bounded ness assumption on the data. Sine $|\tau-t|\le 1$, \eqref{A3.1} follows trivially from above estimate after applying Cauchy-Schwartz inequality.

  \end{proof}

\end{document}